\documentclass[a4paper, 11pt, leqno]{article}

% Packages
\usepackage{amsmath}
\usepackage{amssymb}
\usepackage{amsthm}
\usepackage{authblk}
\usepackage{graphicx}
\usepackage{psfrag}

% Page layout
\usepackage[paper=a4paper,dvips,top=3.0cm,left=3.0cm,right=3.0cm,foot=1cm,bottom=3.0cm]{geometry}

% Captions
\usepackage[font=small,labelfont=bf]{caption}

% Hyperref setup
\usepackage{hyperref}
\hypersetup{colorlinks,citecolor=blue,filecolor=black,linkcolor=blue,urlcolor=black}

% Mathematical notation
\newcommand{\abs}[1]{\lvert {#1} \rvert}
\newcommand{\bigabs}[1]{\big\lvert {#1} \big\rvert}
\newcommand{\meas}[2][]{\lvert {#2} \rvert_{#1}}
\newcommand{\closure}[1]{ \, \overline{\!{#1}\!} \, }
\newcommand{\norm}[1]{\Vert {#1} \Vert}
\newcommand{\bignorm}[1]{\big\Vert {#1} \big\Vert}
\newcommand{\seminorm}[1]{\vert {#1} \vert}
\newcommand{\dgnorm}[1]{\vert\hspace{-1pt}\vert\hspace{-1pt}\vert {#1} \vert\hspace{-1pt}\vert\hspace{-1pt}\vert}
\newcommand{\avg}[1]{\{ \hspace{-3.5pt} \{ {#1} \} \hspace{-3.5pt} \}}
\newcommand{\jump}[1]{[ \! [ {#1} ] \! ]}
\def\ud{\, \mathrm{d}}
\def\card{\mathrm{card}}
\def\diam{\mathrm{diam}}
\def\rmD{\mathrm{D}}
\def\rmN{\mathrm{N}}
\def\rmT{\mathrm{T}}
\def\ba{\mathbf{a}}
\def\bn{\mathbf{n}}
\def\bq{\mathbf{q}}
\def\br{\mathbf{r}}
\def\bv{\mathbf{v}}
\def\bw{\mathbf{w}}
\def\bx{\mathbf{x}}
\def\bA{\mathbf{A}}
\def\mF{\mathcal{F}}

\def\mT{\mathcal{T}}

% Theorem environments
\newtheorem{theorem}{Theorem}[section]
\newtheorem{lemma}[theorem]{Lemma}
\newtheorem{corollary}[theorem]{Corollary}
\newtheorem{assumption}[theorem]{Assumption}
\newtheorem{remark}[theorem]{Remark}

%==============================================================================
\begin{document}
%==============================================================================

% Title
\title{An interior penalty discontinuous {G}alerkin method for a class of monotone quasilinear elliptic problems}

% Author
\author{Peter W. Fick\footnote{Email: p.w.fick@tudelft.nl}}
\affil{\normalsize Faculty of Aerospace Engineering, Delft University of Technology, Delft The Netherlands}

% Date
\date{\normalsize January 1, 2014}

\maketitle

% Abstract
\begin{abstract}
A family of interior penalty $hp$-discontinuous Galerkin methods is developed and analyzed for the numerical solution of the quasilinear elliptic equation $-\nabla{} \cdot (\bA(\nabla{u}) \nabla{u} = f$ posed on the open bounded domain $\Omega \subset \mathbb{R}^d$, $d \geq 2$. Subject to the assumption that the map $\bv \mapsto \bA(\bv) \bv$, $\bv \in \mathbb{R}^d$, is Lipschitz continuous and strongly monotone, it is proved that the proposed method is well-posed. \emph{A priori} error estimates are presented of the error in the broken $H^1(\Omega)$-norm, exhibiting precisely the same $h$-optimal and mildly $p$-suboptimal convergence rates as obtained for the interior penalty approximation of linear elliptic problems. \emph{A priori} estimates for linear functionals of the error and the $L^2(\Omega)$-norm of the error are also established and shown to be $h$-optimal for a particular member of the proposed family of methods. The analysis is completed under fairly weak conditions on the approximation space, allowing for non-affine and curved elements with multilevel hanging nodes. The theoretical results are verified by numerical experiments. \\

\textbf{Keywords.} $hp$-discontinuous Galerkin methods; interior penalty methods; second-order quasilinear elliptic problems.
\end{abstract}

%------------------------------------------------------------------------------

\section{Introduction} \label{section:introduction}

Over the past two decades, discontinuous Galerkin (DG) finite element methods have emerged as an effective and popular choice for the numerical solution of a wide range of partial differential equations. This is mainly stimulated by their high degree of locality, their extreme flexibility with respect to $hp$-adaptive mesh refinement, and their natural ability to accommodate high-order discretizations for hyperbolic problems in a locally conservative manner without excessive numerical stabilization. As it stands, there exists a vast amount of literature on the \emph{a~priori} error analysis of DG methods for linear problems; we refer to the recent book of Di~Pietro \& Ern~\cite{PietroErn2011} for a comprehensive overview of the most prominent results. For nonlinear problems, however, there are still relatively few results available; we mention the works of Houston \emph{et al.}~\cite{HoustonEtAl2005}, Ortner \& S\"{u}li~\cite{OrtnerSuli2007}, Gudi \& Pani~\cite{GudiPani2007}, Gudi \emph{et al.}~\cite{GudiEtAl2008a, GudiEtAl2008b}, \cite{DolejsiEtAl2005}, Dolej\v{s}\'{\i}~\cite{Dolejsi2008}, Bustinza \& Gatica~\cite{BustinzaGatica2004}, and Bi \& Lin~\cite{BiLin2012}. It is fair to say that the extension of DG methods from linear to nonlinear problems is non-obvious in many cases, particularly with respect to the proper formulation of the element boundary terms, and that the analysis turns out to be more challenging.

In this article, we present and analyze a family of interior penalty DG methods for the numerical solution of the following class of quasilinear elliptic boundary value problems. Let~$\Omega$ be an open bounded domain in~$\mathbb{R}^d$, $d \geq 2$, with Lipschitz boundary $\partial \Omega = \Gamma_\rmD \cup \Gamma_\rmN$, where $\Gamma_\rmD \neq \emptyset$ and $\Gamma_\rmN = \partial \Omega \setminus \Gamma_\rmD$. Denoting by~$\bn \colon \Gamma_\rmN \to \mathbb{R}^d$ the unit outward normal to~$\Gamma_\rmN$, our model problem of interest is stated as follows: find $u \colon \closure{\Omega} \to \mathbb{R}$ such that
\begin{subequations} \label{eq:model_problem}
\begin{alignat}{2}
- \nabla \cdot \left( \bA(\bx, \nabla{u}) \nabla{u} \right) = \ & f & \quad & \text{in} \ \Omega , \label{eq:model_problem_pde}
\\
u = \ & g_\rmD & \qquad & \text{on} \ \Gamma_\rmD , \label{eq:model_problem_bcD}
\\
\bA(\bx, \nabla{u}) \nabla{u} \cdot \bn = \ & g_\rmN & \quad & \text{on} \ \Gamma_\rmN , \label{eq:model_problem_bcN}
\end{alignat} 
\end{subequations}
where $\bA \in [L^\infty(\closure{\Omega} \times \mathbb{R}^d)]^{d, d}$, $f \in L^2(\Omega)$, $g_\rmD \in H^{1/2}(\Gamma_\rmD)$ and $g_\rmN \in L^2(\Gamma_\rmN)$. In what follows, we assume that, for~$\bx \in \closure{\Omega}$ and~$\bv \in \mathbb{R}^d$, the nonlinear map $\bv \mapsto \bA(\bx, \bv) \bv$ is \emph{Lipschitz continuous} and \emph{strongly monotone}, as phrased by the following statement.
\begin{assumption} \label{asm:A}
There exist constants $C_\bA \geq M_\bA > 0$ such that, for all~$\bx \in \closure{\Omega}$ and all~$\bv_1, \bv_2 \in \mathbb{R}^d$,
\begin{gather}
\abs{ \bA(\bx, \bv_1) \bv_1 - \bA(\bx, \bv_2) \bv_2} \leq C_\bA \, \abs{\bv_1 - \bv_2} , \label{eq:asm_A1}
\\[3pt]
( \bA(\bx, \bv_1) \bv_1 - \bA(\bx, \bv_2) \bv_2 ) \cdot (\bv_1 - \bv_2) \geq M_\bA \, \abs{\bv_1 - \bv_2}^2 . \label{eq:asm_A2}
\end{gather}
\end{assumption}
Subject to the above assumpion, one can show that problem~\eqref{eq:model_problem} admits a unique weak solution~$u \in H^1(\Omega)$. In passing, we note that problems of the type~\eqref{eq:model_problem} satisfying Assumption~\ref{asm:A} arise in several applications. A classic example is mean curvature flow, for which $\bA(\bx, \nabla{u}) = (1 + \abs{\nabla{u}}^2 )^{-1/2} \, \mathbf{I}$ with~$\mathbf{I}$ the $d \times d$~identity matrix; this has applications in image processing and interface modeling in two-fluid flows, among others. Another example is the modeling of non-Newtonian fluids. For the sake of notational simplicity, we henceforth suppress the dependence of $\bA(\bx, \bv)$ on~$\bx$ and simply write $\bA(\bv)$ instead. 

The development of DG methods for problems of the type~\eqref{eq:model_problem} has also been pursued by several other researchers. In \cite{BustinzaGatica2004}, an $h$-version local DG method is developed and analyzed exhibiting optimal error estimates in the broken $H^1(\Omega)$-norm and $L^2(\Omega)$-norm. The development and analysis of $hp$-version interior penalty DG methods is initiated by Houston \emph{et al.}~\cite{HoustonEtAl2005}. Quasi-optimal error estimates are presented for the error in the broken $H^1(\Omega)$-norm, which are optimal in the mesh size $h$ and mildly supoptimal in the polynomial degree $p$, by half an order in $p$. Estimates for the error in the $L^2(\Omega)$-norm are not presented, but numerical experiments reveal the convergence in the $L^2(\Omega)$-norm to be suboptimal. This suboptimality is caused by so-called dual inconsistency of the method due to a particular formulation of the element boundary terms. Difficulties with respect to the proper formulation of the element boundary terms have motivated other researchers to consider the development of \emph{incomplete} interior penalty DG methods; cf. \cite{OrtnerSuli2007, Dolejsi2008, BiLin2012}. In \cite{GudiEtAl2008b}, a family of interior penalty DG methods is presented and analyzed with a particular choice of the element boundary terms, for which quasi-optimal $hp$-error estimates are derived in both the broken $H^1(\Omega)$-norm and $L^2(\Omega)$-norm.

The purpose of this article is to present and analyze a new family of interior penalty $hp$-DG methods for the numerical solution of~\eqref{eq:model_problem} with quasi-optimal $hp$-error estimates in both the broken $H^1(\Omega)$-norm and $L^2(\Omega)$-norm. As in \cite{HoustonEtAl2005} and \cite{GudiEtAl2008b}, our family of methods depends on the parameter $\theta \in [-1, 1]$. In the linear setting of $\bA(\cdot) = \mathbf{I}$ with $\mathbf{I}$ the $d \times d$ identity matrix and for particular choices of $\theta$, the proposed DG formulation reduces to various well-known interior penalty methods; notable examples include the symmetric and nonsymmetric interior penalty methods of, respectively, Arnold~\cite{Arnold1982} and Rivi\`{e}re \emph{et al.}~\cite{RiviereEtAl1999}. Subject to Assumption~\ref{asm:A}, we prove that the proposed DG formulation is well-posed provided the discontinuity penalization parameter is chosen sufficiently large. Moreover, \emph{a priori} error estimates are presented for the error in the broken $H^1(\Omega)$-norm, displaying precisely the same $h$-optimal and $p$-suboptimal convergence rates as obtained for the interior penalty approximation of linear elliptic problems; cf. \cite{HoustonEtAl2002}. \emph{A priori} estimates for linear functionals of the error and the error in the $L^2(\Omega)$-norm are also derived and shown to be $h$-optimal when $\theta = -1$. The analysis is completed under fairly weak conditions on the $hp$-finite element space allowing for non-affine and curved elements with multilevel hanging nodes and non-uniform polynomial degree.

The remainder of this article is organized as follows. Section~\ref{section:preliminaries} establishes notation, definitions and some auxiliary results. In Section~\ref{section:dgfem}, we introduce the interior penalty $hp$-DG approximation of \eqref{eq:model_problem} and prove several fundamental properties including a well-posedness result. Section~\ref{section:error_analysis} is concerned with the error analysis. Finally, in Section~\ref{section:numerical_experiments} some numerical experiments are presented to illustrate the theoretical results. The appendix is devoted to some auxiliary results regarding the well-posedness of nonlinear variational problems.

%------------------------------------------------------------------------------

\section{Preliminaries} \label{section:preliminaries}

For~$h > 0$, let~$\mT_h$ be a subdivision of~$\Omega$ into disjoint open element domains~$K$ such that $\closure{\Omega} = \cup_{K \in \mT_h} \closure{K}$. Here, $h = \max_{K \in \mT_h} h_K$, where $h_K = \diam(K)$. Each~$K \in \mT_h$ is the image of a fixed reference domain~$\hat{K}$ under a bijective mapping~$T_K \colon \hat{K} \to K$ (that is, $K = T_K(\hat{K})$ for all~$K \in \mT_h$), where~$\hat{K}$ is either the open unit simplex or the open unit hypercube in~$\mathbb{R}^d$. For~$K \in \mT_h$, we denote by~$\bn_K$ the unit outward normal with respect to~$\partial K$. Furthermore, for any pair of neighboring elements~$K, K' \in \mT_h$, we refer to the nonempty $(d-1)$-dimensional interior of~$\partial K \cap \partial K'$ as an interior face of~$\mT_h$. Likewise, for any~$K \in \mT_h$, a boundary face lying on~$\Gamma_\rmD$ (resp.~$\Gamma_\rmN$) is the nonempty $(d-1)$-dimensional interior of~$\partial K \cap \Gamma_\rmD$ (resp.~$\partial K \cap \Gamma_\rmN$). The interior faces and the boundary faces lying on~$\Gamma_\rmD$ and~$\Gamma_\rmN$ are collected in the sets~$\mF_{h, 0}$, $\mF_{h, \rmD}$ and~$\mF_{h, \rmN}$, respectively, and we define $\mF_h := \mF_{h, 0} \cup \mF_{h, \rmD} \cup \mF_{h, \rmN}$. In addition, we let $\mF_{h, 0, \rmD} := \mF_{h, 0} \cup \mF_{h, \rmD}$, and, for each~$K \in \mT_h$, we denote by~$\mF_{h, K}$ the set of faces lying on~$\partial K$; i.e., $\mF_{h, K} := \{ F \in \mF_h \, : \, F \subset \partial K \}$. The union of all interior faces is denoted by~$\Gamma_{h, 0}$ (i.e., $\Gamma_{h, 0} := \cup_{F \in \mF_{h, 0}} F$), and analogously we let~$\Gamma_{h, \rmD}$ and~$\Gamma_{h, \rmN}$ represent the union of faces lying on~$\Gamma_\rmD$ and~$\Gamma_\rmN$. We also define $\Gamma_{h, 0, \rmD} := \Gamma_{h, 0} \cup \Gamma_{h, \rmD}$. 

To characterize functions on~$\mT_h$ that are possibly discontinuous across inter-element boundaries, we introduce the broken Sobolev space
\begin{equation*}
H^s(\Omega, \mT_h) := \{ v \in L^2(\Omega) : \left. v \right|_K \in H^s(K) , \ \forall K \in \mT_h \} ,
\end{equation*}
where $0 < s \leq \infty$. Here, $H^s(K)$ denotes the standard Sobolev-Slobodeckij space of order~$s$ for the domain~$K \in \mT_h$. The space~$H^s(\Omega, \mT_h)$ is equipped with the broken norm and semi-norm
\begin{equation*}
\norm{v}_{H^s(\Omega, \mT_h)} := \left( \sum_{K \in \mT_h} \norm{v}_{H^s(K)}^2 \right)^{1/2} , 
\quad 
\seminorm{v}_{H^s(\Omega, \mT_h)} := \left( \sum_{K \in \mT_h} \seminorm{v}_{H^s(K)}^2 \right)^{1/2} ,
\end{equation*}
where $\norm{\cdot}_{H^s(K)}$ and~$\seminorm{\cdot}_{H^s(K)}$ denote the standard Sobolev-Slobodeckij norm and semi-norm, respectively.

Next, we define jump and average operators for scalar- and vector-valued functions. Let~$K, K' \in \mT_h$ be two adjacent element domains sharing an interior face~$F \in \mathcal{F}_{h, 0}$. Given a scalar-valued function~$v \in H^1(\Omega, \mT_h)$, we define the jump and average of~$v$ at~$F$ by
\begin{equation*}
\left. \jump{v} \right|_F := \left. v \right|_{K} \bn_{K} + \left. v \right|_{K'} \bn_{K'} , 
\qquad
\left. \avg{v} \right|_F := (\left. v \right|_{K} + \left. v \right|_{K'}) / 2 . 
\end{equation*}
Analogously, for a vector-valued function~$\bq \in [H^1(\Omega, \mT_h)]^d$, we set
\begin{equation*}
\left. \jump{\bq} \right|_F := \left. \bq \right|_{K} \cdot \bn_{K} + \left. \bq \right|_{K'} \cdot \bn_{K'} , 
\qquad
\left. \avg{\bq} \right|_F := (\left. \bq \right|_{K} + \left. \bq \right|_{K'}) / 2 .
\end{equation*}
If~$F \in \mF_{h, \rmD}$ or~$F \in \mF_{h, \rmN}$, we moreover define $\left. \jump{v} \right|_F := \left. v \right|_K \bn_K$, $\left. \avg{v} \right|_F := \left. v \right|_K$ and~$\left. \avg{\bq} \right|_F := \left. \bq \right|_K$, where~$K \in \mT_h$ such that~$F \subset \partial K$; the quantity~$\left. \jump{\bq} \right|_F$ is not required for~$F \in \mF_{h, \rmD} \cup \mF_{h, \rmN}$ and is thus left undefined.

Given a nonnegative integer~$k$, let~$\hat{P}_k(\hat{K})$ denote the space of polynomials of total degree up to~$k$ with support on the reference domain~$\hat{K}$. Also, let~$\hat{Q}_k(\hat{K})$ denote the space of tensor-product polynomials of degree up to~$k$ in each coordinate direction of~$\hat{K}$. We define~$\hat{S}_k(\hat{K}) = \hat{P}_k(\hat{K})$ when~$\hat{K}$ is the unit $d$-simplex, and~$\hat{S}_k(\hat{K}) = \hat{Q}_k(\hat{K})$ when~$\hat{K}$ is the unit $d$-hypercube. In addition, let~$S_k(K) = \{ v : v \, \circ \, T_K \in \hat{S}_k(\hat{K}) \}$. Then, assigning to each~$K \in \mT_h$ an integer~$p_K \geq 1$ to represent the local polynomial degree, we introduce the $hp$-finite element space
\begin{equation*}
V_{h, p} = \{ v \in H^1(\Omega, \mT_h) : \left. v \right|_K \in S_{p_K}(K) , \ \forall K \in \mT_h \} ,
\end{equation*}
where~$p = \min_{K \in \mT_h} p_K$. 

In the analysis that follows, we make some structural assumptions on the subdivision~$\mT_h$ and the distribution of the local polynomial degrees~$\{ p_K \}_{K \in \mT_h}$.
\begin{assumption} \label{asm:B} \
\begin{enumerate}
\item[(i)] For each~$K \in \mT_h$ and some integer~$r_K \geq 2$, the map~$T_K \colon \hat{K} \to K$ is a $C^{r_K}$-diffeomorphism satisfying $\seminorm{T_K}_{[W_\infty^{s}(\hat{K})]^{d, d}} \leq \beta_1 \, h_K^s$ and $\seminorm{T_K^{-1}}_{[W_\infty^s(K)]^{d, d}} \leq \beta_1 \, h_K^{-s}$ for~$s \in [0, r_K]$, with constant~$\beta_1$ independent of~$h_K$.
\item[(ii)] The subdivision~$\mT_h$ is \emph{uniformly graded}; i.e., there exists a constant $\beta_2 > 0$ such that, for all pairs of neighboring elements~$K, K' \in \mT_h$ sharing a face~$F \in \mF_{h, 0}$, there holds $\beta_2^{-1} \leq h_K / h_{K'} \leq \beta_2$.
\item[(iii)] The polynomial degrees~$\{ p_K \}_{K \in \mT_h}$ have \emph{bounded local variation}; i.e., there exists a constant~$\beta_3 > 0$ such that, for all pairs of neighboring elements~$K, K' \in \mT_h$ sharing a face~$F \in \mF_{h, 0}$, there holds $\beta_3^{-1} \leq p_K / p_{K'} \leq \beta_3$.
\end{enumerate}
\end{assumption}
Note that we allow for fairly general subdivisions composed of possibly non-affine and curved elements with multilevel hanging nodes. The only requirement is that each~$K \in \mT_h$ is nondegenerate and sufficiently ``close`` to some affine image of the reference domain~$\hat{K}$ (cf. Assumption~\ref{asm:B}(i); see also, for example, \cite{CiarletRaviart1972}), and that the number of hanging nodes per element face is bounded for all~$K \in \mT_h$ (cf. Assumption~\ref{asm:B}(ii)). We remark that, if~$\mT_h$ is composed of affine images of simplices and/or multilinear images of hypercubes, then Assumption~\ref{asm:B}(i) reduces to a standard shape regularity condition. 

We end this section with some auxiliary results that are needed for the subsequent analysis. Here, and in the sequel, we denote by~$C$ and~$C_i$~($i = 1, 2, \dots$) generic constants, possibly different on each occurrence, which are independent of~$h$ and~$p$. In addtion, we write~$C \equiv C( \lambda_1 , \dots, \lambda_N)$ to indicate the dependence of the constant~$C$ on the parameters~$\lambda_1, \dots, \lambda_N$. We state without proof the following trace inequality; the proof is analogous to that of Lemma~1.49 in~\cite{PietroErn2011}.
\begin{lemma}[Multiplicative trace inequality] \label{lem:trace_inequality}
Let $K \in \mT_h$ and $F \in \mF_{h, K}$. Then, for any $v \in H^{s + 1}(K)$, $0 \leq s \leq r_K - 1$, there exists a constant~$C \equiv C(d, \beta_1)$ such that
\begin{equation} \label{eq:trace_inequality}
\norm{v}_{H^s(F)}^2
\leq 
C \left( h_K^{-1} \norm{v}_{H^s(K)}^2 + \norm{v}_{H^s(K)} \, \norm{v}_{H^{s+1}(K)} \right) .
\end{equation}
\end{lemma}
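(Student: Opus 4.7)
The plan is to first establish the inequality on the reference element $\hat{K}$ and then transport it to an arbitrary $K \in \mT_h$ via the diffeomorphism $T_K$ furnished by Assumption~\ref{asm:B}(i); the derivative bounds on $T_K$ and $T_K^{-1}$ listed there will supply the correct $h_K$-scaling of the two terms on the right-hand side.

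On $\hat{K}$, I would start with the integer case $s = 0$. Given a face $\hat{F} \subset \partial\hat{K}$, pick a smooth vector field $\bw \in [C^\infty(\closure{\hat{K}})]^d$ such that $\bw \cdot \bn_{\hat{K}} \geq \alpha > 0$ on $\hat{F}$ while vanishing on the remaining faces of $\partial\hat{K}$; such a $\bw$ can be written down by hand because $\hat{K}$ is either the unit simplex or the unit hypercube. For $\hat{v} \in H^1(\hat{K})$ the divergence theorem then gives
\begin{equation*}
\alpha \, \norm{\hat{v}}_{L^2(\hat{F})}^2
\;\leq\; \int_{\partial \hat{K}} \hat{v}^2 \, \bw \cdot \bn_{\hat{K}}
\;=\; \int_{\hat{K}} \hat{v}^2 \, (\nabla \cdot \bw) + 2 \int_{\hat{K}} \hat{v} \, (\bw \cdot \nabla \hat{v}) ,
\end{equation*}
and the Cauchy--Schwarz inequality together with the $L^\infty$-bounds on $\bw$ and $\nabla \cdot \bw$ yields the $s=0$ version of~\eqref{eq:trace_inequality} on $\hat{K}$. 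The case of integer $s \geq 1$ then follows by applying the $s = 0$ estimate componentwise to the partial derivatives of $\hat{v}$ of order $\leq s$, and the case of non-integer $s$ follows by real interpolation between consecutive integer endpoints; the geometric-mean structure $\norm{v}_{H^s} \, \norm{v}_{H^{s+1}}$ on the right-hand side survives the $K$-method without loss.

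To transfer to a general $K$, I would set $\hat{v} := v \circ T_K$ and use the scaling equivalences
\begin{equation*}
\norm{\hat{v}}_{H^s(\hat{K})} \;\simeq\; h_K^{s - d/2} \, \norm{v}_{H^s(K)},
\qquad
\norm{\hat{v}}_{L^2(\hat{F})} \;\simeq\; h_K^{-(d-1)/2} \, \norm{v}_{L^2(F)} ,
\end{equation*}
which are consequences of Assumption~\ref{asm:B}(i) with constants depending only on $d$ and $\beta_1$. Collecting the three contributions from the reference inequality and simplifying the resulting powers of $h_K$ produces precisely~\eqref{eq:trace_inequality}. I expect the main obstacle to be this scaling step for \emph{fractional} $s$: the Sobolev--Slobodeckij semi-norm is defined through a double integral with a singular kernel that must be tracked through the change of variables $T_K$, and one has to exploit both the Lipschitz bound on $T_K$ and the inverse bound on $T_K^{-1}$ simultaneously. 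A cleaner workaround is to perform the scaling at the integer endpoints $s \in \{0, 1, \dots, r_K - 1\}$, where the chain rule gives the equivalences immediately, and then interpolate in $s$ at the end. Everything else is a careful but standard bookkeeping exercise, which is why the authors content themselves with pointing to the analogous Lemma~1.49 in~\cite{PietroErn2011}.
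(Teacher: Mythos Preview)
The paper does not actually prove this lemma; it states it without proof and refers the reader to Lemma~1.49 of Di~Pietro \& Ern~\cite{PietroErn2011}, which covers precisely the $s=0$ case via the same vector-field/divergence-theorem argument you wrote down. Your approach for $s=0$ therefore coincides with the cited one, and your extension to integer $s\geq 1$ (apply the $s=0$ estimate to tangential derivatives and sum) and to fractional $s$ (interpolate) is the natural completion the paper leaves implicit.

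One technical correction is needed in your scaling step. The equivalence $\norm{\hat v}_{H^s(\hat K)} \simeq h_K^{s-d/2}\norm{v}_{H^s(K)}$ is false for the \emph{full} Sobolev norm when $s\geq 1$: only the seminorms obey $\seminorm{\hat v}_{H^j(\hat K)} \simeq h_K^{j-d/2}\seminorm{v}_{H^j(K)}$, and the full norm mixes several powers of $h_K$. The cleanest fix is the one you already hint at: perform the scaling only at $s=0$, obtaining $\norm{v}_{L^2(F)}^2 \leq C\big(h_K^{-1}\norm{v}_{L^2(K)}^2 + \norm{v}_{L^2(K)}\seminorm{v}_{H^1(K)}\big)$ directly on $K$, and then apply this to each tangential derivative $\partial^\alpha v$, $|\alpha|\leq s$, summing and using Cauchy--Schwarz in $\alpha$ to recover the product $\norm{v}_{H^s(K)}\norm{v}_{H^{s+1}(K)}$. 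For curved faces the tangential derivatives on $F$ are handled by pulling back to $\hat K$ via $T_K$ and invoking the $C^{r_K}$ bounds of Assumption~\ref{asm:B}(i), which is exactly why the constraint $s\leq r_K-1$ appears. With this adjustment your argument is complete.
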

For future reference, we also state the following $hp$-type inverse estimates; cf.~\cite[Lemma~3]{Quarteroni1984}.
\begin{lemma}[Inverse estimates] \label{lem:inv_estimates}
Let~$K \in \mT_h$ and $F \in \mF_{h, K}$, and denote by $\meas[d]{K}$ and $\meas[d-1]{F}$ the corresponding Hausdorff measures of dimension $d$ and $d-1$, respectively. Then, for any $v \in S_{p_K}(K)$, there exists a constant~$C \equiv C(d, \beta_1)$ such that:
\begin{enumerate}
\item[(i)] for~$0 \leq s \leq r_K - 1$,
\begin{equation}
\norm{v}_{H^{s + 1}(K)} 
\leq 
C \, p_K \, h_K^{-1/2} \, \norm{v}_{H^s(K)} \, ; \label{eq:inv_estimate_1}
\end{equation}
\item[(ii)] for~$0 \leq s \leq r_K$,
\begin{align}
\norm{v}_{W_\infty^s(K)}
\leq \ &
C \, p_K \, \meas[d]{K}^{-1/2} \, \norm{v}_{H^s(K)} \, , \label{eq:inv_estimate_2a}
\\[6pt]
\norm{v}_{W_\infty^s(F)}
\leq \ &
C \, p_K \, \meas[d-1]{F}^{-1/2} \, \norm{v}_{H^s(F)} \, . \label{eq:inv_estimate_2b}
\end{align}
\end{enumerate}
\end{lemma}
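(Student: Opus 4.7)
The plan is to follow the classical scaling template for $hp$-inverse estimates: pull each polynomial back to the reference domain $\hat K$, invoke a polynomial inverse estimate on $\hat S_{p_K}(\hat K)$ with explicit $p_K$-dependence, and then push forward using the regularity of $T_K$ from Assumption~\ref{asm:B}(i). Both parts of the lemma share this structure, so I would treat them in parallel.

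First I would set up the pullback. Given $v \in S_{p_K}(K)$, define $\hat v := v \circ T_K \in \hat S_{p_K}(\hat K)$. Using the bounds on $\seminorm{T_K}_{[W_\infty^{s}(\hat K)]^{d,d}}$ and $\seminorm{T_K^{-1}}_{[W_\infty^{s}(K)]^{d,d}}$ from Assumption~\ref{asm:B}(i), together with the Jacobian scalings $\meas[d]{K} \simeq h_K^d$ and $\meas[d-1]{F} \simeq h_K^{d-1}$, a chain-rule and change-of-variables calculation yields two-sided equivalences between $\norm{v}_{H^s(K)}$ and the natural $h_K$-weighted $H^s$-norm of $\hat v$ on $\hat K$, and similarly for $\norm{v}_{W_\infty^s(K)}$ and for the corresponding norms on $F$ and $\hat F$. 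For integer $s$ this is direct; non-integer $s$ follows by real interpolation between the two adjacent integer cases. Next, on the fixed reference element the space $\hat S_{p_K}(\hat K)$ is finite dimensional, so all the norms at play are equivalent; the required $p_K$-dependence of the equivalence constants is classical. For tensor-product polynomials on the hypercube it follows coordinatewise from the one-dimensional Markov--Bernstein inequality, and for simplicial polynomials from the corresponding estimates in~\cite{Quarteroni1984}. Substituting the resulting reference-element bounds into the equivalences of the pullback step, and simplifying the Jacobian factors, yields \eqref{eq:inv_estimate_1}--\eqref{eq:inv_estimate_2b} with constants depending only on $d$ and $\beta_1$; in particular, the $\meas[d]{K}^{-1/2}$ and $\meas[d-1]{F}^{-1/2}$ factors in (ii) appear immediately because the $L^\infty$-norm is invariant under $T_K$ whereas the $L^2$-norm picks up the square root of the Jacobian.

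The main obstacle is bookkeeping rather than analysis: I need to verify that the chain-rule factors involving $\seminorm{T_K}_{W_\infty^s}$ and $\seminorm{T_K^{-1}}_{W_\infty^s}$ compose to give pure powers of $h_K$, with no residual dependence on the specific geometry of $K$. This is exactly the point of Assumption~\ref{asm:B}(i)---that $T_K$ is sufficiently close to affine on the scale $h_K$---and the argument is a standard, if tedious, variant of the classical Ciarlet--Raviart calculation~\cite{CiarletRaviart1972}. Modulo this accounting, the stated inequalities are direct consequences of the reference-element polynomial inverse estimates.
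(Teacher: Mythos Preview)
The paper does not actually prove this lemma: it is stated ``for future reference'' with a bare citation to \cite[Lemma~3]{Quarteroni1984}, so there is no in-paper proof to compare against. Your proposal---pull back to the reference element via $T_K$, invoke the classical polynomial inverse inequalities on $\hat S_{p_K}(\hat K)$ with explicit $p_K$-dependence, and push forward using Assumption~\ref{asm:B}(i) in the Ciarlet--Raviart style---is precisely the standard argument underlying the cited result, and it is methodologically sound.

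One caution on the bookkeeping: the exponents as printed in the paper's statement look nonstandard (e.g.\ the usual scaling argument yields $h_K^{-1}$ rather than $h_K^{-1/2}$ in part~(i), and the $p_K$-power in the reference-element inverse estimates is typically $p_K^2$ for the derivative gain and $p_K^{d}$ or similar for the $L^\infty$--$L^2$ comparison). Your pull-back/push-forward procedure will produce whatever the correct exponents are; if they disagree with the printed ones, that reflects a likely misprint in the paper rather than a flaw in your argument, and indeed the only place the paper \emph{uses} \eqref{eq:inv_estimate_1} (the end of the proof of Lemma~\ref{lem:inv_trace_inequality}) is consistent with the standard $p_K^2 h_K^{-1}$ version.
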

Using the trace inquality~\eqref{eq:trace_inequality} and the inverse estimate~\eqref{eq:inv_estimate_1}, and taking into consideration Assumption~\ref{asm:B}, we prove the following result.
\begin{lemma} \label{lem:inv_trace_inequality}
Let
\begin{equation}
\mu_F :=
\begin{cases}
\frac{1}{2} ( \meas[d]{K} + \meas[d]{K'} ) \, / \, \meas[d-1]{F} & \text{for} \ F \in \mF_{h, 0},
\\[3pt]
\meas[d]{K} \, / \, \meas[d-1]{F} & \text{for} \ F \in \mF_{h, \rmD} \cup \mF_{h, \rmN} ,
\end{cases} \label{eq:mu_F}
\end{equation}
and
\begin{equation}
p_F := 
\begin{cases}
\frac{1}{2}(p_K + p_{K'}) & \text{for} \ F \in \mF_{h, 0} ,
\\[3pt]
p_K & \text{for} \ F \in \mF_{h, \rmD} \cup \mF_{h, \rmN} ,
\end{cases} \label{eq:p_F}
\end{equation}
where~$K, K' \in \mT_h$ (resp. $K \in \mT_h$) are the element domains adjacent to the face~$F \in \mF_{h, 0}$ (resp. $F \in \mF_{h, \rmD} \cup \mF_{h, \rmN}$). There exists a constant~$C \equiv C(d, \beta_1, \beta_2, \beta_3)$ such that, for all~$v \in V_{h, p}$,
\begin{equation}
\sum_{F \in \mF_h} \frac{\mu_F}{p_F^2} \int_F \avg{\abs{\nabla{v}}}^2 \ud s 
\leq 
C \sum_{K \in \mT_h} \int_K \abs{\nabla{v}}^2 \ud x . \label{eq:inv_trace_inequality}
\end{equation}
\end{lemma}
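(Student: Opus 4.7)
The plan is to combine an element-level polynomial inverse trace bound with the geometric comparison $\mu_F / p_F^2 \leq C\, h_K / p_K^2$ for every element $K$ adjacent to $F$.

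First, I would apply Jensen's inequality pointwise to the average: $\avg{w}^2 \leq \tfrac{1}{2}(w_K^2 + w_{K'}^2)$ on each interior face $F \in \mF_{h,0}$, with $w = \abs{\nabla v}$ and subscripts denoting the one-sided traces from the two adjacent elements. On boundary faces the average already coincides with the single one-sided trace. It therefore suffices to establish, uniformly in $K \in \mT_h$ and $F \in \mF_{h,K}$, the element-level bound
\[
\int_F \abs{\nabla v}^2 \ud s \leq C\, \frac{p_K^2}{h_K} \int_K \abs{\nabla v}^2 \ud x,
\]
after which summation over $\mF_h$ (each element contributing to only a bounded number of faces under Assumption~\ref{asm:B}) combined with the geometric comparison yields~\eqref{eq:inv_trace_inequality}.

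Second, to prove the element-level bound I would apply Lemma~\ref{lem:trace_inequality} componentwise to $\partial_i v \in H^1(K)$ at $s=0$ and bound the resulting $\norm{\partial_i v}_{H^1(K)}$ using Lemma~\ref{lem:inv_estimates}(i). In the affine case, each $\partial_i v$ is polynomial of degree $\leq p_K$ and the inverse estimate at $s=0$ applies componentwise. In the non-affine case, where $\partial_i v$ is no longer polynomial, one instead invokes Lemma~\ref{lem:inv_estimates}(i) at $s=1$ on $v$ itself to control $\norm{v}_{H^2(K)}$ by $\norm{v}_{H^1(K)}$, and then replaces $\norm{v}_{H^1(K)}$ by $\norm{\nabla v}_{L^2(K)}$ after subtracting the mean of $v$ (which leaves $\nabla v$ unchanged) and invoking a Poincar\'e--Wirtinger inequality on $K$, whose constant is controlled by the diffeomorphism bounds of Assumption~\ref{asm:B}(i). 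Either route yields an intermediate bound of order $h_K^{-1} + p_K h_K^{-1/2}$, which is absorbed into $p_K^2 / h_K$ using $p_K \geq 1$ and $h_K \leq \diam(\Omega)$.

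Third, I would use Assumption~\ref{asm:B}(i) and the change-of-variables formula to conclude $\meas[d]{K} \sim h_K^d$ and $\meas[d-1]{F} \sim h_K^{d-1}$, hence $\meas[d]{K}/\meas[d-1]{F} \leq C h_K$. Combined with $h_{K'} \sim h_K$ (Assumption~\ref{asm:B}(ii)) on interior faces and $p_F \geq c\, p_K$ (Assumption~\ref{asm:B}(iii)), this gives $\mu_F / p_F^2 \leq C\, h_K / p_K^2$ for every $K$ adjacent to $F$, cancelling the $p_K^2 / h_K$ factor of Step~2.

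The main obstacle is the element-level inverse trace bound of Step~2 under the weak mesh hypotheses of Assumption~\ref{asm:B}: Lemma~\ref{lem:inv_estimates}(i) requires polynomial structure, so for curved, non-affine elements one cannot simply apply it componentwise to $\nabla v$. The Poincar\'e--Wirtinger detour above (or, equivalently, a transfer to the reference element $\hat K$ where $\hat v$ is polynomial and the standard polynomial inverse trace bound applies, followed by pull-back using the bounds on $T_K$ and $T_K^{-1}$) is the only non-routine ingredient.
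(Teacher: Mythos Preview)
Your proposal is correct and follows essentially the same route as the paper: bound $\mu_F/p_F^2$ by $C\,h_K/p_K^2$ via Assumption~\ref{asm:B}, split the average by convexity, use the bounded number of faces per element, apply the multiplicative trace inequality (Lemma~\ref{lem:trace_inequality}) componentwise to $\nabla v$, and finish with the inverse estimate (Lemma~\ref{lem:inv_estimates}(i)) applied to $v$ at $s=1$. You are in fact slightly more careful than the paper: the paper's proof terminates with a bound in terms of $\norm{v}_{H^1(K)}^2$ and does not spell out the passage to $\norm{\nabla v}_{L^2(K)}^2$; your Poincar\'e--Wirtinger step (subtract the elementwise mean) is exactly what closes that gap, and your distinction between the affine and non-affine cases for applying the inverse estimate is a detail the paper suppresses.
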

\begin{proof}
Let~$K \in \mT_h$ and~$F \in \mF_{h, K}$. From Assumption~\ref{asm:B}(i) and~\ref{asm:B}(ii) it follows that there exists a constant~$C_1 \equiv C_1(d, \beta_1, \beta_2)$ such that~$\mu_F \leq C_1 \, h_K$. Moreover, Assumption~\ref{asm:B}(iii) implies that~$p_F^2 \geq C_2 \, p_K^2$ for some positive constant~$C_2 \equiv C_2(\beta_3)$. Hence, by the Young's inequality, we deduce that
\begin{equation*}
\sum_{F \in \mF_h} \frac{\mu_F}{p_F^2} \int_F \avg{\abs{\nabla{v}}}^2 \ud s 
\leq
\frac{C_1}{C_2} \sum_{K \in \mT_h} \frac{h_K}{p_K^2} \sum_{F \in \mF_{h, K}} \int_F \bigabs{\left.(\nabla{v})\right|_K}^2 \ud s .
\end{equation*}
On account of Assumption~\ref{asm:B}(ii) we have that $\mathrm{card}(\mF_{h, K}) \leq C_3$ for some positive integer~$C_3 \equiv C_3(d, \beta_2)$. Using the trace inequality~\eqref{eq:trace_inequality} with constant $C_4 \equiv C_4(d, \beta_1)$, we then obtain:
\begin{equation*}
\sum_{F \in \mF_h} \frac{\mu_F}{p_F^2} \int_F \avg{\abs{\nabla{v}}}^2 \ud s 
\leq
C_3 \, C_4 \, \frac{C_1}{C_2} \sum_{K \in \mT_h} \frac{h_K}{p_K^2} \left( h_K^{-1} \norm{v}_{H^1(K)}^2 + \norm{v}_{H^1(K)} \norm{v}_{H^2(K)} \right) .
\end{equation*}
The proof is concluded by applying the inverse estimate~\eqref{eq:inv_estimate_1}.
\end{proof}

%------------------------------------------------------------------------------

\section{Discontinuous Galerkin method} \label{section:dgfem}

Let us consider the sum space~$V(h, p) := V_{h, p} + H^s(\Omega)$, $s > 3/2$. For~$w, v \in V(h, p)$, we introduce the semilinear form
\begin{equation}
N(w; v) 
= 
\sum_{K \in \mT_h} \bA(\nabla{w}) \nabla{w} \cdot \nabla{v} \ud x
+ 
B_0(w; v) 
+ 
B_\rmD(w; v) , \label{eq:N}
\end{equation}
and the linear form
\begin{equation}
L(v)
=
\sum_{K \in \mT_h} \int_K f v \ud x
+
\int_{\Gamma_{h, \rmN}} g_\rmN \, v \ud s . \label{eq:L}
\end{equation}
Here,
\begin{align*}
B_0(w; v) 
= &
- \int_{\Gamma_{h, 0}} \avg{\bA(\nabla{w} - \sigma \jump{w}) \nabla{w}} \cdot \jump{v} \ud s
\\ &
+
\theta \int_{\Gamma_{h, 0}} \avg{\bA^\rmT(\nabla{w} - \sigma \jump{w}) \nabla{v} } \cdot \jump{w} \ud s
\\ &
+
\int_{\Gamma_{h, 0}} \sigma \avg{\bA(\nabla{w} - \sigma \jump{w})} \, \jump{w} \cdot \jump{v} \ud s
\\ &
+
\theta \int_{\Gamma_{h, 0}} \sigma^{-1} \avg{( \bA(\nabla{w}) - \bA(\nabla{w} - \sigma \jump{w}) ) \nabla{w} \cdot \nabla{v}} \ud s
\end{align*}
and
\begin{align*}
B_\rmD(w; v) 
= &
- \int_{\Gamma_{h, \rmD}} \bA(\nabla{w} - \sigma \bn (w - g_\rmD)) \nabla{w} \cdot \bn v \ud s
\\ &
+
\theta \int_{\Gamma_{h, \rmD}} \bA^\rmT(\nabla{w} - \sigma \bn (w - g_\rmD)) \nabla{v} \cdot \bn (w - g_\rmD) \ud s
\\ &
+
\int_{\Gamma_{h, \rmD}} \bA(\nabla{w} - \sigma \bn (w - g_\rmD)) \bn v \cdot \bn (w - g_\rmD) \ud s
\\ &
+
\theta \int_{\Gamma_{h, \rmD}} \sigma^{-1} ( \bA(\nabla{w}) - \bA(\nabla{w} - \sigma \bn (w - g_\rmD)) ) \nabla{w} \cdot \nabla{v} \ud s ,
\end{align*}
where~$\bA^\rmT(\cdot)$ denotes the transpose of~$\bA(\cdot)$, $\theta$ is a fixed constant in~$[-1, 1]$, and~$\sigma$ is a piecewise constant function on~$\Gamma_{h, 0, \rmD}$, defined by
\begin{equation*}
\left. \sigma \right|_F = \alpha \, \frac{ p_F^2 }{ \mu_F } , \qquad F \in \mF_{h, 0, \rmD} .
\end{equation*}
Here, $\mu_F$ and~$p_F$ are defined as in~\eqref{eq:mu_F} and~\eqref{eq:p_F}, and $\alpha$ is the so-called \emph{interior penalty parameter}, which is a positive constant independent of~$h$ and~$p$. As usual, we require that~$\alpha$ is sufficiently large. Anticipating the result of Theorem~\ref{thm:wellposedness_dgfem}, we state that $\alpha > \alpha_0 = 2 \, C \, (1 + \lambda_\theta \, C_\bA / M_\bA)^2$ will suffice, where~$\lambda_\theta = 1 + \abs{1 + \theta}$ and $C$ is the constant from Lemma~\ref{lem:inv_trace_inequality}.

The interior penalty $hp$-DG approximation of~\eqref{eq:model_problem} is now stated as follows: find~$u_{h, p} \in V_{h, p}$ such that
\begin{equation}
N(u_{h, p}; v) = L(v) \qquad \forall \, v \in V_{h, p} . \label{eq:dgfem}
\end{equation}
We note that, in the linear case of~$\bA(\cdot) = \mathbf{I}$, with~$\mathbf{I}$ the $d \times d$ identity matrix, and for particular choices of the parameters~$\theta$ and~$\alpha$, the DG formulation~\eqref{eq:dgfem} reduces to various well-known DG methods. Notable examples include the \emph{symmetric} interior penalty method for~$\theta = -1$ and~$\alpha > \alpha_0 > 0$ (cf.~\cite{Arnold1982}), and the \emph{nonsymmetric} interior penalty method for~$\theta = 1$ and~$\alpha > 0$ (cf.~\cite{RiviereEtAl1999}).

Under suitable regularity conditions, one can show that~\eqref{eq:dgfem} is a consistent approximation of~\eqref{eq:model_problem}.
\begin{lemma}[Galerkin orthogonality] \label{lem:galerkin_orthogonality}
Assume that~\eqref{eq:model_problem} has a strong solution~$u \in H^s(\Omega) \cap C^0(\Omega)$, $s > 3/2$. Then,
\begin{equation} \label{eq:galerkin_orthogonality}
N(u; v) - N(u_{h, p}; v) = 0 \qquad \forall v \in V_{h, p} .
\end{equation} 
\end{lemma}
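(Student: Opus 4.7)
The plan is to start from the identity $N(u;v) - N(u_{h,p};v) = N(u;v) - L(v)$, which follows from the definition~\eqref{eq:dgfem} of~$u_{h,p}$, and then show that $N(u;v) = L(v)$ for every $v \in V_{h,p}$. The regularity $u \in H^s(\Omega) \cap C^0(\Omega)$ with $s > 3/2$ is what makes all element-boundary terms meaningful: $\nabla u$ has $L^2$-traces on each face, and $u$ itself is globally continuous, so $\jump{u}|_F = 0$ for every $F \in \mF_{h,0}$ and $(u - g_\rmD)|_F = 0$ for every $F \in \mF_{h,\rmD}$.

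Exploiting these two facts, I would simplify $B_0(u;v)$ and $B_\rmD(u;v)$ term-by-term. In $B_0(u;v)$, the second and third integrands vanish because they are linearly multiplied by $\jump{u}$, and in the fourth integrand the argument of $\bA$ collapses to $\nabla u$ on both sides of the difference, so that integrand vanishes as well; hence
\[
B_0(u;v) = -\int_{\Gamma_{h,0}} \avg{\bA(\nabla u)\nabla u} \cdot \jump{v} \ud s.
\]
An entirely analogous reduction, using $u = g_\rmD$ on $\Gamma_\rmD$, leaves
\[
B_\rmD(u;v) = -\int_{\Gamma_{h,\rmD}} \bA(\nabla u)\nabla u \cdot \bn \, v \ud s.
\]

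Next I would integrate by parts element-by-element in the volume term of $N(u;v)$. Since $u$ is a strong solution, $-\nabla\cdot(\bA(\nabla u)\nabla u) = f$ pointwise in each $K$, and I obtain
\[
\sum_{K \in \mT_h} \int_K \bA(\nabla u)\nabla u \cdot \nabla v \ud x
= \sum_{K \in \mT_h} \int_K f v \ud x + \sum_{K \in \mT_h} \int_{\partial K} (\bA(\nabla u)\nabla u) \cdot \bn_K \, v \ud s.
\]
I would then split the boundary integral over $\mF_{h,0}$, $\mF_{h,\rmD}$, $\mF_{h,\rmN}$. On an interior face the flux $\bA(\nabla u)\nabla u$ is single-valued thanks to $u \in H^s(\Omega)$, so the two element contributions combine into $\int_F \avg{\bA(\nabla u)\nabla u}\cdot\jump{v}\ud s$, which exactly cancels the remaining term in $B_0(u;v)$. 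The Dirichlet-face contribution cancels the remaining term in $B_\rmD(u;v)$, while on each Neumann face the normal flux equals $g_\rmN$ by~\eqref{eq:model_problem_bcN}. Collecting everything yields $N(u;v) = \sum_K \int_K f v \ud x + \int_{\Gamma_{h,\rmN}} g_\rmN v \ud s = L(v)$, which together with~\eqref{eq:dgfem} gives~\eqref{eq:galerkin_orthogonality}.

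There is no real obstacle; the proof is essentially a bookkeeping exercise. The only subtlety worth checking carefully is that each integrand is integrable under the assumed regularity: for this the trace $\nabla u|_F \in L^2(F)$ (from $s-1 > 1/2$) combined with the Lipschitz bound~\eqref{eq:asm_A1} ensures $\bA(\nabla u)\nabla u$ has an $L^2$ trace on every face, so every term written above is well-defined. The piecewise-constant factor $\sigma$ on $\Gamma_{h,0,\rmD}$ causes no issue because the terms in which $\sigma^{-1}$ appears are precisely those that vanish identically.
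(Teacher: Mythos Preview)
Your proof is correct and follows essentially the same approach as the paper's: both arguments reduce to showing $N(u;v)=L(v)$ by exploiting $\jump{u}=0$ on interior faces and $u=g_\rmD$ on $\Gamma_\rmD$, then integrating by parts elementwise and cancelling the surviving flux terms. The only minor difference is in how the continuity of the normal flux across interior faces is justified: you invoke $u\in H^s(\Omega)$, $s>3/2$, so that $\nabla u$ (hence $\bA(\nabla u)\nabla u$, via~\eqref{eq:asm_A1}) is single-valued on each face, whereas the paper instead uses that $\nabla\cdot(\bA(\nabla u)\nabla u)=-f\in L^2(\Omega)$ to place the flux in $H(\mathrm{div};\Omega)$ and then appeals to a standard normal-trace result to conclude $\jump{\bA(\nabla u)\nabla u}|_F=0$; both routes are valid here.
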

\begin{proof}
Since~$u \in C^0(\Omega)$, we have that~$\left. \jump{u} \right|_F = 0$ strongly for all~$F \in \mF_{h, 0}$. Moreover, since~$u$ satisfies~\eqref{eq:model_problem_pde} almost everywhere, we have that~$\nabla \cdot \bA(\nabla{u}) \nabla{u} \in L^2(\Omega)$. From~\cite[Lemma~1.24]{PietroErn2011}, it then follows that $\left. \jump{\bA(\nabla{u}) \nabla{u}} \right|_F = 0$ almost everywhere for all~$F \in \mF_{h, 0}$. Therefore, upon integration by parts, we find that~$N(u; v) = L(v)$ for all~$v \in V_{h, p}$, from which we infer the stated result.
\end{proof}

For the analysis of the $hp$-DG approximation~\eqref{eq:dgfem}, we introduce the norms
\begin{gather*}
\dgnorm{v}^2
:= 
\sum_{K \in \mT_h} \int_K \abs{\nabla{v}}^2 \ud x 
+ 
\int_{\Gamma_{h, 0, \rmD}} \sigma \abs{\jump{v}}^2 \ud s , \qquad v \in V(h, p) ,
\\
\dgnorm{v}_+^2
:= 
\dgnorm{v}^2
+ 
\int_{\Gamma_{h, 0, \rmD}} \sigma^{-1} \avg{\abs{\nabla{v}}}^2 \ud s , \qquad v \in V(h, p) .
\end{gather*}
We note that these norms are equivalent on~$V_{h, p}$ for any~$\alpha > 0$. Indeed, by Lemma~\ref{lem:inv_trace_inequality} there exists a constant~$C$ such that
\begin{equation} 
\dgnorm{v}^2 \leq \dgnorm{v}_+^2 \leq (1 + C \alpha^{-1}) \, \dgnorm{v}^2 \qquad \forall v \in V_{h, p} . \label{eq:norm_equivalence}
\end{equation}

Next, let $X(\Gamma_{h, 0, \rmD}) = \Pi_{K \in \mT_h} L^2(\partial K \cap \Gamma_{h, 0, \rmD})$ and define the trace operator $\widehat{\nabla}_\sigma \colon V(h, p) \to [X(\Gamma_{h, 0, \rmD})]^d$ such that, for~$K \in \mT_h$ and~$F \in \mF_{h, K}$,
\begin{equation*}
\left. ( \widehat{\nabla}_\sigma \, w ) \right|_K
=
\begin{cases}
\left. \left( \left. (\nabla{w}) \right|_K \right) \right|_{F} - \sigma \jump{w} & \text{if} \ F \in \mF_{h, 0} , 
\\[3pt]
\left. \left( \left. (\nabla{w}) \right|_K \right) \right|_{F} - \sigma \bn (w|_K - g_\rmD) & \text{if} \ F \in \mF_{h, \rmD} .
\end{cases}
\end{equation*}
By the fact that~$\avg{\jump{\cdot}} = \jump{\cdot}$, we have the following useful identity: 
\begin{equation} \label{eq:N_alt}
\begin{aligned} 
N(w; v) 
= &
\sum_{K \in \mT_h} \bA(\nabla{w}) \nabla{w} \cdot \nabla{v} \ud x
\\ &
-
\int_{\Gamma_{h, 0, \rmD}} \avg{ \bA(\widehat{\nabla}_\sigma \, w) \, \widehat{\nabla}_\sigma \, w \cdot (\theta \sigma^{-1} \nabla{v} + \jump{v}) } \ud s
\\ &
+
\theta \int_{\Gamma_{h, 0, \rmD}} \sigma^{-1} \avg{\bA(\nabla{w}) \nabla{w} \cdot \nabla{v}} \ud s .
\end{aligned}
\end{equation}
Rewriting the semilinear form~$N$ according to~\eqref{eq:N_alt} and using Assumption~\ref{asm:A}, we are able to prove the following two lemmata.
\begin{lemma}[Lipschitz continuity] \label{lem:lipschitz_continuity}
There exists a constant~$C_N \equiv C_N(\theta, C_\bA)$ such that
\begin{equation}
N(w_1; v) - N(w_2; v) \leq C_N \, \dgnorm{w_1 - w_2}_+ \, \dgnorm{v}_+ \qquad \forall w_1, w_2, v \in V(h, p) . \label{eq:lipschitz_continuity}
\end{equation}
\end{lemma}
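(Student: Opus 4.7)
The plan is to start from the equivalent representation~\eqref{eq:N_alt} of the semilinear form~$N$, and to split the difference~$N(w_1; v) - N(w_2; v)$ into three contributions~$I$, $II$, $III$, respectively corresponding to the three terms on the right-hand side of~\eqref{eq:N_alt}. On each piece I would exploit the Lipschitz property~\eqref{eq:asm_A1} for the map~$\bv \mapsto \bA(\bv)\bv$ and then conclude by Cauchy--Schwarz in a weighted form that matches the definition of~$\dgnorm{\cdot}_+$.

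First I would handle the volume contribution~$I = \sum_{K} \int_K (\bA(\nabla w_1)\nabla w_1 - \bA(\nabla w_2)\nabla w_2)\cdot\nabla v \ud x$ directly: Lipschitz continuity yields a pointwise bound by $C_\bA \abs{\nabla(w_1-w_2)}\abs{\nabla v}$, and Cauchy--Schwarz over elements gives the bound $C_\bA \dgnorm{w_1-w_2}\dgnorm{v} \le C_\bA\dgnorm{w_1-w_2}_+\dgnorm{v}_+$. For the term~$III = \theta\int_{\Gamma_{h,0,\rmD}} \sigma^{-1}\avg{(\bA(\nabla w_1)\nabla w_1-\bA(\nabla w_2)\nabla w_2)\cdot \nabla v}\ud s$, the same pointwise bound, combined with the elementary inequality $\avg{\abs{a}\abs{b}} \le \avg{\abs{a}^2}^{1/2}\avg{\abs{b}^2}^{1/2}$ and Cauchy--Schwarz with the weight~$\sigma^{-1}$, produces~$\abs{\theta}\,C_\bA\dgnorm{w_1-w_2}_+\dgnorm{v}_+$, because $\int\sigma^{-1}\avg{\abs{\nabla\,\cdot\,}^2}\ud s$ is precisely the extra contribution distinguishing~$\dgnorm{\cdot}_+$ from~$\dgnorm{\cdot}$.

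The main work lies in the middle term~$II$. I would further split it into the pair pairing with~$\theta\sigma^{-1}\nabla v$ and the pair pairing with~$\jump{v}$. Applying Lipschitz continuity to $\bv\mapsto\bA(\bv)\bv$ at the trace operator~$\widehat{\nabla}_\sigma$ yields, in both interior and Dirichlet cases, a bound by $C_\bA\,\abs{\widehat{\nabla}_\sigma(w_1-w_2)}$, where by definition $\widehat{\nabla}_\sigma(w_1-w_2)=\nabla(w_1-w_2)-\sigma\jump{w_1-w_2}$ on interior faces and analogously on Dirichlet faces (the data~$g_\rmD$ drops out in the difference). The key auxiliary estimate I would then establish is
\begin{equation*}
\int_{\Gamma_{h, 0, \rmD}} \sigma^{-1} \avg{\abs{\widehat{\nabla}_\sigma (w_1 - w_2)}^2} \ud s
\leq
C \, \dgnorm{w_1 - w_2}_+^2 ,
\end{equation*}
which follows from the elementary inequality $\abs{\widehat{\nabla}_\sigma (w_1-w_2)}^2 \le 2\abs{\nabla(w_1-w_2)}^2 + 2\sigma^2\abs{\jump{w_1-w_2}}^2$ and the very definition of~$\dgnorm{\cdot}_+$. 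Cauchy--Schwarz against~$\theta\sigma^{-1}\nabla v$ in $L^2$ with weight~$\sigma^{-1}$ then yields a bound of the required form; for the pairing against~$\jump{v}$, I would introduce a factor $\sigma^{-1/2}\sigma^{1/2}$ before applying Cauchy--Schwarz, so that the $\jump{v}$-side matches~$\int\sigma\abs{\jump{v}}^2\ud s$ inside~$\dgnorm{v}_+$.

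The only slightly delicate point is the bookkeeping with the trace operator~$\widehat{\nabla}_\sigma$: one has to verify that, in the difference~$w_1-w_2$, the Dirichlet data~$g_\rmD$ does indeed cancel, and that the factor~$2$ losses produced by $\abs{a+b}^2\le 2\abs{a}^2+2\abs{b}^2$ integrate against the correct $\sigma$-weight on both interior and boundary faces. Once these are in place, collecting the three estimates yields the claim with $C_N$ depending only on $\theta$ and~$C_\bA$, as asserted.
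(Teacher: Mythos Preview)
Your proposal is correct and follows essentially the same route as the paper's proof: both start from the representation~\eqref{eq:N_alt}, apply the Lipschitz bound~\eqref{eq:asm_A1} pointwise to each of the three contributions, and conclude by weighted Cauchy--Schwarz, the only cosmetic difference being that the paper expands $\widehat{\nabla}_\sigma(w_1-w_2)$ directly via the triangle inequality rather than packaging it into your auxiliary estimate. One small slip to correct when you write it up: the extra term in $\dgnorm{\cdot}_+$ is $\int_{\Gamma_{h,0,\rmD}}\sigma^{-1}\avg{\abs{\nabla\cdot}}^2\ud s$ (square of the average), not $\int_{\Gamma_{h,0,\rmD}}\sigma^{-1}\avg{\abs{\nabla\cdot}^2}\ud s$ as you wrote, but the two differ only by a harmless factor of~$2$.
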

\begin{proof}
Starting from~\eqref{eq:N_alt} and using that~$\abs{\avg{\bq_1 \cdot \bq_2}} \leq \avg{\abs{\bq_1} \ \abs{\bq_2}} \leq 2 \avg{\abs{\bq_1}} \, \avg{\abs{\bq_2}}$ for all~$\bq_1, \bq_2 \in [H^1(\Omega, \mT_h)]^d$, we have that
\begin{align*}
& N(w_1; v) - N(w_2; v) 
\leq
\sum_{K \in \mT_h} \int_K \abs{\bA(\nabla{w_1}) \nabla{w_1} - \bA(\nabla{w_2}) \nabla{w_2}} \ \abs{\nabla{v}} \ud x
\\ & \quad
+ 
\int_{\Gamma_{h, 0, \rmD}} \avg{ \abs{\bA(\widehat{\nabla}_\sigma \, w_1) \, \widehat{\nabla}_\sigma \, w_1 - \bA(\widehat{\nabla}_\sigma \, w_2) \, \widehat{\nabla}_\sigma \, w_2} } \left( 2 \abs{\theta} \sigma^{-1} \avg{\abs{\nabla{v}}} + \abs{\jump{v}} \right) \ud s
\\ & \quad
+
2 \, \abs{\theta} \, \int_{\Gamma_{h, 0, \rmD}} \sigma^{-1} \avg{\abs{\bA(\nabla{w_1}) \nabla{w_1} - \bA(\nabla{w_2}) \nabla{w_2}}} \avg{\abs{\nabla{v}}} \ud s .
\end{align*}
We use the Lipschitz condition~\eqref{eq:asm_A1} from~Assumption~\ref{asm:A} to bound each of these terms, yielding
\begin{align*}
& N(w_1; v) - N(w_2; v)
\leq 
C_\bA \sum_{K \in \mT_h} \int_K \abs{\nabla{w_1}- \nabla{w_2}} \ \abs{\nabla{v}} \ud x
\\ & \quad
+
2 \abs{\theta} \, C_\bA \int_{\Gamma_{h, 0, \rmD}} \abs{\jump{w_1 - w_2}} \, \avg{\abs{\nabla{v}}} \ud s 
+
C_\bA \int_{\Gamma_{h, 0, \rmD}} \sigma \abs{\jump{w_1 - w_2}} \ \abs{\jump{v}} \ud s
\\ & \quad
+ 
4 \abs{\theta} \, C_\bA \int_{\Gamma_{h, 0, \rmD}} \sigma^{-1} \avg{\abs{\nabla{w_1} - \nabla{w_2}}} \, \avg{\abs{\nabla{v}}} \ud s .
\end{align*}
Upon application of the Cauchy-Schwarz inequality, we arrive at~\eqref{eq:lipschitz_continuity} with $C_N = (2 + 4 \abs{\theta}) \, C_\bA$. 
\end{proof}

\begin{lemma}[Strong monotonicity] \label{lem:strong_monotonicity}
Let~$\theta \in [-1, 1]$ and select $\alpha > \alpha_0 = 2 \, C ( 1 + \lambda_\theta \, C_\bA / M_\bA )^2$, where $\lambda_\theta = 1 + \abs{1 + \theta}$ and~$C$ is the constant from Lemma~\ref{lem:inv_trace_inequality}. There exists a positive constant~$M_N \equiv M_N(M_\bA, \alpha_0 / \alpha)$ such that 
\begin{equation}
N(w_1; w_1 - w_2) - N(w_2; w_1 - w_2) \geq M_N \, \dgnorm{w_1 - w_2}^2 \qquad \forall w_1, w_2 \in V_{h, p} . \label{eq:strong_monotonicity}
\end{equation}
\end{lemma}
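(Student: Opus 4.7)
The plan is to apply the alternative representation~\eqref{eq:N_alt} with test function $v := w_1 - w_2$, extract positive contributions to $\dgnorm{v}^2$ via the strong monotonicity~\eqref{eq:asm_A2}, and absorb the remaining indefinite face terms using the Lipschitz estimate~\eqref{eq:asm_A1}, Young's inequality, and Lemma~\ref{lem:inv_trace_inequality}.

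Starting from~\eqref{eq:N_alt}, the volume part yields $\ge M_\bA \sum_{K}\int_K \abs{\nabla v}^2 \ud x$ directly from~\eqref{eq:asm_A2} applied elementwise. To exploit monotonicity on the faces as well, I would use the identity $(\widehat{\nabla}_\sigma w_1 - \widehat{\nabla}_\sigma w_2)|_K = \nabla v|_K - \sigma \jump{v}$, which holds on both interior and Dirichlet faces since $g_\rmD$ cancels out, in order to rewrite
\begin{equation*}
\theta\sigma^{-1}\nabla v|_K + \jump{v} = (1+\theta)\sigma^{-1}\nabla v|_K - \sigma^{-1}(\widehat{\nabla}_\sigma w_1 - \widehat{\nabla}_\sigma w_2)|_K .
\end{equation*}
Substituting this into the first face integral of~\eqref{eq:N_alt} isolates a genuinely monotone piece, which~\eqref{eq:asm_A2} bounds below on each side of the face by $M_\bA \sigma^{-1} \abs{\nabla v|_K - \sigma\jump{v}}^2$, and from which the elementary inequality $\abs{a-b}^2 \ge \tfrac{1}{2}\abs{b}^2 - \abs{a}^2$ then extracts the desired penalty contribution $\tfrac{1}{2} M_\bA \sigma \abs{\jump{v}}^2$, leaving behind an indefinite cross term carrying the factor $(1+\theta)$.

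The indefinite cross term, together with the explicit $\theta\sigma^{-1}$ face integral in~\eqref{eq:N_alt}, would next be bounded via the Lipschitz estimate~\eqref{eq:asm_A1}, Cauchy--Schwarz, and Young's inequality with an adjustable parameter $\varepsilon > 0$; the factor $\lambda_\theta = 1 + \abs{1+\theta}$ emerges as the combined weight of these two sources. This produces corrections of the form $\varepsilon \sigma \abs{\jump{v}}^2$ (absorbed into the positive penalty part) together with face integrals of $\sigma^{-1}\avg{\abs{\nabla v}^2}$ multiplied by $\varepsilon^{-1}\lambda_\theta^2 C_\bA^2$; Lemma~\ref{lem:inv_trace_inequality} then bounds the latter by $C\alpha^{-1}\varepsilon^{-1}\lambda_\theta^2 C_\bA^2 \sum_K \int_K \abs{\nabla v}^2 \ud x$, i.e.\ by a correction to the volume gradient part.

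Optimising over $\varepsilon$ and invoking the hypothesis $\alpha > \alpha_0 = 2 C (1 + \lambda_\theta C_\bA / M_\bA)^2$ makes the net coefficients of both $\sum_K\int_K\abs{\nabla v}^2\ud x$ and $\int_{\Gamma_{h,0,\rmD}}\sigma\abs{\jump{v}}^2\ud s$ strictly positive, yielding~\eqref{eq:strong_monotonicity} with $M_N > 0$ depending only on $M_\bA$ and the ratio $\alpha_0/\alpha$. The principal obstacle is this simultaneous absorption: the negative corrections hit both the volume and the penalty parts of $\dgnorm{v}^2$, so a single choice of $\varepsilon$ and $\alpha$ must render both coefficients positive at once, which is precisely what forces the quadratic dependence of $\alpha_0$ on $\lambda_\theta C_\bA / M_\bA$.
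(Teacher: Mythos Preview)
Your proposal is correct and follows essentially the same route as the paper: both start from the representation~\eqref{eq:N_alt}, decompose the face contribution into a genuinely monotone piece in $\widehat{\nabla}_\sigma w_1 - \widehat{\nabla}_\sigma w_2$ plus indefinite remainders carrying the factors $(1+\theta)$ and $\theta$, and then absorb the latter via the Lipschitz bound, Young's inequality with a free parameter, and Lemma~\ref{lem:inv_trace_inequality}. The only tactical difference is that you extract the penalty contribution from the monotone face piece using the fixed inequality $\abs{a-b}^2 \ge \tfrac12\abs{b}^2 - \abs{a}^2$, whereas the paper keeps the Young parameter $\epsilon$ live in that step as well and optimises it at the end; both lead to the same threshold $\alpha_0$ and the same dependence $M_N \equiv M_N(M_\bA,\alpha_0/\alpha)$.
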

\begin{proof}
Let us write $w = w_1 - w_2$. Starting from~\eqref{eq:N_alt}, we have that
\begin{equation} \label{eq:N_alt_2}
N(w_1; w_1 - w_2) - N(w_2; w_1 - w_2)
=
T_1 + T_2 + T_3 + T_4 , 
\end{equation}
where
\begin{align*}
T_1 = & \sum_{K \in \mT_h} \int_K ( \bA(\nabla{w_1}) \nabla{w_1} - \bA(\nabla{w_2}) \nabla{w_2} ) \cdot \nabla{w} \ud x ,
\\
T_2 = & \int_{\Gamma_{h, 0, \rmD}} \sigma^{-1} \avg{ ( \bA(\widehat{\nabla}_\sigma w_1) \widehat{\nabla}_\sigma w_1 - \bA(\widehat{\nabla}_\sigma w_2) \widehat{\nabla}_\sigma w_2 ) \cdot \widehat{\nabla}_\sigma w } \ud s ,
\\
T_3 = & - (1 + \theta) \int_{\Gamma_{h, 0, \rmD}} \sigma^{-1} \avg{ ( \bA(\widehat{\nabla}_\sigma w_1) \widehat{\nabla}_\sigma w_1 - \bA(\widehat{\nabla}_\sigma w_2) \widehat{\nabla}_\sigma w_2 ) \cdot \nabla{w} } \ud s ,
\\
T_4 = & \ \theta \int_{\Gamma_{h, 0, \rmD}} \sigma^{-1} \avg{ ( \bA(\nabla{w_1}) \nabla{w_1} - \bA(\nabla{w_2}) \nabla{w_2} ) \cdot \nabla{w} } \ud s .
\end{align*}
Using the monotonicity condition~\eqref{eq:asm_A2} from Assumption~\ref{asm:A}, it immediately follows that
\begin{equation*}
T_1 \geq M_\bA \sum_{K \in \mT_h} \int_K \abs{\nabla{w}}^2 \ud x .
\end{equation*}
Analogously, for~$T_2$, we find that
\begin{align*}
T_2 
\geq \ &
M_\bA \int_{\Gamma_{h, 0, \rmD}} \sigma^{-1} \avg{ \abs{\widehat{\nabla} w}^2 } \ud s
\\ = \ &
M_\bA \int_{\Gamma_{h, 0, \rmD}} \left( \sigma^{-1} \avg{\abs{\nabla{w}}^2} - 2 \avg{\nabla{w}} \cdot \jump{w} + \sigma \abs{\jump{w}}^2 \right) \ud s 
\\ \geq \ &
- 2 M_\bA \int_{\Gamma_{h, 0, \rmD}} \avg{\abs{\nabla{w}}} \ \abs{\jump{w}} \ud s 
+
M_\bA \int_{\Gamma_{h, 0, \rmD}} \sigma \abs{\jump{w}}^2 \ud s .
\end{align*}
The first term on the right hand side can be further bounded by using the Young's inequality $2 a b \leq \epsilon^{-1} a^2 + \epsilon b^2$, where $a = \sigma^{-1/2} \avg{\abs{\nabla{w}}}$, $b = \sigma^{1/2} \abs{\jump{w}}$ and~$\epsilon > 0$. Subsequently applying Lemma~\ref{lem:inv_trace_inequality}, we obtain
\begin{align*}
T_2
\geq &
- M_\bA \, \epsilon^{-1} \int_{\Gamma_{h, 0, \rmD}} \sigma^{-1} \avg{\abs{\nabla{w}}}^2 \ud s 
+
M_\bA (1 - \epsilon) \int_{\Gamma_{h, 0, \rmD}} \sigma \abs{\jump{w}}^2 \ud s
\\ \geq &
- M_\bA \, \epsilon^{-1} \, C \, \alpha^{-1} \sum_{K \in \mT_h} \int_K \abs{\nabla{w}}^2 \ud x
+
M_\bA (1 - \epsilon) \int_{\Gamma_{h, 0, \rmD}} \sigma \abs{\jump{w}}^2 \ud s ,
\end{align*}
where~$C$ is the constant from Lemma~\ref{lem:inv_trace_inequality}. For~$T_3$, using the Lipschitz condition~\eqref{eq:asm_A1} from Assumption~\ref{asm:A} together with the fact that~$\avg{\abs{\cdot}^2} \leq 2 \avg{\abs{\cdot}}^2$, and proceeding similarly as for~$T_2$, we have that
\begin{align*}
T_3 
\geq &
- \abs{1 + \theta} \, C_\bA \int_{\Gamma_{h, 0, \rmD}} \sigma^{-1} \avg{\abs{ \widehat{\nabla}_\sigma w} \ \abs{\nabla{w}}} \ud s
\\ \geq &
- \abs{1 + \theta} \, C_\bA \int_{\Gamma_{h, 0, \rmD}} ( 2 \sigma^{-1} \avg{\abs{\nabla{w}}}^2 + \abs{\jump{w}} \ \avg{\abs{\nabla{w}}} ) \ud s
\\ \geq &
- \abs{1 + \theta} \, C_\bA \left( (2 + \epsilon^{-1}) \int_{\Gamma_{h, 0, \rmD}} \sigma^{-1} \avg{\abs{\nabla{w}}}^2 \ud s
+ 
\epsilon \int_{\Gamma_{h, 0, \rmD}} \sigma \abs{\jump{w}}^2 \ud s \right)
\\ \geq &
- \abs{1 + \theta} \, C_\bA \left( (2 + \epsilon^{-1}) \, C \, \alpha^{-1} \sum_{K \in \mT_h} \int_K \abs{\nabla{w}}^2 \ud x
+ 
\epsilon \int_{\Gamma_{h, 0, \rmD}} \sigma \abs{\jump{w}}^2 \ud s \right)
\end{align*}
for any~$\epsilon > 0$. Finally, for~$T_4$, using the Lipschitz condition~\eqref{eq:asm_A1} together with the fact that~$\abs{\theta} \leq 1$, and subsequently applying Lemma~\ref{lem:inv_trace_inequality}, we obtain
\begin{align*}
T_4 
\geq &
- \abs{\theta} C_\bA \int_{\Gamma_{h, 0, \rmD}} \sigma^{-1} \avg{\abs{\nabla{w}}}^2 \ud s
\\ \geq &
- C_\bA \, C \, \alpha^{-1} \sum_{K \in \mT_h} \int_K \abs{\nabla{w}}^2 \ud x .
\end{align*}
Substituting the above bounds for~$T_1$ to~$T_4$ back into~\eqref{eq:N_alt_2} and recalling that~$C_\bA \geq M_\bA > 0$, we deduce that
\begin{align*}
\hspace{20pt} & \hspace{-20pt} N(w_1; w_1 - w_2) - N(w_2; w_1 - w_2) 
\\
\geq & \left( M_\bA - (2 + \epsilon^{-1}) \, \lambda_\theta \, C_\bA \, C \, \alpha^{-1} \right) \sum_{K \in \mT_h} \int_K \abs{\nabla{w_1} - \nabla{w_2}}^2 \ud x 
\\
& + \left( M_\bA - \lambda_\theta \, C_\bA \, \epsilon \right) \int_{\Gamma_{h, 0, \rmD}} \sigma \abs{\jump{w_1 - w_2}}^2 \ud s ,
\end{align*}
where~$\lambda_\theta = 1 + \abs{1 + \theta}$. Upon selecting $\epsilon = M_\bA / (2 \, \lambda_\theta \, C_\bA)$, we arrive at
\begin{align*}
N(w_1; w_1 - w_2) - N(w_2; w_1 - w_2)
\geq \ &
M_\bA \left( 1 - \frac{\alpha_0}{\alpha} \right)
\sum_{K \in \mT_h} \int_K \abs{\nabla{w_1} - \nabla{w_2}}^2 \ud x
\\ &
+
\frac{1}{2} M_\bA
\int_{\Gamma_{h, 0, \rmD}} \sigma \abs{\jump{w_1 - w_2}}^2 \ud s ,
\end{align*}
where~$\alpha_0 = 2 \, C \left( 1 + \lambda_\theta \, C_\bA / M_\bA \right)^2$. Hence, we have proved~\eqref{eq:strong_monotonicity} with $M_N = M_\bA \, \max ( \frac{1}{2}, \, 1 - \alpha_0 / \alpha )$. We conclude by noting that~$M_N > 0$ whenever~$\alpha > \alpha_0$. 
\end{proof}

With the aid of Lemma~\ref{lem:lipschitz_continuity} and Lemma~\ref{lem:strong_monotonicity}, we are now in the position to prove that the DG approximation~\eqref{eq:dgfem} admits a unique solution~$u_{h, p} \in V_{h, p}$. Necessary and sufficient conditions for existence and uniqueness are provided by Theorem~\ref{thm:nonlinear_infsup} in the Appendix. The following result is an immediate consequence.
\begin{theorem}[Existence and uniqueness] \label{thm:wellposedness_dgfem}
Let~$\theta \in [-1, 1]$ and~$\alpha > \alpha_0 = 2 \, C \, ( 1 + \lambda_\theta \, C_\bA / M_\bA )^2$, where $\lambda_\theta = 1 + \abs{1 + \theta}$ and~$C$ is the constant from Lemma~\ref{lem:inv_trace_inequality}. Then, the DG approximation~\eqref{eq:dgfem} has a unique solution~$u_{h, p} \in V_{h, p}$.
\end{theorem}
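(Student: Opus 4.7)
The plan is to derive the theorem as an essentially formal consequence of the two structural results just established for the semilinear form $N$, combined with the abstract nonlinear inf--sup result Theorem~\ref{thm:nonlinear_infsup} of the Appendix. Since $V_{h, p}$ is finite-dimensional, the norm equivalence~\eqref{eq:norm_equivalence} allows one to work entirely in the $\dgnorm{\cdot}$ topology: Lemma~\ref{lem:lipschitz_continuity} then yields Lipschitz continuity of $N(\cdot; v)$ for each $v \in V_{h, p}$ with constant proportional to $C_N$, and Lemma~\ref{lem:strong_monotonicity} yields strong monotonicity of $N$ on $V_{h, p}$ with the positive constant $M_N$ precisely because of the hypothesis $\alpha > \alpha_0$.

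For uniqueness, I would take two putative solutions $u_{h, p}^{(1)}, u_{h, p}^{(2)} \in V_{h, p}$ of~\eqref{eq:dgfem}, subtract the two variational identities, test with $v = u_{h, p}^{(1)} - u_{h, p}^{(2)} \in V_{h, p}$, and invoke Lemma~\ref{lem:strong_monotonicity} to obtain $M_N \, \dgnorm{u_{h, p}^{(1)} - u_{h, p}^{(2)}}^2 \leq 0$, so the two solutions coincide. For existence, the finite-dimensionality of $V_{h, p}$ becomes essential: I would introduce the residual map $R \colon V_{h, p} \to V_{h, p}^*$ defined by $\langle R(w), v \rangle = N(w; v) - L(v)$, observe that boundedness of $L$ combined with Lipschitz continuity of $N$ gives continuity of $R$ (and hence, after Riesz identification, of the associated map $V_{h, p} \to V_{h, p}$), and use strong monotonicity with a fixed reference point $w_0 \in V_{h, p}$ to show that $\langle R(w), w - w_0 \rangle$ is strictly positive whenever $\dgnorm{w - w_0}$ exceeds a threshold depending on $\dgnorm{R(w_0)}_{V_{h, p}^*}/M_N$. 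A Brouwer-type topological argument — or, equivalently and more cleanly, a direct appeal to Theorem~\ref{thm:nonlinear_infsup} — then produces a zero of $R$, i.e.\ a solution of~\eqref{eq:dgfem}.

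The main obstacle is not the present theorem but the coercivity estimate that feeds it: once Lemma~\ref{lem:strong_monotonicity} secures a strictly positive $M_N$, the existence--uniqueness statement is a formal consequence. Consequently, the only checks I would actually carry out are that the hypotheses of Theorem~\ref{thm:nonlinear_infsup} are met in the Hilbert space $(V_{h, p}, \dgnorm{\cdot})$ — namely continuity of $w \mapsto N(w; v)$ for fixed $v$, strict monotonicity, and coercivity of $w \mapsto N(w; w) - L(w)$ at infinity — each of which is immediate from Lemmas~\ref{lem:lipschitz_continuity}~and~\ref{lem:strong_monotonicity} and the Cauchy--Schwarz bound on $L$.
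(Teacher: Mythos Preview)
Your proposal is correct and matches the paper's approach: the paper simply states that the result is an immediate consequence of the abstract nonlinear inf--sup Theorem~\ref{thm:nonlinear_infsup}, using Lemma~\ref{lem:lipschitz_continuity} for the Lipschitz condition~\eqref{eq:BNB_continuity} and Lemma~\ref{lem:strong_monotonicity} to verify both~\eqref{eq:BNB_1} and~\eqref{eq:BNB_2} (strong monotonicity gives~\eqref{eq:BNB_1} by testing with $v = w_1 - w_2$, and~\eqref{eq:BNB_2} by noting $N(tv; v) \geq N(0; v) + M_N t \dgnorm{v}^2 \to \infty$). Your alternative Brouwer/Browder--Minty route via coercivity of $w \mapsto N(w; w) - L(w)$ is also valid in finite dimensions, but the paper does not take that detour.
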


%------------------------------------------------------------------------------

\section{\emph{A priori} error analysis} \label{section:error_analysis}

We begin by introducing the following $hp$-approximation results.
\begin{lemma} \label{lem:approximation_estimate}
Let $K \in \mT_h$ such that $K = T_K(\hat{K})$, where $\hat{K}$ is either the unit $d$-simplex or the unit $d$-hypercube, and $T_K$ is a $C^{r_K}$-diffeomorphism in compliance with Assumption~\ref{asm:B}(i). For~$s_K \geq 0$, let $v \in H^{s_K}(K)$ and define $t_K = \min(r_K, s_K)$. Then, for $p_K = 1, 2, \dots$, there exists a mapping $\pi_K \colon H^{s_K}(K) \to S_{p_K}(K)$ and a constant~$C$ independent of~$h_K$, $p_K$ and~$v$ such that:
\begin{enumerate}
\item[(i)] for~$0 \leq k \leq t_K$,
\begin{equation*}
\norm{v - \pi_K(v)}_{H^k(K)}
\leq
C \, \frac{h_K^{\mu_K - k}}{p_K^{t_K - k}} \, \norm{v}_{H^{t_K}(K)} \, ;
\end{equation*}
\item[(ii)] for~$0 \leq k + 1/2 < t_K$, and for~$F \in \mF_{h, K}$,
\begin{equation*}
\norm{v - \pi_K(v)}_{H^k(F)}
\leq
C \, \frac{h_K^{\mu_K - k - 1/2}}{p_K^{t_K - k - 1/2}} \, \norm{v}_{H^{t_K}(K)} \, .
\end{equation*}
\end{enumerate}
Here, $\mu_K = \min(p_K + 1, r_K, s_K)$.
\end{lemma}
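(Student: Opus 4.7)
The plan is the standard $hp$-approximation argument: pull back to the reference element, apply a classical polynomial approximation result there, and push the estimate forward to $K$.

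Specifically, I would set $\hat v := v \circ T_K \in H^{s_K}(\hat K)$. Fa\`a di Bruno's formula together with the bounds on $T_K$ and $T_K^{-1}$ from Assumption~\ref{asm:B}(i) yields the scaling relations
\begin{equation*}
C_1 \, h_K^{j - d/2} \, \seminorm{v}_{H^j(K)} \leq \seminorm{\hat v}_{H^j(\hat K)} \leq C_2 \, h_K^{j - d/2} \, \seminorm{v}_{H^j(K)}, \quad 0 \leq j \leq r_K,
\end{equation*}
with $C_1, C_2$ depending only on $d$ and $\beta_1$, and analogous face equivalences with exponent $j - (d-1)/2$. On the reference element I would invoke a Babu\v{s}ka--Suri type $hp$-projection operator $\hat\pi_{p_K} \colon H^{t_K}(\hat K) \to \hat S_{p_K}(\hat K)$, which delivers the sharp estimate
\begin{equation*}
\seminorm{\hat v - \hat\pi_{p_K}(\hat v)}_{H^j(\hat K)} \leq C \, p_K^{-(t_K - j)} \, \seminorm{\hat v}_{H^{t_K}(\hat K)}, \quad 0 \leq j \leq t_K,
\end{equation*}
valid on both the unit simplex and the unit hypercube. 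Defining $\pi_K(v) := \hat\pi_{p_K}(\hat v) \circ T_K^{-1}$, one has $\pi_K(v) \in S_{p_K}(K)$ by construction; transporting the reference-element bound back to $K$ via the scaling relations and summing seminorms up to order $k$ produces (i), with the regularity cap $\mu_K = \min(p_K + 1, r_K, s_K)$ encoding simultaneously the limitation imposed by the polynomial degree of the approximation space, the smoothness $r_K$ of the mapping $T_K$, and the intrinsic regularity $s_K$ of $v$.

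For assertion~(ii), I would apply the multiplicative trace inequality of Lemma~\ref{lem:trace_inequality} to $v - \pi_K(v)$ and insert the bounds from~(i) at levels $k$ and $k+1$; the restriction $k + 1/2 < t_K$ is precisely what ensures that the level $k+1$ estimate is available, and the geometric-mean term in the trace inequality dominates and produces the claimed $h_K^{\mu_K - k - 1/2}/p_K^{t_K - k - 1/2}$ rate. The principal technical obstacle lies not in the $p$-version estimate on $\hat K$, which is classical, but in tracking the chain-rule contributions generated by the possibly curved map $T_K$: Assumption~\ref{asm:B}(i) supplies $C^{r_K}$-regularity with uniform bounds, and the $h_K$- and $p_K$-independence of all constants then follows by the familiar Ciarlet--Raviart bookkeeping of the Fa\`a di Bruno coefficients up to order $r_K$.
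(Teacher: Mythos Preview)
Your proposal is correct and follows essentially the same route as the paper, which simply cites Babu\v{s}ka--Suri for the affine reference-element estimate and Ciarlet--Raviart for the transfer to non-affine elements via Assumption~\ref{asm:B}(i); you have spelled out precisely that argument. One minor difference: for part~(ii) the paper relies on the face estimate being part of the cited Babu\v{s}ka--Suri result (proved directly on $\hat K$), whereas you derive it by combining the volume estimate~(i) at levels $k$ and $k+1$ with the multiplicative trace inequality of Lemma~\ref{lem:trace_inequality}---this works and gives the same rate, but note it formally requires $k+1 \leq t_K$, which is marginally stronger than the stated hypothesis $k + 1/2 < t_K$.
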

\begin{proof}
We refer to the proof of Lemma~4.5 in~\cite{BabuskaSuri1987} for the case that~$K$ is an affine image of the unit triangle or unit quadrilateral. The generalization to non-affine triangles and quadrilaterals follows \emph{mutatis mutandis} by proceeding similarly as in the proof of Theorem~1 of~\cite{CiarletRaviart1972} while making use of~\cite[Lemma~4.1]{BabuskaSuri1987}, and subsequently exploiting Assumption~\ref{asm:B}(i). The argument for simplices and hypercubes of dimension $d > 2$ is completely analogous.
\end{proof}

\begin{corollary} \label{crl:approximation_estimate_dgnorm}
For $s > 3/2$, let~$\Pi_{h, p} \colon H^s(\Omega, \mT_h) \to V_{h, p}$ such that $\left. \Pi_{h, p}(\cdot) \right|_K = \pi_K(\cdot)$ for~$K \in \mT_h$, where~$\pi_K$ is the mapping from Lemma~\ref{lem:approximation_estimate}. Moreover, let~$v \in H^s(\Omega, \mT_h)$ with~$\left. v \right|_K \in H^{s_K}(K)$, $s_K \geq s$, $K \in \mT_h$, and select $\alpha > 0$. There exists a constant~$C$ such that 
\begin{equation*}
\dgnorm{v - \Pi_{h, p}(v)}_+
\leq
C \left( \sum_{K \in \mT_h} \frac{h_K^{2 \mu_K - 2}}{p_K^{2 t_K - 3}} \, \norm{v}_{H^{t_K}(K)}^2 \right)^{1/2} ,
\end{equation*}
where~$t_K = \min(r_K, s_K)$ and~$\mu_K = \min(p_K + 1, r_K, s_K)$.
\end{corollary}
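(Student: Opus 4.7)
Let $e := v - \Pi_{h,p}(v)$. The plan is to decompose $\dgnorm{e}_+^2$ into its three natural contributions
\[
\dgnorm{e}_+^2
=
\underbrace{\sum_{K \in \mT_h} \norm{\nabla e}_{L^2(K)}^2}_{(\mathrm{I})}
+
\underbrace{\int_{\Gamma_{h,0,\rmD}} \sigma \abs{\jump{e}}^2 \ud s}_{(\mathrm{II})}
+
\underbrace{\int_{\Gamma_{h,0,\rmD}} \sigma^{-1} \avg{\abs{\nabla e}}^2 \ud s}_{(\mathrm{III})},
\]
bound each term using Lemma~\ref{lem:approximation_estimate}, and then identify the dominant power of $p_K$. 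Since $\Pi_{h,p}$ acts element-wise via $\pi_K$, every contribution can be reduced to a sum over $K \in \mT_h$ of local approximation errors.

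For term~$(\mathrm{I})$, I would simply invoke Lemma~\ref{lem:approximation_estimate}(i) with $k=1$ on each element, producing the local factor $h_K^{2\mu_K-2}/p_K^{2t_K-2}$. For term~$(\mathrm{II})$, on a face $F \in \mF_{h,K}$ I would use Lemma~\ref{lem:approximation_estimate}(ii) with $k=0$ to get $\norm{e}_{L^2(F)}^2 \le C \, h_K^{2\mu_K-1}/p_K^{2t_K-1} \norm{v}_{H^{t_K}(K)}^2$, then absorb the weight $\left.\sigma\right|_F = \alpha \, p_F^2/\mu_F$. Using Assumption~\ref{asm:B}(i)--(iii), one has $\mu_F \geq c\, h_K$ and $p_F \leq c'\, p_K$ (up to constants depending only on $\beta_1,\beta_2,\beta_3$), so $\sigma \le C\, p_K^2/h_K$, and the face contribution is bounded by $C\, h_K^{2\mu_K-2}/p_K^{2t_K-3}$. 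For term~$(\mathrm{III})$, again by Assumption~\ref{asm:B} one has $\sigma^{-1} \le C\, h_K/p_K^2$; then Lemma~\ref{lem:approximation_estimate}(ii) with $k=1$ (legitimate because $t_K \geq s > 3/2$) yields $\norm{\nabla e}_{L^2(F)}^2 \le C\, h_K^{2\mu_K-3}/p_K^{2t_K-3} \norm{v}_{H^{t_K}(K)}^2$, which after multiplication by $\sigma^{-1}$ gives $C\, h_K^{2\mu_K-2}/p_K^{2t_K-1}$.

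Summing over $K \in \mT_h$ and the bounded-valence collection $\mF_{h,K}$ (Assumption~\ref{asm:B}(ii) ensures $\card(\mF_{h,K})$ is uniformly bounded, so face sums recast as element sums up to a multiplicative constant), and observing that among the three exponents $2t_K-2$, $2t_K-3$, $2t_K-1$ the smallest is $2t_K-3$ (so the jump term $(\mathrm{II})$ dominates in the $p$-asymptotic), I arrive at the claimed bound
\[
\dgnorm{e}_+^2
\le
C \sum_{K \in \mT_h} \frac{h_K^{2\mu_K-2}}{p_K^{2t_K-3}} \, \norm{v}_{H^{t_K}(K)}^2 .
\]

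The only mildly delicate point is the bookkeeping on interior faces, where $\sigma$, $\avg{\cdot}$, and the face contributes to \emph{both} adjacent elements; the uniform-grading condition and the bounded local variation of $\{p_K\}$ guarantee that writing every factor in terms of either neighbor's $h_K, p_K$ costs only a constant depending on $\beta_1,\beta_2,\beta_3$. There are no real obstacles beyond this: Lemma~\ref{lem:approximation_estimate} has been set up in exactly the right form, and the $s > 3/2$ hypothesis is precisely what is needed to apply part~(ii) with $k=1$ for the gradient trace in $(\mathrm{III})$.
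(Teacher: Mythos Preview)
Your proposal is correct and follows essentially the same route as the paper: decompose $\dgnorm{e}_+^2$ into the volume, jump, and gradient-average contributions, use Assumption~\ref{asm:B} to replace $\sigma$ and $\sigma^{-1}$ by $p_K^2/h_K$ and $h_K/p_K^2$ up to constants, apply Lemma~\ref{lem:approximation_estimate} with $k=1$, $k=0$, and $k=1$ respectively, and observe that the jump term carries the worst $p$-exponent $2t_K-3$. The paper also singles out $\card(\mF_{h,K}) \le C_4$ to convert face sums into element sums, exactly as you do.
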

\begin{proof}
Consider $K \in \mT_h$ and~$F \in \mF_{h, K}$. From Assumption~\ref{asm:B} it follows that there exists positive constants~$C_1 \equiv C_1(d, \beta_1, \beta_2)$ and~$C_2 \equiv C_2(d, \beta_3)$ such that $C_1^{-1} h_K \leq \mu_F \leq C_1 \, h_K$ and $C_2^{-1} p_K^2 \leq p_F^2 \leq C_2 \, p_K^2$. Hence,
\begin{equation*}
\alpha \, C_3^{-1} \, \frac{p_K^2}{h_K} \leq \left. \sigma \right|_F \leq \alpha \, C_3 \, \frac{p_K^2}{h_K} ,
\end{equation*}
where $C_3 = C_2 / C_1$. Accordingly, by Young's inequality, we have that, for~$\eta = v - \Pi_{h, p}(v)$,
\begin{align*}
\dgnorm{\eta}_+^2 
= \ & 
\sum_{K \in \mT_h} \int_K \abs{\nabla{\eta}}^2 \ud x
+
\int_{\Gamma_{h, 0, \rmD}} \sigma \abs{\jump{\eta}}^2 \ud s
+
\int_{\Gamma_{h, 0, \rmD}} \sigma^{-1} \avg{\abs{\nabla{\eta}}}^2 \ud s 
\\ \leq \ &
\sum_{K \in \mT_h} \left( \norm{\eta}_{H^1(K)}^2
+
\sum_{F \in \mF_{h, K}} \!\! \left( 2 \alpha C_3 \frac{p_K^2}{h_K} \norm{\eta}_{L^2(F)}^2 
+
\alpha^{-1} C_3 \frac{h_K}{p_K^2} \norm{\eta}_{H^1(F)}^2 \right) \right) .
\end{align*}
Here, in view of the approximation estimates from Lemma~\ref{lem:approximation_estimate},
\begin{align*}
\norm{\eta}_{H^1(K)}^2 \leq & \ C \, \frac{h_K^{2 \mu_K - 2}}{p_K^{2 t_K - 2}} \, \norm{v}_{H^{t_K}(K)}^2 , 
\\
\norm{\eta}_{L^2(F)}^2 \leq & \ C \, \frac{h_K^{2 \mu_K - 1}}{p_K^{2 t_K - 1}} \, \norm{v}_{H^{t_K}(K)}^2 , 
\\
\norm{\eta}_{H^1(F)}^2 \leq & \ C \frac{h_K^{2 \mu_K - 3}}{p_K^{2 t_K - 3}} \norm{v}_{H^{t_K}(K)}^2 .
\end{align*}
Hence, 
\begin{equation*}
\dgnorm{\eta}_+^2 
\leq 
C \sum_{K \in \mT_h} \left( \frac{h_K^{2 \mu_K - 2}}{p_K^{2 t_K - 2}} 
+
\alpha \, C_3 \, C_4 \frac{h_K^{2 \mu_K - 2}}{p_K^{2 t_K - 3}}
+
\alpha^{-1} C_3 \, C_4 \frac{h_K^{2 \mu_K - 2}}{p_K^{2 t_K - 1}} \right) \norm{v}_{H^{t_K}(K)}^2 ,
\end{equation*}
where $C_4 = \max_{K \in \mT_h} \left( \card(\mF_{h, K}) \right)$. % Taking the square root on both sides then yields the stated result.
\end{proof}

Using the $hp$-approximation estimate from Corollary~\ref{crl:approximation_estimate_dgnorm}, we prove the following \emph{a priori} error bound.
\begin{theorem} \label{thm:dgnorm_error_estimate}
Let~$u$ denote the solution to~\eqref{eq:model_problem} and suppose that~$u \in H^s(\Omega) \cap C^0(\Omega)$, $s > 3/2$, with $\left. u \right|_K \in H^{s_K}(K)$, $s_K \geq s$, $K \in \mT_h$. Furthermore, let $\theta \in [-1, 1]$ and $\alpha > \alpha_0$, with~$\alpha_0$ as in Lemma~\ref{lem:strong_monotonicity}. Then, denoting by~$u_{h, p} \in V_{h, p}$ the solution to~\eqref{eq:dgfem}, there exists a constant~$C$ such that 
\begin{equation} \label{eq:dgnorm_error_estimate}
\dgnorm{u - u_{h, p}}_+ \leq C \left( \sum_{K \in \mT_h} \frac{h_K^{2 \mu_K - 2}}{p_K^{2 t_K - 3}} \, \norm{u}_{H^{t_K}(K)}^2 \right)^{1/2} ,
\end{equation}
where~$t_K = \min(r_K, s_K)$ and~$\mu_K = \min(p_K + 1, r_K, s_K)$.
\end{theorem}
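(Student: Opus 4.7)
The plan is to use the familiar Strang-type argument, splitting the error through the interpolant $\Pi_{h,p}u$ supplied by Corollary~\ref{crl:approximation_estimate_dgnorm}. Write $u - u_{h,p} = \eta + \xi$ with $\eta := u - \Pi_{h,p}u$ and $\xi := \Pi_{h,p}u - u_{h,p} \in V_{h,p}$. Since the triangle inequality gives $\dgnorm{u - u_{h,p}}_+ \leq \dgnorm{\eta}_+ + \dgnorm{\xi}_+$, and Corollary~\ref{crl:approximation_estimate_dgnorm} already bounds $\dgnorm{\eta}_+$ by exactly the right-hand side of~\eqref{eq:dgnorm_error_estimate}, it suffices to bound $\dgnorm{\xi}_+$ by a constant times $\dgnorm{\eta}_+$.

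To do this, I would first exploit that $\xi \in V_{h,p}$, so the norm equivalence~\eqref{eq:norm_equivalence} yields $\dgnorm{\xi}_+^2 \leq (1 + C\alpha^{-1}) \dgnorm{\xi}^2$. Hence it is enough to control $\dgnorm{\xi}$. Apply the strong monotonicity estimate from Lemma~\ref{lem:strong_monotonicity} with $w_1 = \Pi_{h,p}u$ and $w_2 = u_{h,p}$, both of which lie in $V_{h,p}$, to obtain
\begin{equation*}
M_N \, \dgnorm{\xi}^2 \leq N(\Pi_{h,p}u;\xi) - N(u_{h,p};\xi).
\end{equation*}
Next, invoke Galerkin orthogonality (Lemma~\ref{lem:galerkin_orthogonality}), which is applicable because $u \in H^s(\Omega) \cap C^0(\Omega)$ with $s > 3/2$ and $\xi \in V_{h,p}$, to replace $N(u_{h,p};\xi)$ by $N(u;\xi)$. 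This gives
\begin{equation*}
M_N \, \dgnorm{\xi}^2 \leq N(\Pi_{h,p}u;\xi) - N(u;\xi),
\end{equation*}
and now Lemma~\ref{lem:lipschitz_continuity} (Lipschitz continuity), which holds on the full sum space $V(h,p)$, bounds the right-hand side by $C_N \, \dgnorm{\eta}_+ \, \dgnorm{\xi}_+$.

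Combining these pieces gives $\dgnorm{\xi}^2 \leq (C_N / M_N) \, \dgnorm{\eta}_+ \, \dgnorm{\xi}_+$, into which I substitute the norm-equivalence bound $\dgnorm{\xi}_+ \leq (1 + C\alpha^{-1})^{1/2} \dgnorm{\xi}$ and cancel one power of $\dgnorm{\xi}$. This yields $\dgnorm{\xi}_+ \leq C \dgnorm{\eta}_+$ with a constant depending on $\theta$, $C_\bA$, $M_\bA$, $\alpha_0/\alpha$ and the constant from Lemma~\ref{lem:inv_trace_inequality}. The desired estimate~\eqref{eq:dgnorm_error_estimate} then follows from Corollary~\ref{crl:approximation_estimate_dgnorm}.

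The only delicate point is that strong monotonicity is available only on $V_{h,p}$ (since its proof uses the discrete inverse-trace inequality of Lemma~\ref{lem:inv_trace_inequality}), whereas Lipschitz continuity is valid on all of $V(h,p)$. The argument is designed precisely around this asymmetry: the error quantity $\xi$ to which monotonicity is applied is discrete by construction, while the approximation error $\eta$, which need not lie in $V_{h,p}$, only ever enters through Lipschitz continuity. No further technicality is expected; the chain of inequalities above is routine once this allocation of the two properties is set.
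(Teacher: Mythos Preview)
Your proof is correct and follows essentially the same route as the paper: split $u-u_{h,p}=\eta+\xi$ through $\Pi_{h,p}u$, apply strong monotonicity to the discrete part $\xi$, use Galerkin orthogonality to swap $N(u_{h,p};\xi)$ for $N(u;\xi)$, bound by Lipschitz continuity, and finish with the norm equivalence and Corollary~\ref{crl:approximation_estimate_dgnorm}. Your closing remark about the asymmetry between where monotonicity and Lipschitz continuity are needed is exactly the point that makes the argument go through.
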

\begin{proof}
Denote by~$\Pi_{h, p} \colon H^s(\Omega, \mT_h) \to V_{h, p}$ the mapping from Corollary~\ref{crl:approximation_estimate_dgnorm}, and let us write~$u - u_{h, p} = \eta + \xi$, where~$\eta = u - \Pi_{h, p}(u)$ and~$\xi = \Pi_{h, p}(u) - u_{h, p}$. Using Lemma~\ref{lem:strong_monotonicity}, the Galerkin-orthogonality property~\eqref{eq:galerkin_orthogonality} and Lemma~\ref{lem:lipschitz_continuity}, we have that
\begin{align*}
M_N \, \dgnorm{\xi}^2
\leq \ &
N(\Pi_{h, p}(u); \xi) - N(u_{h, p}; \xi)
\\ \leq \ &
N(\Pi_{h, p}(u); \xi) - N(u; \xi) 
\\ \leq \ &
C_N \, \dgnorm{\eta}_+ \, \dgnorm{\xi}_+ .
\end{align*}
Since~$\xi \in V_{h, p}$, we note from~\eqref{eq:norm_equivalence} that there exists a constant~$C$ such that~$\dgnorm{\xi}_+^2 \leq C \, \dgnorm{\xi}^2$. Hence,
\begin{equation*}
\dgnorm{\xi}_+ \leq C \, \frac{C_N}{M_N} \, \dgnorm{\eta}_+ ,
\end{equation*}
and therefore, by the triangle inequality,
\begin{equation*}
\dgnorm{u - u_{h, p}}_+ 
\leq 
\dgnorm{\eta}_+ + \dgnorm{\xi}_+
\leq \left( 1 + C \, \frac{C_N}{M_N} \right) \dgnorm{\eta}_+ .
\end{equation*}
The estimate~\eqref{eq:dgnorm_error_estimate} then follows by applying Corollary~\ref{crl:approximation_estimate_dgnorm}.
\end{proof}

We remark that the error estimate obtained in Theorem~\ref{thm:dgnorm_error_estimate} displays the same quasi-optimality as the error estimates obtained for interior penalty DG approximations of linear elliptic problems; cf., for example, \cite[Theorem~4.5]{HoustonEtAl2002}. That is, provided that $r_K \geq s_K \geq p_K + 1$ for all~$K \in \mT_h$, the estimate~\eqref{eq:dgnorm_error_estimate} is optimal in~$h$ and slightly suboptimal in~$p$, by half an order in~$p$. Here, the condition that~$r_K \geq s_K$ for all~$K \in \mT_h$ reflects the dependence of the estimates on the regularity of the mappings~$\{ T_K \}_{K \in \mT_h}$, and stresses the importance of proper mesh design, especially when curved elements are used; cf.~\cite{CiarletRaviart1972}.

Next, let~$\psi \in L^2(\Omega)$ and consider the linear functional $J_\psi(w) = (\psi, w)_\Omega$, where $w \in V(h, p)$ and $(\cdot, \cdot)_\Omega$ denotes the $L^2(\Omega)$~inner product. We shall now be concerned with obtaining a bound for the error $J_\psi(u) - J_\psi(u_{h, p})$. The analysis is based on a duality argument and relies on Fr\'{e}chet differentiability of the map~$\bv \mapsto \bA(\bv) \bv \colon \mathbb{R}^d \to \mathbb{R}^d$ with respect to~$\bv$.
Accordingly, if the limit exits, let us denote by
\begin{equation} \label{eq:da}
\ba'(\bq; \bw) := \lim_{t \to 0} \frac{\bA(\bq + t \bw) (\bq + t \bw) - \bA(\bq) \bq}{t} , \qquad \bq, \bw \in \mathbb{R}^d ,
\end{equation}
the derivative of the map $\bv \mapsto \bA(\bv) \bv \colon \mathbb{R}^d \to \mathbb{R}^d$ at~$\bq$ in the direction~$\bw$. Thanks to Assumption~\ref{asm:A} we are able to make the following claim.
\begin{lemma} \label{lem:frechet_differentiability}
Let $\bA$ satisfy the Lipschitz condition~\eqref{eq:asm_A1} of Assumption~\ref{asm:A}. Then, the map~$\bv \mapsto \bA(\bv) \bv \colon \mathbb{R}^d \to \mathbb{R}^d$ is Fr\'{e}chet differentiable almost everywhere. That is, for almost every~$\bq \in \mathbb{R}^d$, we have that:
\begin{enumerate}
\item[(i)] the limit~\eqref{eq:da} exists for all~$\bw \in \mathbb{R}^d$;
\item[(ii)] the mapping~$\bw \mapsto \ba'(\bq; \bw) \colon \mathbb{R}^d \to \mathbb{R}^d$ is linear and continuous;
\item[(iii)] $\ba'(\bq; \bw) = \bA(\bq + \bw) (\bq + \bw) - \bA(\bq) \bq + o(\abs{\bw})$ as $\bw \to \mathbf{0}$ in $\mathbb{R}^d$.
\end{enumerate}
\end{lemma}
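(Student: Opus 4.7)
The plan is to reduce the three assertions to Rademacher's theorem applied to the map $\bF \colon \mathbb{R}^d \to \mathbb{R}^d$ defined by $\bF(\bv) := \bA(\bv)\,\bv$. By the Lipschitz condition~\eqref{eq:asm_A1} of Assumption~\ref{asm:A}, $\bF$ is globally Lipschitz with constant $C_\bA$, and Rademacher's theorem therefore asserts that $\bF$ is Fr\'echet differentiable at almost every $\bq \in \mathbb{R}^d$. Let $\mO \subset \mathbb{R}^d$ denote the set of full measure on which $\bF$ is differentiable; all three claims will be established for $\bq \in \mO$.

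First I would address (i). Fix $\bq \in \mO$ and let $D\bF(\bq) \in \mathbb{R}^{d \times d}$ denote the Fr\'echet derivative. By definition,
\begin{equation*}
\bF(\bq + \bw) - \bF(\bq) = D\bF(\bq)\,\bw + o(\abs{\bw}) \qquad \text{as } \bw \to \mathbf{0} .
\end{equation*}
Applying this with $\bw$ replaced by $t\bw$ for fixed $\bw \in \mathbb{R}^d$ and letting $t \to 0$, the linearity of $D\bF(\bq)$ yields
\begin{equation*}
\lim_{t \to 0} \frac{\bF(\bq + t\bw) - \bF(\bq)}{t} = D\bF(\bq)\,\bw ,
\end{equation*}
so the limit defining $\ba'(\bq; \bw)$ in~\eqref{eq:da} exists for every $\bw \in \mathbb{R}^d$ and equals $D\bF(\bq)\,\bw$.

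Claim (ii) is then immediate from this identification: $\bw \mapsto D\bF(\bq)\,\bw$ is linear by construction, and any linear map between finite-dimensional normed spaces is automatically continuous. For (iii), I would combine the Fr\'echet expansion above with the identification obtained in (i):
\begin{equation*}
\bA(\bq + \bw)(\bq + \bw) - \bA(\bq)\,\bq = \bF(\bq + \bw) - \bF(\bq) = D\bF(\bq)\,\bw + o(\abs{\bw}) = \ba'(\bq; \bw) + o(\abs{\bw}) ,
\end{equation*}
which, after rearrangement, is precisely the asserted expansion.

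The only non-trivial ingredient in this argument is Rademacher's theorem, which legitimately converts the pointwise Lipschitz bound of Assumption~\ref{asm:A} into almost-everywhere Fr\'echet differentiability; once this is in hand, the three claims are essentially a restatement of the definition of the Fr\'echet derivative together with the observation that on $\mathbb{R}^d$ the directional and Fr\'echet derivatives agree. I expect no significant obstacle beyond recording the appeal to Rademacher's theorem and properly identifying the Gateaux-type limit in~\eqref{eq:da} with $D\bF(\bq)\,\bw$.
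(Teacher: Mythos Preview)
Your proposal is correct and follows exactly the paper's approach: the paper's proof consists of the single sentence that the lemma is an immediate consequence of Rademacher's Theorem (with a reference to Evans--Gariepy), and you have simply spelled out that consequence in detail. There is nothing to add.
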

\begin{proof}
The lemma is an immediate consequence of Rademacher's Theorem; see, for example, \cite[Section~3.1.2]{EvansGariepy1992}.
\end{proof}

For simplicity of presentation, and without loss of generality, we henceforth assume that the map~$\bv \mapsto \bA(\bv) \bv \colon \mathbb{R}^d \to \mathbb{R}^d$ is \emph{everywhere} Fr\'{e}chet differentiable in~$\mathbb{R}^d$, and we refer to Remark~\ref{rem:regularization} below for further discussion. Then, for $\bq \in \mathbb{R}^d$, let $\bA^\ast(\bq) \in \mathbb{R}^{d, d}$ such that~$\bA^\ast(\bq) \bv \cdot \bw = \ba'(\bq; \bw) \cdot \bv$ for all~$\bv, \bw \in \mathbb{R}^d$. Given~$\psi \in L^2(\Omega)$, we introduce the dual problem: find~$z \colon \Omega \to \mathbb{R}$ such that
\begin{subequations} \label{eq:dual_problem}
\begin{alignat}{2}
- \nabla{} \cdot \left( \bA^\ast(\nabla{u}) \nabla{z} \right) =  \ & \psi & \qquad & \text{in} \ \Omega , \label{eq:dual_problem_pde}
\\
z = \ & 0 & \qquad & \text{on} \ \Gamma_\rmD , \label{eq:dual_problem_bcD}
\\
\bA^\ast(\nabla{u}) \nabla{z} \cdot \bn = \ & 0 & \qquad & \text{on} \ \Gamma_\rmN . \label{eq:dual_problem_bcN}
\end{alignat}
\end{subequations}
Using Assumption~\ref{asm:A}, it is easy verify that~$\abs{\bA^\ast(\bq) \bv} \leq C_\bA \abs{\bv}$ and $\bA^\ast(\bq) \bv \cdot \bv \geq M_\bA \abs{\bv}^2$ for all~$\bq, \bv \in \mathbb{R}^d$, where~$C_\bA$ and~$M_\bA$ are the constants from~\eqref{eq:asm_A1} and~\eqref{eq:asm_A2}. 
%
% Indeed, by the Lipschitz condition~\eqref{eq:asm_A1}, we deduce that
% \begin{align*}
% \abs{\bA^\ast(\bq) \bv} 
% %
% \leq \ &
% %
% \sup_{\bw \in \mathbb{R}^d \setminus \{0\}} \frac{\bA^\ast(\bq) \bv \cdot \bw}{\abs{\bw}}
% %
% \\ = \ &
% %
% \sup_{\bw \in \mathbb{R}^d \setminus \{0\}} \frac{\ba'(\bq; \bw) \cdot \bv}{\abs{\bw}}
% %
% \\ = \ &
% %
% \sup_{\bw \in \mathbb{R}^d \setminus \{0\}} \lim_{t \to 0} \frac{(\ba(\bq + t \bw) - \ba(\bq)) \cdot \bv}{t \abs{\bw}}
% %
% \\ \leq \ &
% %
% \sup_{\bw \in \mathbb{R}^d \setminus \{0\}} \lim_{t \to 0} \frac{ \abs{\ba(\bq + t \bw) - \ba(\bq)} \ \abs{\bv}}{t \abs{\bw}}
% %
% \\ \leq \ &
% %
% C_\bA \, \abs{\bv} ,
% \end{align*}
% and using the monotonicity condition~\eqref{eq:asm_A2}, it follows that
% \begin{align*}
% \bA^\ast(\bq) \bv \cdot \bv 
% %
% = \ &
% % 
% \ba'(\bq; \bv) \cdot \bv
% %
% \\ = \ &
% %
% \lim_{t \to 0} \frac{(\ba(\bq + t \bv) - \ba(\bq)) \cdot \bv }{t}
% %
% \\ \geq \ &
% %
% M_\bA \, \abs{\bv}^2 .
% \end{align*}
%
Hence, by the Lax-Milgram theorem we deduce that~\eqref{eq:dual_problem} has a unique weak solution~$z \in H^1(\Omega)$. In what follows, we shall assume slightly stronger regularity by supposing that there exists a strong solution $z \in H^2(\Omega)$ satisfying
\begin{equation} \label{eq:dual_regularity}
\norm{z}_{H^2(\Omega)} \leq C \norm{\psi}_{L^2(\Omega)} .
\end{equation}
From~\cite[Theorem~8.12]{GilbargTrudinger1983}, we note that this is satisfied if $\partial \Omega$ is of class~$C^2$ with~$\Gamma_\rmN = \emptyset$, and if~$\bA^\ast(\nabla{u}) \in [C^{0, 1}(\closure{\Omega})]^{d, d}$.

With the aid of the dual problem~\eqref{eq:dual_problem} we are able to derive the following \emph{a priori} bound for the error~$J_\psi(u) - J_\psi(u_{h, p})$.
\begin{theorem} \label{thm:functional_error_estimate}
Consider the same premises as in Theorem~\ref{thm:dgnorm_error_estimate}. Furthermore, assume that the map~$\bv \mapsto \bA(\bv) \bv \colon \mathbb{R}^d \to \mathbb{R}^d$ is everywhere Fr\'{e}chet differentiable in $\mathbb{R}^d$, and given $\psi \in L^2(\Omega)$, suppose that the dual problem~\eqref{eq:dual_problem} has a strong solution~$z \in H^2(\Omega)$ with $\left. z \right|_K \in H^{\ell_K}(K)$, $\ell_K \geq 2$, $K \in \mT_h$. Then, there exists a constant~$C$ such that
\begin{align}
J_\psi(u) - J_\psi(u_{h, p}) 
\leq &
\ C \left( \sum_{K \in \mT_h} \frac{h_K^{2 \mu_K - 2}}{p_K^{2 t_K - 3}} \norm{u}_{H^{t_K}(K)}^2 \right)^{\!\!1/2} 
\nonumber \\
& \times \! \left(
\left( \sum_{K \in \mT_h} \frac{h_K^{2 \lambda_K - 2}}{p_K^{2 m_K - 3}} \norm{z}_{H^{m_K}(K)}^2 \right)^{\!\!1/2} 
+
\frac{1 + \theta}{\sqrt{\alpha}} \, \norm{z}_{H^2(\Omega)} \! \right) + R , \label{eq:functional_error_estimate}
\end{align}
where $t_K = \min(r_K, s_K)$, $m_K = \min(r_K, \ell_K)$, $\mu_K = \min(p_K + 1, r_K, s_K)$, $\lambda_K = \min(p_K + 1, r_K, \ell_K)$, and where $R = o( \dgnorm{u - u_{h, p}}_+ ) \, \norm{z}_{H^2(\Omega)}$. Moreover, if the map~$\bv \mapsto \bA(\bv) \bv \colon \mathbb{R}^d \to \mathbb{R}^d$ is twice continuously differentiable everywhere in~$\mathbb{R}^d$, then there exists a constant~$C$ such that
\begin{equation}
R \leq C \max_{K \in \mT_h} \left( \frac{p_K^{3/2}}{h_K^{d/2}} \right) \left( \sum_{K \in \mT_h} \frac{h_K^{2 \mu_K - 2}}{p_K^{2 t_K - 3}} \norm{u}_{H^{t_K}(K)}^2 \! \right) \norm{z}_{H^2(\Omega)} . \label{eq:functional_error_estimate_Rbound}
\end{equation}
\end{theorem}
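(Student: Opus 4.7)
The plan is a nonlinear variant of the Aubin--Nitsche duality argument. Writing $e = u - u_{h,p}$ and noting that $\jump{u} = 0$ forces $\jump{e} = -\widehat{\jump{u_{h,p}}}$, where $\widehat{\jump{v}}$ agrees with $\jump{v}$ on $\mF_{h,0}$ and equals $(v - g_\rmD)\bn$ on $\mF_{h,\rmD}$, I would first test the strong form of \eqref{eq:dual_problem} against $e$ and integrate by parts element-by-element. Using $\jump{z} = 0$ and $\nabla z$ continuous (both from $z \in H^2(\Omega)$), together with $z = 0$ on $\Gamma_\rmD$ and $\bA^\ast(\nabla u)\nabla z \cdot \bn = 0$ on $\Gamma_\rmN$, this yields
\begin{equation*}
J_\psi(u) - J_\psi(u_{h,p}) = \sum_{K \in \mT_h} \int_K \bA^\ast(\nabla u)\nabla z \cdot \nabla e \ud x + \int_{\Gamma_{h,0,\rmD}} \avg{\bA^\ast(\nabla u)\nabla z} \cdot \widehat{\jump{u_{h,p}}} \ud s .
\end{equation*}

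The next step is to re-express the volume integral through $N(\cdot\,;\cdot)$. Applying Lemma~\ref{lem:frechet_differentiability}(iii) with base point $\nabla u$ and increment $-\nabla e$ yields $\bA^\ast(\nabla u)\nabla z \cdot \nabla e = [\bA(\nabla u)\nabla u - \bA(\nabla u_{h,p})\nabla u_{h,p}] \cdot \nabla z + r_1$ with $r_1 = o(\abs{\nabla e}) \abs{\nabla z}$. Using the alternative form~\eqref{eq:N_alt} for $v = z$ continuous, combined with the fact that $\widehat{\nabla}_\sigma u = \nabla u$ for the exact solution, the remaining volume term rewrites as $N(u;z) - N(u_{h,p};z) + \theta \int_{\Gamma_{h,0,\rmD}} \sigma^{-1} \avg{[\bA(\nabla u_{h,p})\nabla u_{h,p} - \bA(\widehat{\nabla}_\sigma u_{h,p})\widehat{\nabla}_\sigma u_{h,p}] \cdot \nabla z} \ud s$. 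A second application of Lemma~\ref{lem:frechet_differentiability}(iii), this time at $\nabla u_{h,p}$ with increment $-\sigma \widehat{\jump{u_{h,p}}}$ and then a Lipschitz swap of the base point from $\nabla u_{h,p}$ to $\nabla u$, collapses this $\theta$-integral to $\theta \int_{\Gamma_{h,0,\rmD}} \avg{\bA^\ast(\nabla u)\nabla z} \cdot \widehat{\jump{u_{h,p}}} \ud s$ plus a higher-order remainder $r_2$. Combining with the surface term in the preceding display produces the signature contribution $(1+\theta) \int_{\Gamma_{h,0,\rmD}} \avg{\bA^\ast(\nabla u)\nabla z} \cdot \widehat{\jump{u_{h,p}}} \ud s$, which vanishes precisely for the symmetric choice $\theta = -1$.

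To finish, $N(u;z) - N(u_{h,p};z)$ is handled by the usual dual-argument trick: Galerkin orthogonality~\eqref{eq:galerkin_orthogonality} permits replacing $z$ by $z - \Pi_{h,p}(z)$, Lemma~\ref{lem:lipschitz_continuity} produces $C_N \, \dgnorm{e}_+ \, \dgnorm{z - \Pi_{h,p}(z)}_+$, and Corollary~\ref{crl:approximation_estimate_dgnorm} supplies the approximation factor involving $h_K^{\lambda_K - 1}/p_K^{m_K - 3/2}$ and $\norm{z}_{H^{m_K}(K)}$. The $(1+\theta)$-term is bounded via Cauchy--Schwarz together with Lemma~\ref{lem:trace_inequality} applied to $\nabla z \in [H^1(\Omega)]^d$, yielding $\int_{\Gamma_{h,0,\rmD}} \sigma^{-1} \avg{\abs{\nabla z}}^2 \ud s \leq C \alpha^{-1} \norm{z}_{H^2(\Omega)}^2$, while its companion factor is $\dgnorm{e}$ via $\int \sigma \, \abs{\widehat{\jump{u_{h,p}}}}^2 \ud s \leq \dgnorm{e}^2$. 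Combining these estimates with Theorem~\ref{thm:dgnorm_error_estimate} applied to $\dgnorm{e}_+$ produces~\eqref{eq:functional_error_estimate} up to $R$.

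The remainder $R$ collects the two linearization residuals $r_1$, $r_2$ from the Fr\'{e}chet expansions. Under mere Fr\'{e}chet differentiability both are $o(\dgnorm{e}_+) \norm{z}_{H^2(\Omega)}$ by definition of the Fr\'{e}chet derivative, giving the first claim. Under the $C^2$ hypothesis, the Fr\'{e}chet remainder satisfies $\abs{\mathcal{R}(\bq;\bw)} \leq C \, \abs{\bw}^2$, so the volume piece of $R$ is controlled by $C \norm{\nabla e}_{L^\infty(\Omega,\mT_h)} \norm{\nabla e}_{L^2(\Omega)} \norm{\nabla z}_{L^2(\Omega)}$; splitting $e = \eta + \xi$ with $\xi \in V_{h,p}$ and invoking the inverse estimate~\eqref{eq:inv_estimate_2a} on $\xi$ (together with an $L^\infty$-approximation bound for $\eta$) produces a factor $\max_K(p_K / h_K^{d/2})$, and the analogous boundary residual contributes an additional $p^{1/2}$ through the multiplicative trace inequality, yielding~\eqref{eq:functional_error_estimate_Rbound}. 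I expect the main obstacle to be the second step above: identifying the correct pair of Fr\'{e}chet identities so that the two linearizations combine into the clean $(1+\theta)$ factor. Because they live at different base points ($\nabla u$ and $\nabla u_{h,p}$) and involve different increments ($-\nabla e$ and $-\sigma \widehat{\jump{u_{h,p}}}$), a careless book-keeping of the Lipschitz base-point swap would destroy the cancellation at $\theta = -1$ that is responsible for $h$-optimal $L^2$ convergence.
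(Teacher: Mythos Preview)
Your overall architecture is sound and close to the paper's, but the order in which you apply linearization and Galerkin orthogonality differs, and this creates a real gap in the $C^2$ bound for $R$. The paper first passes to the Fr\'echet derivative $N'(u;\cdot,\cdot)$ and uses Lemma~\ref{lem:dual_consistency} to obtain the exact identity
\[
J_\psi(e) = N'(u;e,z) - (1+\theta)\!\int_{\Gamma_{h,0,\rmD}} \ba'(\nabla u;\jump{e})\cdot\nabla z \ud s,
\]
and only \emph{then} splits $z = (z-\Pi_{h,p}z) + \Pi_{h,p}z$, defining $R := N'(u;e,\Pi_{h,p}z)$. Galerkin orthogonality and the Fr\'echet definition give $R = o(\dgnorm{e}_+)\,\dgnorm{\Pi_{h,p}z}_+$, and under the $C^2$ hypothesis the Taylor remainder yields $R \le C'_\bA \dgnorm{e}_+^2 \, \dgnorm{\Pi_{h,p}z}_\star$ with the $L^\infty$-type norm $\dgnorm{\cdot}_\star$ landing on the \emph{discrete} function $\Pi_{h,p}z$, where inverse estimates~\eqref{eq:inv_estimate_2a}--\eqref{eq:inv_estimate_2b} apply directly and produce the factor $\max_K(p_K^{3/2}/h_K^{d/2})$.

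In your approach the two Fr\'echet expansions are performed with test function $z$ \emph{before} invoking Galerkin orthogonality, so your residuals $r_1,r_2$ carry $\nabla z$ rather than $\nabla(\Pi_{h,p}z)$. Your volume remainder is $\int_\Omega |\nabla e|^2\,|\nabla z|$, which you propose to bound by $\|\nabla e\|_{L^\infty(\Omega,\mT_h)}\|\nabla e\|_{L^2}\|\nabla z\|_{L^2}$; but $e = u - u_{h,p}$ is not discrete, and obtaining $\|\nabla\eta\|_{L^\infty}$ for $\eta = u - \Pi_{h,p}u$ requires $u|_K \in W^{1,\infty}(K)$ (equivalently $s_K > d/2+1$ via Sobolev embedding), which is \emph{not} among the hypotheses of the theorem. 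Putting the $L^\infty$ on $\nabla z$ instead fails for the same reason, since $z\in H^2(\Omega)$ does not embed into $W^{1,\infty}(\Omega)$ for $d\ge 2$. A secondary caveat: your ``Lipschitz base-point swap'' of $\ba'(\nabla u_{h,p};\cdot)$ to $\ba'(\nabla u;\cdot)$ is not justified under mere Fr\'echet differentiability, since Assumption~\ref{asm:A} gives Lipschitz continuity of $\bv\mapsto\bA(\bv)\bv$, not of $\ba'$ in its first argument; you should instead expand both $\bA(\nabla u_{h,p})\nabla u_{h,p}$ and $\bA(\widehat\nabla_\sigma u_{h,p})\widehat\nabla_\sigma u_{h,p}$ about the common base point $\nabla u$ and subtract. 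The clean fix for both issues is to reorganize as the paper does: work with $N'(u;e,\cdot)$ from the outset so that the remainder is tested against $\Pi_{h,p}z$.
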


Before we embark on the proof of Theorem~\ref{thm:functional_error_estimate}, we first introduce an auxiliary result. By our assumption that the map~$\bv \mapsto \bA(\bv) \bv \colon \mathbb{R}^d \to \mathbb{R}^d$ is everywhere Fr\'{e}chet differentiable in~$\mathbb{R}^d$, we have that the map $y \mapsto N(y; v) \colon V(h, p) \to \mathbb{R}$ is everywhere Fr\'{e}chet differentiable in~$V(h, p)$. Accordingly, for any~$v \in V(h, p)$, let~$N'(q; w, v)$ denote the derivative of the map $y \mapsto N(y; v) \colon V(h, p) \to \mathbb{R}$ at~$q$ in the direction~$w$, given by
\begin{equation*}
N'(q; w, v) = \lim_{t \to 0} \frac{N(q + t w; v) - N(q; v)}{t} , \qquad q, w, v \in V(h, p) .
\end{equation*}
%
% Fr\'{e}chet differentiability implies that
% \begin{equation}
% N'(q; w, v) = N(q + w; v) - N(q; v) + o( \dgnorm{w}_+ ) \, \dgnorm{v}_+ \qquad \text{as $\dgnorm{w}_+ \to 0$} , 
% \end{equation}
% for all~$q, w, v \in V(h, p)$. Proceeding similarly as in the proof of Lemma~\ref{lem:lipschitz_continuity}, it is easy to verify that~\eqref{eq:} holds independent of~$h$ and~$p$. As a result of Lemma~\ref{lem:lipschitz_continuity}, we also have that
% \begin{equation}
% N'(q; w, v) \leq C_N \, \dgnorm{w}_+ \, \dgnorm{v}_+ \qquad \forall q, w, v \in V(h, p) ,
% \end{equation}
% where we recall that~$C_N$ is the constant from Lemma~\ref{lem:lipschitz_continuity}.
%
% In view of Lemma~\ref{lem:lipschitz_continuity}, we have that
% \begin{equation} \label{eq:dN_continuity}
% N'(q; w, v) \leq C_N \, \dgnorm{w}_+ \, \dgnorm{v}_+ \qquad \forall q, w, v \in V(h, p) ,
% \end{equation}
% % for all~$q, w, v \in V(h, p)$, 
% where we recall that~$C_N$ is the constant from Lemma~\ref{lem:lipschitz_continuity}. Moreover, proceeding similarly as in the proof of Lemma~\ref{lem:lipschitz_continuity}, it follows that
% \begin{equation} \label{eq:dN_xxx}
% N'(q; w, v) = N(q + w; v) - N(q; v) + o( \dgnorm{w}_+ ) \, \dgnorm{v}_+ \qquad \text{as $\dgnorm{w}_+ \to 0$} , 
% \end{equation}
% for all~$q, w, v \in V(h, p)$, independent of~$h$ and~$p$. 
%
We introduce the following auxiliary result.
\begin{lemma} \label{lem:dual_consistency}
Let~$u \in H^s(\Omega) \cap C^0(\Omega)$, $s > 3/2$, denote the solution of~\eqref{eq:model_problem}, and suppose that the dual problem~\eqref{eq:dual_problem} has a strong solution~$z \in H^2(\Omega)$. Then, 
\begin{equation*}
J_\psi(w) 
= 
N'(u; w, z) 
- 
(1 + \theta) \int_{\Gamma_{h, 0, \rmD}} \ba'(\nabla{u}; \jump{w}) \cdot \nabla{z} \ud s \qquad \forall w \in V(h, p) .
\end{equation*}
\end{lemma}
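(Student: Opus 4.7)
The plan is to derive explicit expressions for both sides of the claimed identity and compare.

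\emph{Computing $J_\psi(w)$.} I start from $J_\psi(w) = (\psi, w)_\Omega$ and substitute the strong form \eqref{eq:dual_problem_pde} of the dual problem, then integrate by parts elementwise and collect the boundary contributions over $\mF_h$ via the standard DG jump--average decomposition. Four regularity facts then collapse all but one term: (i) $z \in H^2(\Omega)$ gives $\jump{z}|_F = 0$ on $\mF_{h, 0}$ and makes $\nabla{z}$ single-valued on every face; (ii) $z = 0$ on $\Gamma_\rmD$; (iii) since $\nabla \cdot (\bA^\ast(\nabla{u})\nabla{z}) = -\psi \in L^2(\Omega)$, we have $\jump{\bA^\ast(\nabla{u})\nabla{z}}|_F = 0$ on $\mF_{h, 0}$ (using \cite[Lemma~1.24]{PietroErn2011}, as in the proof of Lemma~\ref{lem:galerkin_orthogonality}); and (iv) $\bA^\ast(\nabla{u})\nabla{z} \cdot \bn = 0$ on $\Gamma_\rmN$. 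Only the $\Gamma_{h, 0, \rmD}$ contribution survives. Using the defining identity $\bA^\ast(\bq)\bv \cdot \bw = \ba'(\bq; \bw) \cdot \bv$ together with the linearity of $\ba'(\bq; \cdot)$ in its second argument, this yields
\begin{equation*}
J_\psi(w) = \sum_{K \in \mT_h} \int_K \ba'(\nabla{u}; \nabla{w}) \cdot \nabla{z} \ud x - \int_{\Gamma_{h, 0, \rmD}} \ba'(\nabla{u}; \jump{w}) \cdot \nabla{z} \ud s .
\end{equation*}

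\emph{Computing $N'(u; w, z)$.} I differentiate each of the three terms in the compact representation \eqref{eq:N_alt} at $y = u$ in direction $w$. Since $\jump{u} = 0$ on $\Gamma_{h, 0}$ and $u = g_\rmD$ on $\Gamma_\rmD$, one has $\widehat{\nabla}_\sigma u = \nabla{u}$, and the directional derivative of $\widehat{\nabla}_\sigma y$ at $y = u$ in direction $w$ is $\nabla{w} - \sigma \jump{w}$ on $\Gamma_{h, 0, \rmD}$ (with the convention $\jump{w} = \bn w$ on $\Gamma_\rmD$). The chain rule through $\ba'(\nabla{u}; \cdot)$ and its linearity split $\ba'(\nabla{u}; \nabla{w} - \sigma \jump{w}) = \ba'(\nabla{u}; \nabla{w}) - \sigma \ba'(\nabla{u}; \jump{w})$. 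Setting $v = z$, using $\jump{z} = 0$ and the single-valuedness of $\nabla{z}$, I expect the $-\theta \sigma^{-1} \avg{\ba'(\nabla{u}; \nabla{w})} \cdot \nabla{z}$ contribution from the middle term of \eqref{eq:N_alt} to cancel exactly against the $+\theta \sigma^{-1} \avg{\ba'(\nabla{u}; \nabla{w})} \cdot \nabla{z}$ coming from the third term. What remains is
\begin{equation*}
N'(u; w, z) = \sum_{K \in \mT_h} \int_K \ba'(\nabla{u}; \nabla{w}) \cdot \nabla{z} \ud x + \theta \int_{\Gamma_{h, 0, \rmD}} \ba'(\nabla{u}; \jump{w}) \cdot \nabla{z} \ud s .
\end{equation*}

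Subtracting the two displays produces exactly $J_\psi(w) - N'(u; w, z) = -(1 + \theta) \int_{\Gamma_{h, 0, \rmD}} \ba'(\nabla{u}; \jump{w}) \cdot \nabla{z} \ud s$, which is the claimed identity. The main obstacle is the bookkeeping in the second step: one has to carry the $\sigma$, $\sigma^{-1}$, and $\theta$ factors through the differentiation of three nonlinear face integrals and verify the precise cancellation that eliminates all $\sigma^{-1}$ terms from $N'(u; w, z)$. The first step is routine elementwise integration by parts, but crucially rests on the $H^2$-regularity of $z$ (to kill $\jump{z}$ and give $\nabla{z}$ a single-valued trace) and on the normal-trace continuity of the dual flux $\bA^\ast(\nabla{u})\nabla{z}$ across interior faces provided by the dual equation.
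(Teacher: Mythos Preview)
Your proposal is correct and follows essentially the same route as the paper: both compute $J_\psi(w)$ via elementwise integration by parts against the dual equation (using $\jump{z}=0$, $z=0$ on $\Gamma_\rmD$, and the normal-trace continuity of $\bA^\ast(\nabla u)\nabla z$ across interior faces), compute $N'(u;w,z)$ directly, and compare. Your treatment of $N'(u;w,z)$ is in fact more explicit than the paper's, which simply records the result~\eqref{eq:dN_uwz} without spelling out the cancellation of the $\sigma^{-1}$ terms that you correctly identify.
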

\begin{proof}
Since~$z \in H^2(\Omega)$, we have that~$\left. \jump{z} \right|_F = \mathbf{0}$ for all~$F \in \mF_{h, 0}$. Accordingly, evaluating~$N'(u; w, z)$ for any~$w \in V(h, p)$, we find that
\begin{equation}
N'(u; w, z) 
=
\sum_{K \in \mT_h} \int_K \ba'(\nabla{u}; \nabla{w}) \cdot \nabla{z} \ud x
+
\theta \int_{\Gamma_{h, 0, \rmD}} \! \avg{\ba'(\nabla{u}; \jump{w}) \cdot \nabla{z}} \ud s . \label{eq:dN_uwz}
\end{equation}
Using the dual problem~\eqref{eq:dual_problem} and applying integration-by-parts, we also find that, for all~$w \in V(h, p)$,
\begin{equation} \label{eq:Jpsi_w}
\begin{aligned}
J_\psi(w)
= \ &
- \sum_{K \in \mT_h} \int_K w \, \nabla \cdot \left( \bA^\ast(\nabla{u}) \nabla{z} \right) \ud x
\\ = \ &
\sum_{K \in \mT_h} \left( \int_K \bA^\ast(\nabla{u}) \nabla{z} \cdot \nabla{w} \ud x - \int_{\partial K} \bA^\ast(\nabla{u}) \nabla{z} \cdot \bn_K w \ud s \right)
\\ = \ &
\sum_{K \in \mT_h} \int_K \bA^\ast(\nabla{u}) \nabla{z} \cdot \nabla{w} \ud x 
-
\int_{\Gamma_{h, 0}} \jump{\bA^\ast(\nabla{u}) \nabla{z}} \avg{w} \ud s
\\ & -
\int_{\Gamma_{h, 0, \rmD}} \avg{\bA^\ast(\nabla{u}) \nabla{z}} \cdot \jump{w} \ud s .
\end{aligned}
\end{equation}
By \cite[Lemma~1.24]{PietroErn2011}, it follows that $\left. \jump{\bA^\ast(\nabla{u}) \nabla{z}} \right|_F = 0$ weakly for all~$F \in \mF_{h, 0}$. Thence, comparing~\eqref{eq:dN_uwz} and~\eqref{eq:Jpsi_w} while noting that $\bA^\ast(\nabla{u}) \nabla{z} \cdot \bw = \ba'(\nabla{u}; \bw) \cdot \nabla{z}$ for all~$\bw \in \mathbb{R}^d$, we obtain the stated result.
\end{proof}

With the aid of Lemma~\ref{lem:dual_consistency}, we now present a proof of Theorem~\ref{thm:functional_error_estimate}.
\begin{proof}[Proof of Theorem~\ref{thm:functional_error_estimate}]
Denote by~$\Pi_{h, p} \colon H^s(\Omega, \mT_h) \to V_{h, p}$, $s > 3/2$, the mapping from Corollary~\ref{crl:approximation_estimate_dgnorm}, and let us write $e = u - u_{h, p}$. Lemma~\ref{lem:dual_consistency} implies that
\begin{align}
J_\psi(u) - J_\psi(u_{h, p}) 
= \ & 
N'(u; e, z) 
- 
(1 + \theta) \int_{\Gamma_{h, 0, \rmD}} \ba'(\nabla{u}; \jump{e}) \cdot \nabla{z} \ud s \nonumber
\\ = \ &
N'(u; e, z - \Pi_{h, p}(z)) 
- 
(1 + \theta) \int_{\Gamma_{h, 0, \rmD}} \ba'(\nabla{u}; \jump{e}) \cdot \nabla{z} \ud s \nonumber
\\ &
+
N'(u; e, \Pi_{h, p}(z)) . \label{eq:dual_error_representation}
\end{align}
Considering the first term in~\eqref{eq:dual_error_representation}, we deduce by Lemma~\ref{lem:lipschitz_continuity} that
\begin{align*}
N'(u; e, z - \Pi_{h, p}(z)) 
= \ &
\lim_{t \to 0} \frac{ N(u + t e; z - \Pi_{h, p}(z)) - N(u; z - \Pi_{h, p}(z))}{t}
\\ \leq \ &
\sup_{t > 0} \frac{ N(u + t e; z - \Pi_{h, p}(z)) - N(u; z - \Pi_{h, p}(z)) }{ t }
\\ \leq \ &
C_N \, \dgnorm{e}_+ \, \dgnorm{z - \Pi_{h, p}(z)}_+ ,
\end{align*}
where~$C_N$ is the constant from Lemma~\ref{lem:lipschitz_continuity}. Using the error estimate of Theorem~\ref{thm:dgnorm_error_estimate} and the approximation estimate of Corollary~\ref{crl:approximation_estimate_dgnorm}, we then obtain:
\begin{align*}
& N'(u; e, z - \Pi_{h, p}(z))
\\ & \quad \leq
C \left( \sum_{K \in \mT_h} \frac{h_K^{2 \mu_K - 2}}{p_K^{2 t_K - 3}} \norm{u}_{H^{t_K}(K)}^2 \right)^{\!1/2} 
\left( \sum_{K \in \mT_h} \frac{h_K^{2 \lambda_K - 2}}{p_K^{2 m_K - 3}} \norm{z}_{H^{m_K}(K)}^2 \right)^{\!1/2} . 
\end{align*}
Next, applying the Cauchy-Schwarz inequality to the second term in~\eqref{eq:dual_error_representation}, we have that
\begin{align*}
& (1 + \theta) \int_{\Gamma_{h, 0, \rmD}} \ba'(\nabla{u}; \jump{u - u_{h, p}}) \cdot \nabla{z} \ud s
\\ & \qquad \leq
(1 + \theta) 
\left( \int_{\Gamma_{h, 0, \rmD}} \sigma \abs{\ba'(\nabla{u}; \jump{u - u_{h, p}})}^2 \ud s \right)^{1/2}
\left( \int_{\Gamma_{h, 0, \rmD}} \sigma^{-1} \abs{\nabla{z}}^2 \right)^{1/2} .
\end{align*}
Using that $\abs{\ba'(\bq; \bw)} \leq C_\bA \abs{\bw}$ for all~$\bq, \bw \in \mathbb{R}^d$ and subsequently applying Theorem~\ref{thm:dgnorm_error_estimate}, we find:
\begin{equation*}
\int_{\Gamma_{h, 0, \rmD}} \sigma \abs{\ba'(\nabla{u}; \jump{e})}^2 \ud s
\, \leq \,
C_\bA \dgnorm{e}^2
\, \leq \,
C \left( \sum_{K \in \mT_h} \frac{h_K^{2 \mu_K - 2}}{p_K^{2 t_K - 3}} \norm{u}_{H^{t_K}(K)}^2 \right) .
\end{equation*}
Moreover, argueing similarly as in the proof of Lemma~\ref{lem:inv_trace_inequality} and subsequently applying the trace inequality from Lemma~\ref{lem:trace_inequality}, we deduce that
\begin{align}
\int_{\Gamma_{h, 0, \rmD}} \sigma^{-1} \abs{\nabla{z}}^2 \ud s
\leq \ &
C \alpha^{-1} \sum_{K \in \mT_h} \frac{h_K}{p_K^2} \int_{\partial K} \abs{\nabla{z}}^2 \ud s \nonumber
\\ \leq \ &
C \alpha^{-1} \sum_{K \in \mT_h} \frac{h_K}{p_K^2} \left( h_K^{-1} \norm{z}_{H^1(K)}^2 + \norm{z}_{H^1(K)} \, \norm{z}_{H^2(K)} \right) \nonumber
\\ \leq \ &
C \alpha^{-1} \, \norm{z}_{H^2(\Omega)}^2 . \label{eq:dual_trace_inequality}
\end{align}
Hence, we obtain:
\begin{equation*}
(1 + \theta) \int_{\Gamma_{h, 0, \rmD}} \! \ba'(\nabla{u}; \jump{e}) \cdot \nabla{z} \ud s 
\leq
C \, \frac{1 + \theta}{\sqrt{\alpha}} \left( \sum_{K \in \mT_h} \frac{h_K^{2 \mu_K - 2}}{p_K^{2 t_K - 3}} \norm{u}_{H^{t_K}(K)}^2 \! \right)^{\!1/2} \norm{z}_{H^2(\Omega)} .
\end{equation*}
Substituting the above bounds back into~\eqref{eq:dual_error_representation}, we arrive at the stated estimate~\eqref{eq:functional_error_estimate} with~$R = N'(u; e, \Pi_{h, p}(z))$. 

We claim that $R = o( \dgnorm{e}_+ ) \, \norm{z}_{H^2(\Omega)}$. Fr\'{e}chet differentiability of the map $y \mapsto N(y; v) \colon V(h, p) \to \mathbb{R}$ everywhere in~$V(h, p)$ implies that
\begin{equation*}
N'(q; w, v) = N(q + w; v) - N(q; v) + o( \dgnorm{w}_+ ) \, \dgnorm{v}_+ \qquad \text{as $\dgnorm{w}_+ \to 0$} , 
\end{equation*}
for all~$q, w, v \in V(h, p)$. Hence, by the Galerkin-orthogonality property of Lemma~\ref{lem:galerkin_orthogonality}, we obtain that
\begin{align*}
R = N'(u; e, \Pi_{h, p}(z)) 
= \ &
%
% N(u_{h, p}; - \Pi_{h, p}(z)) - N(u; - \Pi_{h, p}(z)) 
% + 
% o( \dgnorm{e}_+ ) \ \dgnorm{\Pi_{h, p}(z)}_+ 
% %
% \\ = \ &
% %
N(u; \Pi_{h, p}(z)) - N(u_{h, p}; \Pi_{h, p}(z)) 
+ 
o( \dgnorm{e}_+ ) \ \dgnorm{\Pi_{h, p}(z)}_+ 
\\ = \ &
o( \dgnorm{e}_+ ) \ \dgnorm{\Pi_{h, p}(z)}_+ 
\end{align*}
as $\dgnorm{e}_+ \to 0$. Here, in view of~\eqref{eq:dual_trace_inequality}, we have that~$\dgnorm{z}_+ \leq C \norm{z}_{H^2(\Omega)}$, so that, by the triangle inequality and Corollary~\ref{crl:approximation_estimate_dgnorm},
\begin{equation}
\dgnorm{\Pi_{h, p}(z)}_+ \leq C \norm{z}_{H^2(\Omega)} . \label{eq:dgnormPiz}
\end{equation}
Therefore, we find that $R = o( \dgnorm{e}_+ ) \, \norm{z}_{H^2(\Omega)}$, as claimed.

It remains to prove the estimate~\eqref{eq:functional_error_estimate_Rbound} subject to the condition that the map $\bv \mapsto \bA(\bv) \bv \colon \mathbb{R}^d \to \mathbb{R}^d$ is twice continuously differentiable everywhere in~$\mathbb{R}^d$. Accordingly, let
\begin{equation*}
\ba''(\bq; \bw_1, \bw_2) := \lim_{t \to 0} \frac{\ba'(\bq + t \bw_2; \bw_1) - \ba'(\bq; \bw_1)}{t} , \qquad \bq, \bw_1, \bw_2 \in \mathbb{R}^d ,
\end{equation*}
denote the second-order derivative of the map~$\bv \mapsto \bA(\bv) \bv \colon \mathbb{R}^d \to \mathbb{R}^d$ at~$\bq \in \mathbb{R}^d$ in the direction~$(\bw_1, \bw_2) \in \mathbb{R}^d \times \mathbb{R}^d$, and let there be a constant~$C'_\bA$ such that $\abs{\ba''(\bq; \bw_1, \bw_2)} \leq C'_\bA \, \abs{\bw_1} \, \abs{\bw_2}$ for all $\bq, \bw_1, \bw_2 \in \mathbb{R}^d$. By Taylor's Theorem, we have that
\begin{equation}
\bA(\bv_1) \bv_1 - \bA(\bv_2) \bv_2 = \ba'(\bv_2; \bv_1 - \bv_2) + \br(\bv_2, \bv_1 - \bv_2) , \qquad \forall \bv_1, \bv_2 \in \mathbb{R}^d , \label{eq:taylor_expansion}
\end{equation}
with the integral remainder
\begin{equation*}
\br(\bv_2, \bv_1 - \bv_2) 
= 
\int_0^1 \ba''( \bv_2 + t (\bv_1 - \bv_2); \bv_1 - \bv_2, \bv_1 - \bv_2) (1 - t) \ud t ,
\end{equation*}
satisfying $\abs{\br(\bv_2, \bv_1 - \bv_2)} \leq C'_\bA \abs{\bv_1 - \bv_2}^2$. Now, recall that $R = N'(u; e, \Pi_{h, p}(z))$. Using the Galerkin-orthogonality property of Lemma~\ref{lem:galerkin_orthogonality} and the Taylor expansion~\eqref{eq:taylor_expansion}, we deduce that
\begin{align*}
R 
= \ &
N'(u; e, \Pi_{h, p}(z))
- 
N(u; \Pi_{h, p}(z)) 
+ 
N(u_{h, p}; \Pi_{h, p}(z))
\\ = \ &
- \sum_{K \in \mT_h} \int_K \br(\nabla{u}; \nabla{e}) \cdot \nabla{(\Pi_{h, p}(z))} \ud x 
\\ &
+ \int_{\Gamma_{h, 0, \rmD}} \avg{ \br(\nabla{u}, \widehat{\nabla}_\sigma \, e) \cdot (\theta \sigma^{-1} \nabla{(\Pi_{h, p}(z))} + \jump{\Pi_{h, p}(z)}) } \ud s
\\ &
- \theta \int_{\Gamma_{h, 0, \rmD}} \sigma^{-1} \avg{ \br(\nabla{u}, \nabla{e}) \cdot \nabla({\Pi_{h, p}(z)}) } \ud s 
\\ \leq \ &
C'_\bA \sum_K \int_K \abs{\nabla{e}}^2 \ \abs{\nabla{(\Pi_{h, p}(z))}} \ud x
\\ & 
+ C'_\bA \int_{\Gamma_{h, 0, \rmD}} \avg{ \abs{\widehat{\nabla}_\sigma \, e}^2 \, \left( \abs{\theta} \, \sigma^{-1} \abs{\nabla(\Pi_{h, p}(z))} + \abs{\jump{\Pi_{h, p}(z)}} \right) } \ud s 
\\ &
+ C'_\bA \, \abs{\theta} \int_{\Gamma_{h, 0, \rmD}} \sigma^{-1} \avg{ \abs{\nabla{e}}^2 \ \abs{\nabla{(\Pi_{h, p}(z))}} } \ud s .
\end{align*}
By Young's inequality and the fact that~$\abs{\avg{\bq_1 \cdot \bq_2}} \leq \avg{\abs{\bq_1} \ \abs{\bq_2}} \leq 2 \avg{\abs{\bq_1}} \, \avg{\abs{\bq_2}}$ for all~$\bq_1, \bq_2 \in [H^1(\Omega, \mT_h)]^d$, we then obtain:
\begin{align}
R 
\leq \ &
%
% C'_\bA \sum_K \int_K \abs{\nabla{e}}^2 \ \abs{\nabla{(\Pi_{h, p}(z))}} \ud x \nonumber
% \\ & 
% + 12 \abs{\theta} \, C'_\bA \int_{\Gamma_{h, 0, \rmD}} \sigma^{-1} \avg{\abs{\nabla{e}}}^2 \, \avg{\abs{\nabla{(\Pi_{h, p}(z))}}} \ud s \nonumber
% \\ &
% + 4 \, C'_\bA \int_{\Gamma_{h, 0, \rmD}} \avg{\abs{\nabla{e}}}^2 \, \abs{\jump{\Pi_{h, p}(z)}} \ud s \nonumber
% \\ & 
% + 2 \abs{\theta} \, C'_\bA \int_{\Gamma_{h, 0, \rmD}} \sigma \, \abs{\jump{e}}^2 \avg{\abs{\nabla{(\Pi_{h, p}(z))}}} \ud s \nonumber
% \\ &
% + 2 C'_\bA \int_{\Gamma_{h, 0, \rmD}} \sigma^2 \abs{\jump{e}}^2 \, \abs{\jump{\Pi_{h, p}(z)}} \ud s \nonumber
% %
% \\ \leq \ &
% %
C'_\bA \sum_K \int_K \abs{\nabla{e}}^2 \ \abs{\nabla{(\Pi_{h, p}(z))}} \ud x \nonumber
\\ & 
+ 12 \abs{\theta} \, C'_\bA \int_{\Gamma_{h, 0, \rmD}} \left( \sigma^{-1} \avg{\abs{\nabla{e}}}^2 + \sigma \abs{\jump{e}}^2 \right) \, \avg{\abs{\nabla{(\Pi_{h, p}(z))}}} \ud s \nonumber
\\ &
+ 4 \, C'_\bA \int_{\Gamma_{h, 0, \rmD}} \left( \avg{\abs{\nabla{e}}}^2 + \sigma^2 \abs{\jump{e}}^2 \right) \, \abs{\jump{\Pi_{h, p}(z)}} \ud s \nonumber
\\ \leq \ &
(4 + 12 \abs{\theta}) \ C'_\bA \ \dgnorm{e}_+^2 \ \dgnorm{\Pi_{h, p}(z)}_\star ,  \label{eq:Rbound_star}
\end{align}
where
\begin{align}
\dgnorm{\Pi_{h, p}(z)}_\star
= \ &
\max_{K \in \mT_h} \norm{\Pi_{h, p}(z)}_{W^1_\infty(K)}
+
\max_{F \in \mF_{h, 0, \rmD}} \bignorm{\avg{\abs{\nabla{(\Pi_{h, p}(z))}}}}_{L^\infty(F)} 
\nonumber \\ & 
+ 
\max_{F \in \mF_{h, 0, \rmD}} \sigma \, \bignorm{ \abs{\jump{\Pi_{h, p}(z)}} }_{L^\infty(F)} . \label{eq:norm_Piz_star}
\end{align}
An upper bound for $\dgnorm{e}_+$ is provided by Theorem~\ref{thm:dgnorm_error_estimate}. To prove~\eqref{eq:functional_error_estimate_Rbound}, it thus remains to show that $\dgnorm{\Pi_{h, p}(z)}_\star \leq C \max_{K \in \mT_h} \Big( p_K^{3/2} \, h_K^{-d/2} \Big) \norm{z}_{H^2(\Omega)}$. 
To this end, let us note that, in view of Lemma~\ref{lem:approximation_estimate} and the triangle inequality, there exists a constant~$C$ such that $\norm{\Pi_{h, p}(z)}_{H^2(K)} \leq C \norm{z}_{H^2(K)}$. Thence, exploiting the inverse estimate~\eqref{eq:inv_estimate_2a}, we have that
\begin{align*}
\max_{K \in \mT_h} \norm{\Pi_{h, p}(z)}_{W^1_\infty(K)} 
\leq & \
C \max_{K \in \mT_h} \left( \frac{p_K}{h_K^{d / 2}} \, \norm{\Pi_{h, p}(z)}_{H^1(K)} \right)
\\ \leq & \
C \max_{K \in \mT_h} \left( \frac{p_K}{h_K^{d / 2}} \right) \norm{z}_{H^2(\Omega)} . 
\end{align*}
For the second term in~\eqref{eq:norm_Piz_star}, we apply the inverse estimate~\eqref{eq:inv_estimate_2b} to obtain
\begin{align*}
& \max_{F \in \mF_{h, 0, \rmD}} \bignorm{\avg{\abs{\nabla{(\Pi_{h, p}(z))}}}}_{L^\infty(F)} 
\\ & \qquad \leq
\max_{K \in \mT_h} \left( \max_{F \in \mF_{h, K}} \bignorm{ \left. (\Pi_{h, p}(z)) \right|_K }_{W^1_\infty(F)} \right)
\\ & \qquad \leq
C \max_{K \in \mT_h} \left( \max_{F \in \mF_{h, K}} \frac{p_K}{(\meas[d-1]{F})^{1/2}} \, \bignorm{ \left. (\Pi_{h, p}(z)) \right|_K }_{H^1(F)} \right) .
\end{align*}
On account of Assumption~\ref{asm:B}, there exists a constant~$C \equiv C(d, \beta_1, \beta_2)$ such that~$\meas[d-1]{F} \geq C \, h_K^{d-1}$ for all~$F \in \mF_{h, K}$, $K \in \mT_h$. Applying the trace inequality~\eqref{eq:trace_inequality}, we then find that
\begin{align*}
& \max_{F \in \mF_{h, 0, \rmD}} \bignorm{\avg{\abs{\nabla{(\Pi_{h, p}(z))}}}}_{L^\infty(F)} 
\\ & \qquad \leq
C \max_{K \in \mT_h} \frac{p_K}{h_K^{d/2}} \left( \norm{\Pi_{h, p}(z)}_{H^1(K)}^2 + h_K \, \norm{\Pi_{h, p}(z)}_{H^1(K)} \, \norm{\Pi_{h, p}(z)}_{H^2(K)} \right)^{1/2}
\\ & \qquad \leq
C \max_{K \in \mT_h} \left( \frac{p_K}{h_K^{d/2}} \right) \norm{z}_{H^2(\Omega)} .
\end{align*}
Finally, considering the third term in~\eqref{eq:norm_Piz_star}, we deduce that, by Assumption~\ref{asm:B} and the inverse estimate~\eqref{eq:inv_estimate_2b},
\begin{align*}
& \max_{F \in \mF_{h, 0, \rmD}} \sigma \, \bignorm{ \abs{\jump{\Pi_{h, p}(z)}} }_{L^\infty(F)}
\\ & \qquad = \
\max_{K \in \mT_h} \left( \max_{F \in \mF_{h, K} \cap \mF_{h, 0, \rmD}} \sigma \bignorm{ \abs{\jump{\Pi_{h, p}(z)}} }_{L^\infty(F)} \right)
\\ & \qquad \leq \
C \max_{K \in \mT_h} \left( \frac{p_K^3}{h_K^{(d+1)/2}} \max_{F \in \mF_{h, K} \cap \mF_{h, 0, \rmD}} \bignorm{ \abs{\jump{\Pi_{h, p}(z)}} }_{L^2(F)} \right) .
\end{align*}
By the fact that $z \in H^1(\Omega)$ with~$z = 0$ on~$\Gamma_\rmD$, we have that~$\bignorm{\abs{\jump{\Pi_{h, p}(z)}}}_{L^2(F)} = \bignorm{\abs{\jump{z - \Pi_{h, p}(z)}}}_{L^2(F)}$ for all~$F \in \mF_{h, 0, \rmD}$. Applying Lemma~\ref{lem:approximation_estimate}, we then obtain:
\begin{align*}
\max_{F \in \mF_{h, 0, \rmD}} \sigma \, \bignorm{ \abs{\jump{\Pi_{h, p}(z)}} }_{L^\infty(F)}
\leq \ &
C \max_{K \in \mT_h} \left( \frac{p_K^3}{h_K^{(d+1)/2}} \max_{F \in \mF_{h, K}} \bignorm{z - (\Pi_{h, p}(z)) |_K }_{L^2(F)} \right)
\\ \leq \ &
C \max_{K \in \mT_h} \left( \frac{p_K^{3/2}}{h_K^{(d-2)/2}} \right) \norm{z}_{H^2(\Omega)} .
\end{align*}
Substituting the above inequalities back into~\eqref{eq:norm_Piz_star}, we thus find that $\dgnorm{\Pi_{h, p}(z)}_\star \leq C \max_{K \in \mT_h}\left( p_K^{3/2} \, h_K^{-d/2} \right) \norm{z}_{H^2(\Omega)}$, which, by~\eqref{eq:Rbound_star}, brings us to the stated result~\eqref{eq:functional_error_estimate_Rbound}.
\end{proof}

As a corollary to Theorem~\ref{thm:functional_error_estimate}, we obtain the following estimate for the error in the $L^2(\Omega)$-norm.
\begin{corollary} \label{crl:l2norm_error_estimate}
Consider the same premises as in Theorem~\ref{thm:functional_error_estimate} and assume that the dual regularity estimate~\eqref{eq:dual_regularity} holds. Then, there exists a constant~$C$ such that
\begin{equation}
\norm{u - u_{h, p}}_{L^2(\Omega)} 
\leq C \left( \frac{h}{p^{1/2}} + \frac{1+\theta}{\sqrt{\alpha}} \right) \, \left( \sum_{K \in \mT_h} \frac{h_K^{2 \mu_K - 2}}{p_K^{2 t_K - 3}} \norm{u}_{H^{t_K}(K)}^2 \! \right)^{\!1/2} 
+ R , \label{eq:l2norm_error_estimate}
\end{equation}
where $t_K = \min(r_K, s_K)$, $\mu_K = \min(p_K + 1, r_K, s_K)$ and $R = o( \dgnorm{u - u_{h, p}}_+ )$. Moreover, if the map~$\bv \mapsto \bA(\bv) \colon \mathbb{R}^d \to \mathbb{R}^d$ is twice continuously differentiable everywhere in~$\mathbb{R}^d$, then there exists a constant~$C$ such that
\begin{equation}
R \leq C \max_{K \in \mT_h} \left( \frac{p_K^{3/2}}{h_K^{d/2}} \right) \, \left( \sum_{K \in \mT_h} \frac{h_K^{2 \mu_K - 2}}{p_K^{2 t_K - 3}} \norm{u}_{H^{t_K}(K)}^2 \! \right) . \label{eq:l2norm_error_estimate_Rbound}
\end{equation}
\end{corollary}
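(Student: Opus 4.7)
The plan is to deduce this $L^2$-bound from Theorem~\ref{thm:functional_error_estimate} by a standard Aubin--Nitsche duality argument. Specifically, I would apply Theorem~\ref{thm:functional_error_estimate} with the test functional $\psi := u - u_{h,p} \in L^2(\Omega)$, so that $J_\psi(u) - J_\psi(u_{h,p}) = \norm{u - u_{h,p}}_{L^2(\Omega)}^2$, and invoke the dual regularity assumption~\eqref{eq:dual_regularity} to obtain the dual solution $z \in H^2(\Omega)$ of~\eqref{eq:dual_problem} satisfying $\norm{z}_{H^2(\Omega)} \leq C \norm{u - u_{h,p}}_{L^2(\Omega)}$.

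Since only $H^2$-regularity of $z$ is available, I would set $\ell_K = 2$ for every $K \in \mT_h$. Using that $r_K \geq 2$ by Assumption~\ref{asm:B}(i) and $p_K \geq 1$, this gives $m_K = \min(r_K, 2) = 2$ and $\lambda_K = \min(p_K + 1, r_K, 2) = 2$, so that the summation involving $z$ in~\eqref{eq:functional_error_estimate} collapses to
\begin{equation*}
\left( \sum_{K \in \mT_h} \frac{h_K^{2 \lambda_K - 2}}{p_K^{2 m_K - 3}} \norm{z}_{H^{m_K}(K)}^2 \right)^{1/2}
=
\left( \sum_{K \in \mT_h} \frac{h_K^2}{p_K} \norm{z}_{H^2(K)}^2 \right)^{1/2}
\leq
\frac{h}{p^{1/2}} \, \norm{z}_{H^2(\Omega)} .
\end{equation*}
Substituting this into~\eqref{eq:functional_error_estimate} produces an inequality of the form $E^2 \leq C \, \mathcal{E}_u \bigl( \frac{h}{p^{1/2}} + \frac{1 + \theta}{\sqrt{\alpha}} \bigr) \norm{z}_{H^2(\Omega)} + R$, with $E := \norm{u - u_{h,p}}_{L^2(\Omega)}$ and $\mathcal{E}_u$ the square-root $u$-factor displayed in~\eqref{eq:l2norm_error_estimate}. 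Bounding $\norm{z}_{H^2(\Omega)} \leq C E$ and dividing through by $E$ (the case $E = 0$ being trivial) yields the desired estimate~\eqref{eq:l2norm_error_estimate}.

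The remainder term $R$ is handled by the same cancellation: in Theorem~\ref{thm:functional_error_estimate} it is of the form $o(\dgnorm{u - u_{h,p}}_+) \, \norm{z}_{H^2(\Omega)}$, and after dividing by $E$ and absorbing the bounded factor $\norm{z}_{H^2(\Omega)}/E$ into a generic constant, one is left with $R = o(\dgnorm{u - u_{h,p}}_+)$ as stated. When the map $\bv \mapsto \bA(\bv) \bv$ is twice continuously differentiable, the same absorption procedure applied to the sharper bound~\eqref{eq:functional_error_estimate_Rbound} yields~\eqref{eq:l2norm_error_estimate_Rbound}. There is no genuine obstacle in this argument beyond the careful bookkeeping needed to cancel one power of $E$ against the $\norm{z}_{H^2(\Omega)}$ on the right, which is justified by the dual regularity estimate; all heavy lifting has already been done in Theorem~\ref{thm:functional_error_estimate}.
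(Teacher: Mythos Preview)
Your proposal is correct and follows precisely the same route as the paper's own proof, which simply states that the result follows from Theorem~\ref{thm:functional_error_estimate} by choosing $\psi = u - u_{h,p}$ and applying the dual regularity estimate~\eqref{eq:dual_regularity}. Your write-up merely makes explicit the bookkeeping (setting $\ell_K = 2$, collapsing the $z$-sum to $(h/p^{1/2})\norm{z}_{H^2(\Omega)}$, and cancelling one factor of $E$) that the paper leaves implicit.
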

\begin{proof}
The result follows immediately from Theorem~\ref{thm:functional_error_estimate} by selecting~$\psi = u - u_{h, p}$ and subsequently applying the regularity estimate~\eqref{eq:dual_regularity}.
\end{proof}

Let us briefly discuss the error estimates presented in Theorem~\ref{thm:functional_error_estimate} and Corollary~\ref{crl:l2norm_error_estimate}. For $h / p$ sufficiently small, we observe that
\begin{equation*}
J_\psi(u) - J_\psi(u_{h, p})
\leq
C \left( \frac{h^{\mu + \lambda - 2}}{p^{t + m - 3}} + \frac{1 + \theta}{\sqrt{\alpha}} \frac{h^{\mu-1}}{p^{t - 3/2}} \right) \norm{u}_{H^t(\Omega)} \, \norm{z}_{H^m(\Omega)}
\end{equation*}
and
\begin{equation*}
\norm{u - u_{h, p}}_{L^2(\Omega)}
\leq
C \left( \frac{h^{\mu}}{p^{t - 1}} + \frac{1 + \theta}{\sqrt{\alpha}} \frac{h^{\mu-1}}{p^{t - 3/2}} \right) \norm{u}_{H^t(\Omega)} ,
\end{equation*}
where~$t = \min_{K \in \mT_h}(t_K)$, $m = \min_{K \in \mT_h}(m_K)$, $\mu = \min_{K \in \mT_h}(\mu_K)$ and $\lambda = \min_{K \in \mT_h}(\lambda_K)$. Accordingly, when~$\theta = -1$, we find that both estimates are optimal in~$h$ and slightly suboptimal in~$p$, by one order in~$p$. On the other hand, when~$\theta \neq -1$, we find that the estimates are suboptimal in both~$h$ and~$p$, by a factor of respectively~$h^{\lambda - 1} / p^{m - 3/2}$ and~$h / p^{1/2}$. This suboptimality can be attributed to a lack of dual consistency; see Lemma~\ref{lem:dual_consistency}. We note that, for $h / p$ sufficiently small, the above estimates are identical to those obtained for interior penalty DG approximations of linear elliptic problems; cf.~\cite[Theorem~4.4]{HarrimanEtAl2003}.

% We end this section with the following remark.
%
\begin{remark} \label{rem:regularization}
For the proof of Theorem~\ref{thm:functional_error_estimate} and Corollary~\ref{crl:l2norm_error_estimate} we assumed that the map~$\bv \mapsto \bA(\bv) \colon \mathbb{R}^d \to \mathbb{R}^d$ is Fr\'{e}chet differentiable everywhere in~$\mathbb{R}^d$. This was done in order to ensure that the dual problem~\eqref{eq:dual_problem} is well defined. It is envisaged that, with some additional effort, this assumption can be avoided, for instance, by reformulating the dual problem based on a regularization of the map $\bv \mapsto \bA(\bv) \colon \mathbb{R}^d \to \mathbb{R}^d$, for example, by using the techniques in~\cite{LasryLions1986}. 
\end{remark}

%------------------------------------------------------------------------------

\section{Numerical experiments} \label{section:numerical_experiments}

We present some numerical examples to verify the theoretical error estimates presented in Section~\ref{section:error_analysis}. For simplicity, we restrict the presentation to 2D problems and consider uniformly refined meshes composed of affine quadrilaterals with uniform values of the polynomial degree~$\{ p_K \}_{K \in \mT_h}$. Throughout this section, the interior penalty parameter is fixed at~$\alpha = 10$. The nonlinear equations arising in the DG approximation are solved using an exact Newton method with a tolerance of $10^{-10}$. High-order numerical quadrature is used to integrate the terms appearing in the assembly of the associated algebraic system of equations, as well as to evaluate the error of the DG solution in various norms. 

\subsection{Example 1}
For the first numerical example, we consider the problem of Example 1 in \cite{BustinzaGatica2004}; cf. also Example 1 in \cite{HoustonEtAl2005}. Accordingly, let $\Omega = (-1, 1)^2$ with $\Gamma_\rmD = [-1, 1] \times \{ -1 \} \cup \{ 1 \} \times [-1, 1]$ and $\Gamma_\rmN = [-1, 1] \times \{ 1 \} \cup \{ -1 \} \times [-1, 1]$, and let $\bA(\bx, \nabla{u}) = \left( 2 + (1 + \abs{\nabla{u}})^{-1} \right) \mathbf{I}$, where~$\mathbf{I}$ is the~$2 \times 2$ identity matrix. The data $f$, $g_\rmD$ and $g_\rmN$ are chosen such that the solution is given by the smooth function $u(\bx) = \cos(\pi x_1 / 2) \, \cos(\pi x_2 / 2)$. We note that $\bA$ satisfies Assumption~\ref{asm:A} with~$C_\bA = 3$ and $M_\bA = 2$.  

\begin{figure}[!b] 
\psfrag{xlabel}[t][c]{$1/h$}
\psfrag{ylabel}[b][c]{$\dgnorm{u - u_{h, p}}$}
\psfrag{p = 1}[l][c]{\footnotesize \hspace{-8pt} $ p = 1$}
\psfrag{p = 2}[l][c]{\footnotesize \hspace{-8pt} $ p = 2$}
\psfrag{p = 3}[l][c]{\footnotesize \hspace{-8pt} $ p = 3$}
\psfrag{p = 4}[l][c]{\footnotesize \hspace{-8pt} $ p = 4$}
\includegraphics[width=0.45\textwidth, bb = 105 227 500 564, clip=true]{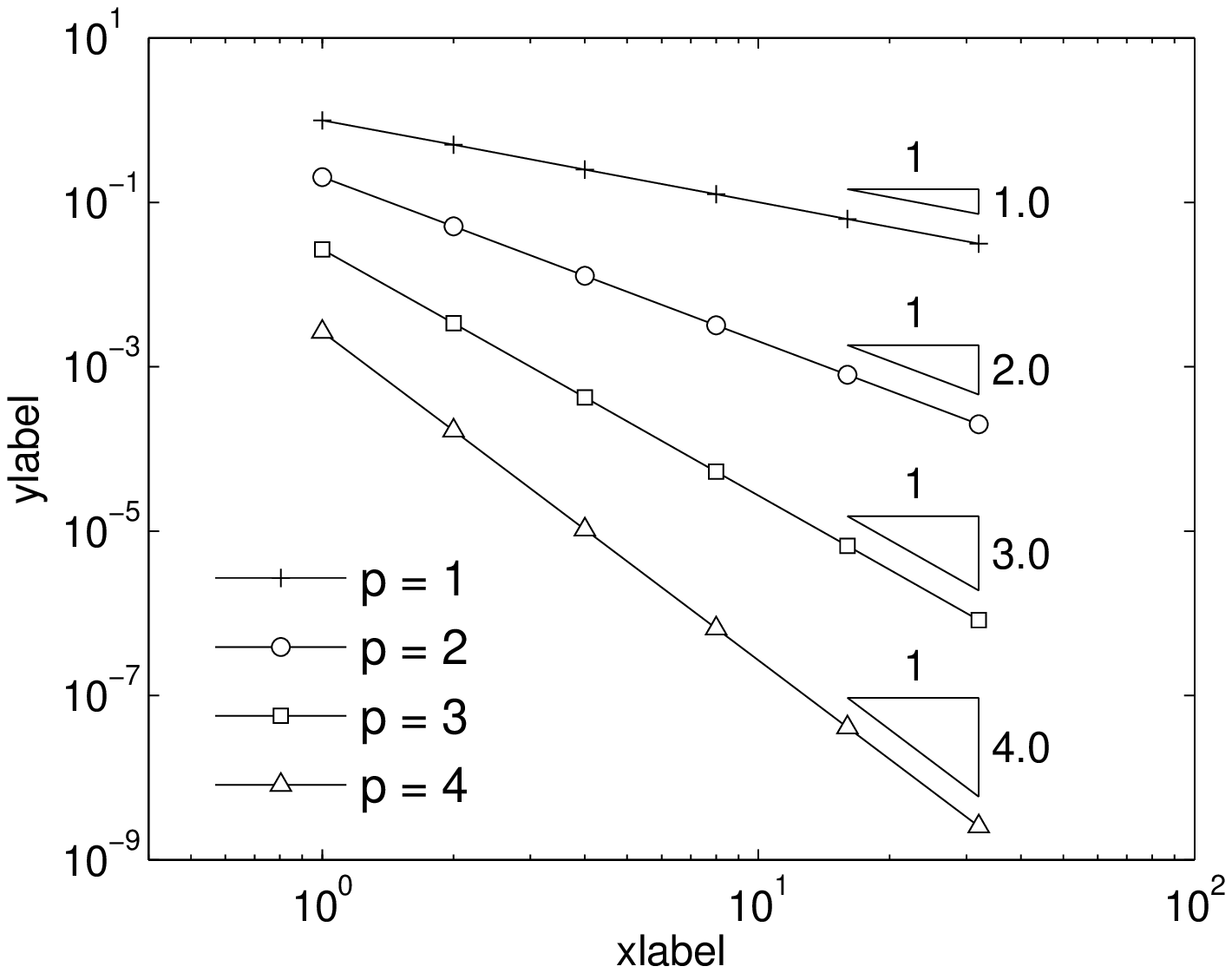}
\hfill
\includegraphics[width=0.45\textwidth, bb = 105 227 500 564, clip=true]{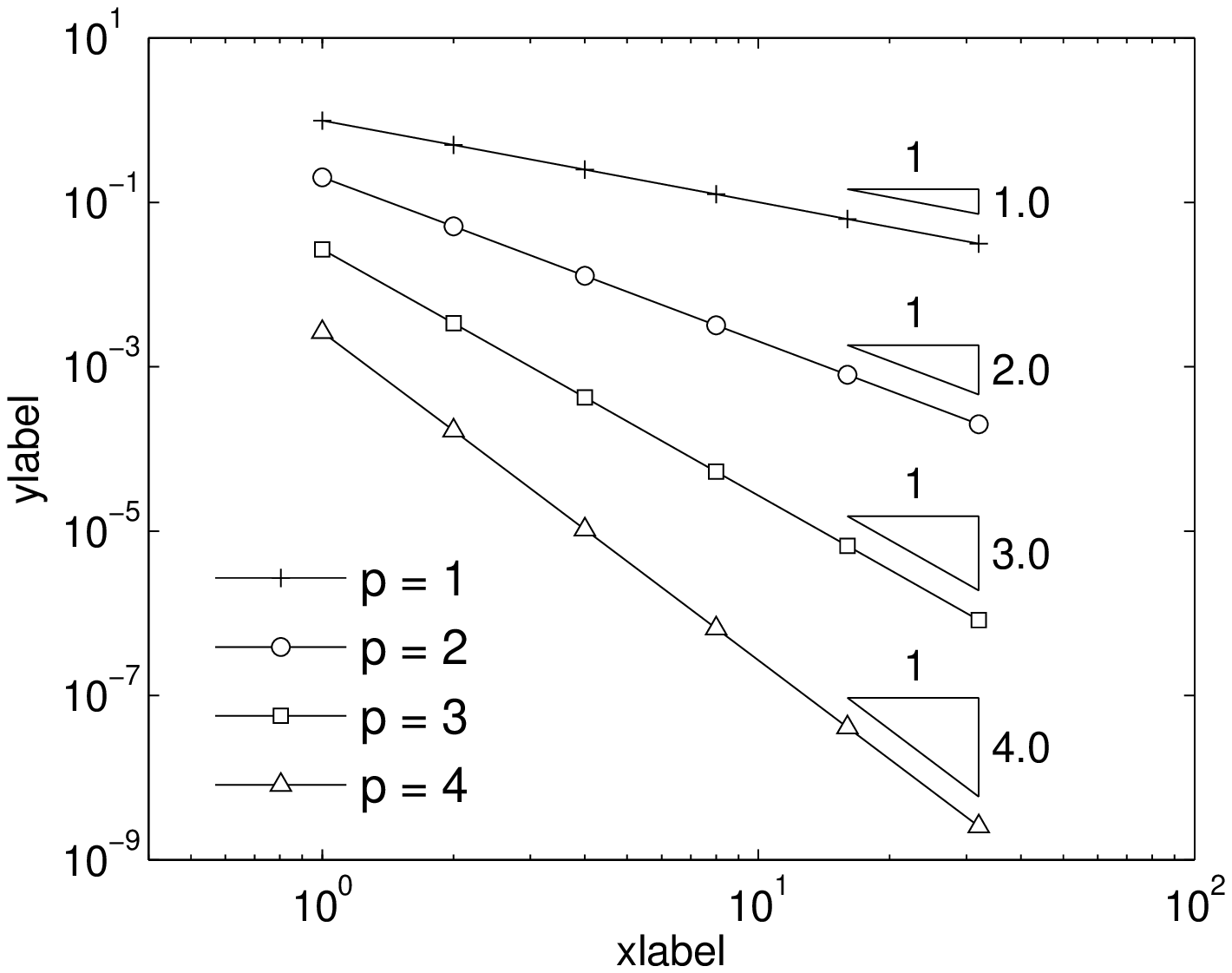}
\caption{Example 1. Convergence of $\dgnorm{u - u_{h, p}}$ with $h$-refinement for $p = 1$, $2$, $3$ and $4$. Left: $\theta = -1$. Right: $\theta = 1$.} 
\label{fig:ex1_h_eDG}
% \end{figure}
% 
% \begin{figure}[htb] 
\psfrag{xlabel}[t][c]{$1/h$}
\psfrag{ylabel}[b][c]{$\norm{u - u_{h, p}}_{L^2(\Omega)}$}
\psfrag{p = 1}[l][c]{\footnotesize \hspace{-8pt} $ p = 1$}
\psfrag{p = 2}[l][c]{\footnotesize \hspace{-8pt} $ p = 2$}
\psfrag{p = 3}[l][c]{\footnotesize \hspace{-8pt} $ p = 3$}
\psfrag{p = 4}[l][c]{\footnotesize \hspace{-8pt} $ p = 4$}
\includegraphics[width=0.45\textwidth, bb = 105 227 500 564, clip=true]{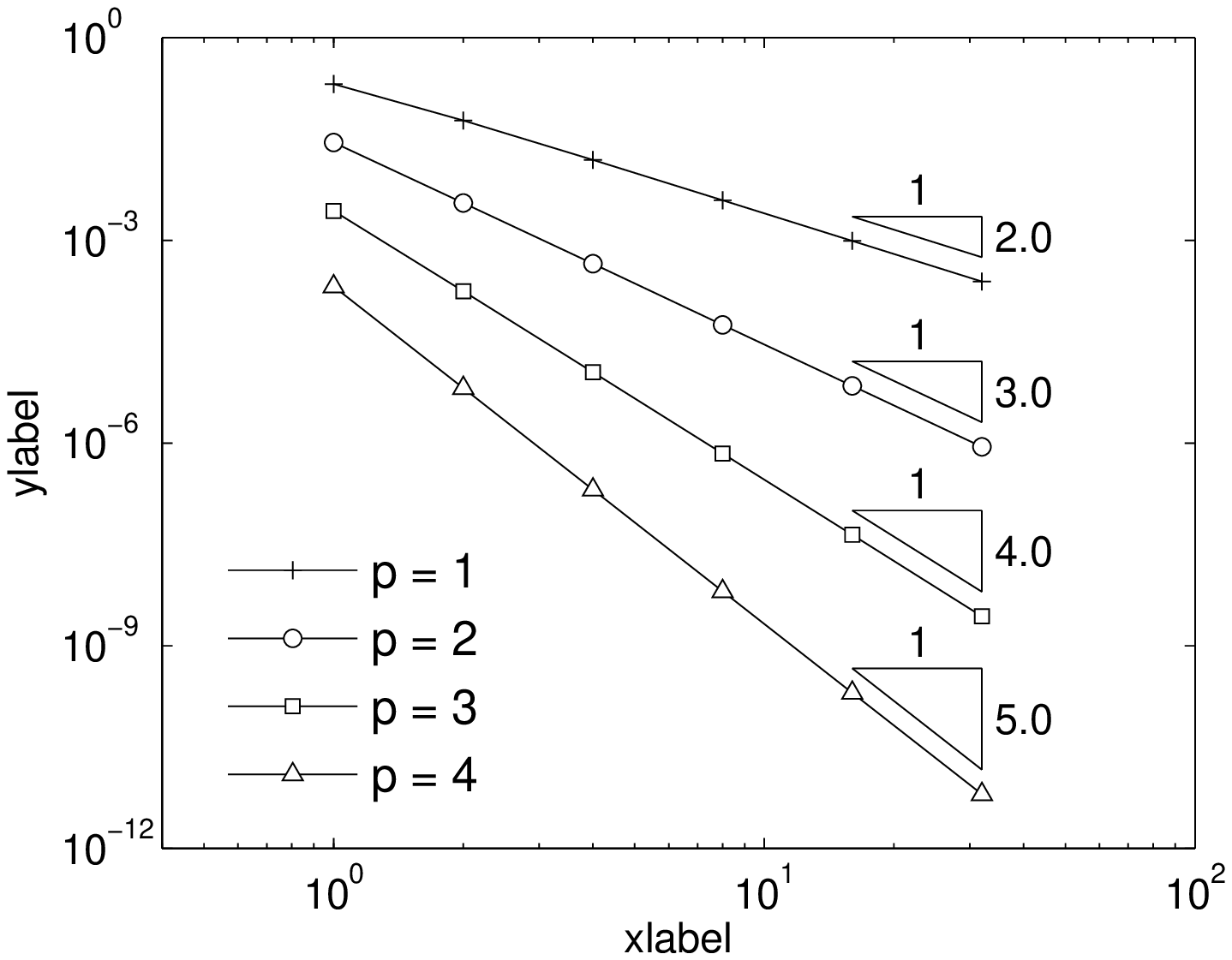}
\hfill
\includegraphics[width=0.45\textwidth, bb = 105 227 500 564, clip=true]{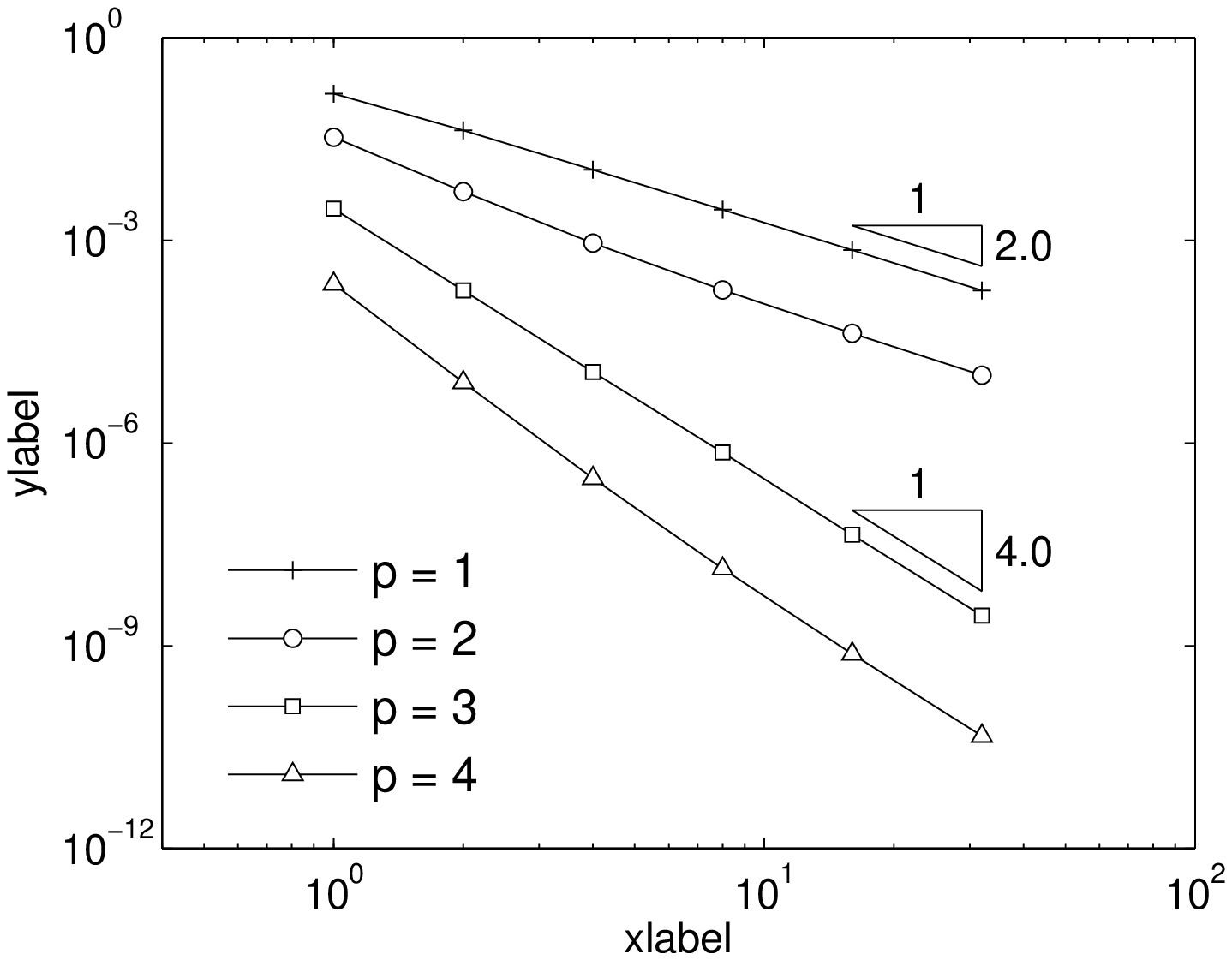}
\caption{Example 1. Convergence of $\norm{u - u_{h, p}}_{L^2(\Omega)}$ with $h$-refinement for $p = 1$, $2$, $3$ and $4$. Left: $\theta = -1$. Right: $\theta = 1$.} 
\label{fig:ex1_h_eL2}
\end{figure}

We investigate the convergence of the DG approximation~\eqref{eq:dgfem} on a sequence of successively refined meshes for different polynomial degrees. We consider two choices of the parameter $\theta$, viz. $\theta = -1$ and~$\theta = 1$. Figure~\ref{fig:ex1_h_eDG} presents the convergence of the DG-norm of the error with $h$-refinement for $p = 1$, $2$, $3$ and $4$. We observe that $\dgnorm{u - u_{h, p}}$ converges to zero, for each fixed value of~$p$, at a rate $\mathcal{O}(h^p)$ as $h \to 0$. We note that these results are in perfect agreement with the theoretical error estimate presented in Theorem~\ref{thm:dgnorm_error_estimate}, and that the computed errors are virtually indistinguishable between the two choices of the parameter~$\theta$. In Figure~\ref{fig:ex1_h_eL2}, we show the convergence of the $L^2(\Omega)$-norm of the error with $h$-refinement for $p = 1$, $2$, $3$ and $4$. Here, significant differences are observed between the two choices of~$\theta$. For~$\theta = -1$, optimal convergence rates are obtained for all values of~$p$; i.e., $\norm{u - u_{h, p}}_{L^2(\Omega)} = \mathcal{O}(h^{p+1})$ as $h \to 0$ for each fixed value of~$p$. For~$\theta = 1$ on the other hand, we see that $\norm{u - u_{h, p}}_{L^2(\Omega)}$ behaves like $\mathcal{O}(h^{p+1})$ as $h \to 0$ for odd values of $p$, and like $\mathcal{O}(h^p)$ as $h \to 0$ for even values of $p$. This suboptimal convergence behavior for~$\theta = 1$ is attributable to a lack of dual consistency; cf. Lemma~\ref{lem:dual_consistency}. The obtained convergence rates for $\norm{u - u_{h, p}}_{L^2(\Omega)}$ are in agreement with the theoretical error estimates presented in Corollary~\ref{crl:l2norm_error_estimate}. Comparing the current result to the results reported for the same example in \cite{HoustonEtAl2005}, we note that presented DG method with $\theta = -1$ shows improved convergence behavior with respect to the error in the $L^2(\Omega)$-norm.

\subsection{Example 2}
In the second example, we consider a problem with a non-smooth solution. Let $\Omega = (-1, 1)^2$ with $\Gamma_\rmD = \partial \Omega$, and $\bA(\bx, \nabla{u}) = ( 1 + \mathrm{e}^{-\abs{\nabla{u}}^2} ) \mathbf{I}$, where~$\mathbf{I}$ denotes again the~$2 \times 2$ identity matrix. It is easy to verify that Assumption~\ref{asm:A} is satisfied with~$C_\bA = 1$ and $M_\bA = 1 - \sqrt{2/\mathrm{e}}$. The data $f$ and $g_\rmD$ are chosen such that the solution is given by $u(\bx) = \abs{\bx}^3$. We note that the solution features a singularity at the point $(0, 0)$, and that $u \in H^{4 - \epsilon}(\Omega)$ for arbitrary small $\epsilon > 0$.

We investigate the convergence behavior with $p$-refinement for the two meshes displayed in Figure~\ref{fig:ex2_meshes}. In Tables~\ref{table:ex2_mesh_a} and~\ref{table:ex2_mesh_b}, we show the convergence of the DG-norm of the error and the $L^2(\Omega)$-norm for~$p = 1$, $2$, \dots, $24$, and $\theta = -1$, grouped in odd and even values of $p$. For mesh (a), we observe that $\dgnorm{u - u_{h, p}}$ converges at a rate of almost $\mathcal{O}(p^{-6})$ as $p \to \infty$, and that $\norm{u - u_{h, p}}_{L^2(\Omega)}$ converges at a rate of approximately $\mathcal{O}(p^{-15/2})$. Comparing with the theoretical error estimates of Theorem~\ref{thm:dgnorm_error_estimate} and Corollary~\ref{crl:l2norm_error_estimate}, we note that these convergence rates are more than twice the predicted rate. Indeed, since $u \in H^{4 - \epsilon}$ for any $\epsilon > 0$, the expected convergence rates are $\mathcal{O}(p^{-5/2+\epsilon})$ for $\dgnorm{u - u_{h, p}}$ and $\mathcal{O}(p^{-3+\epsilon})$ for $\norm{u - u_{h, p}}_{L^2(\Omega)}$. This order-doubling convergence behavior is attributable to the fact that the singularity in $u$ at the point $(0, 0)$ coincides with a vertex of mesh (a). In the presence of such corner singularities, it is possible to establish \emph{a priori} error estimates that reflect this order-doubling phenomenon by using approximation results in terms of weighted Sobolev norms; cf., for example, \cite[Remark 3.8]{HoustonEtAl2002}. For mesh (b), on the other hand, the singularity in $u$ lies in the interior of an element rather than at a vertex. Here, we see that the $p$-convergence rates approach the theoretical convergence rates predicted by Theorem~\ref{thm:dgnorm_error_estimate} and Corollary~\ref{crl:l2norm_error_estimate}. Indeed, it is found that $\dgnorm{u - u_{h, p}}$ and $\norm{u - u_{h, p}}_{L^2(\Omega)}$ both behave like $\mathcal{O}(p^{-3})$ as $p \to \infty$. For $\dgnorm{u - u_{h, p}}$, this constitutes a slight improvement of the theoretical convergence rate, by half an order in $p$, while for $\norm{u - u_{h, p}}_{L^2(\Omega)}$ the convergence rate is in perfect agreement. We end this example by stating that the results for $\theta = 1$ are almost identical.

\begin{figure}[h]
\psfrag{p0}[c][r]{\footnotesize $(-1, -1)$}
\psfrag{p1}[c][l]{\footnotesize $( 1, -1)$}
\psfrag{p2}[b][l]{\footnotesize $( 1,  1)$}
\psfrag{p3}[b][r]{\footnotesize $(-1,  1)$}
\centering
\begin{minipage}[h]{0.3\textwidth} \centering
\includegraphics[width=1.0\textwidth]{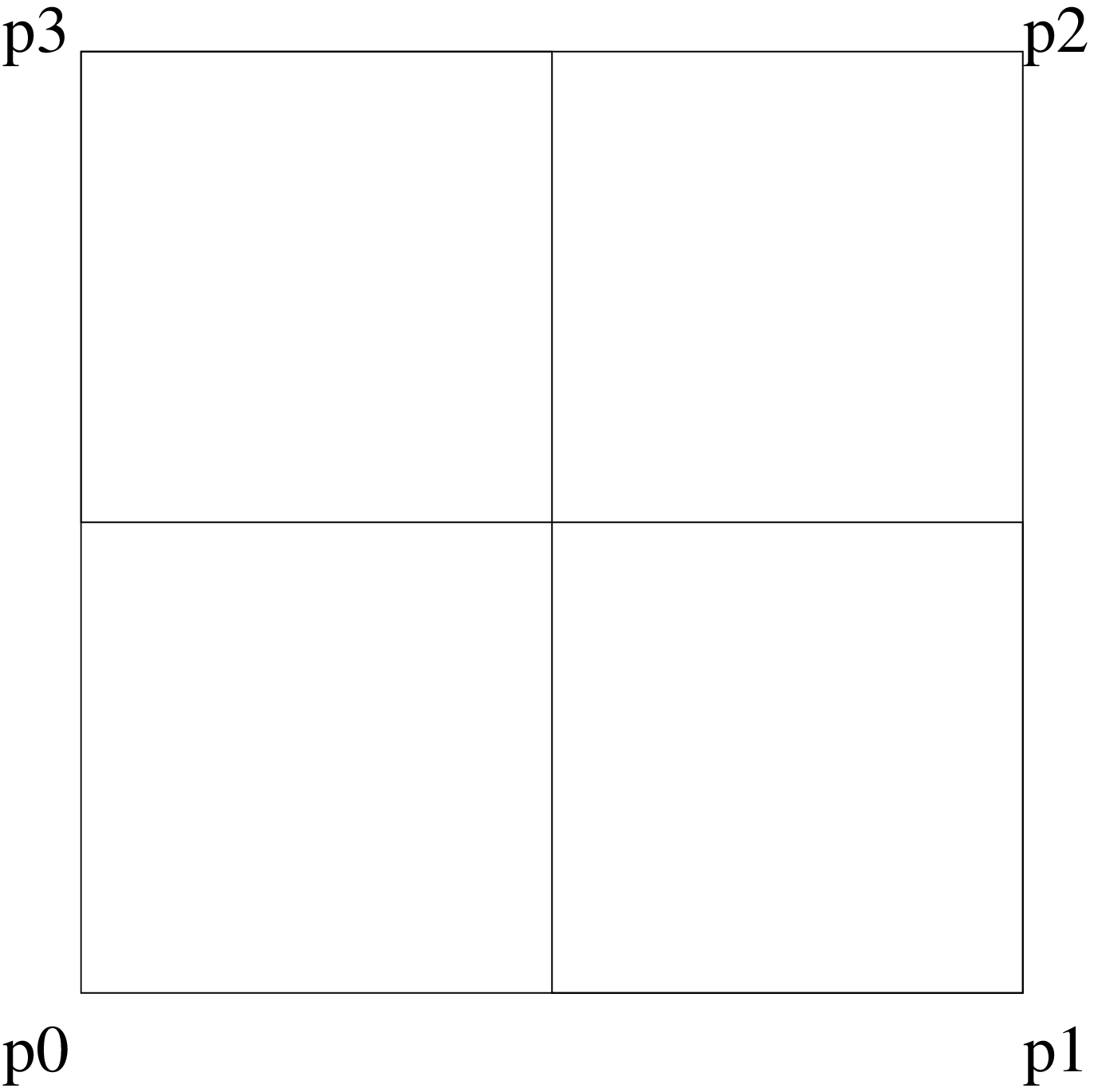} \\
{\footnotesize (a)}
\end{minipage}
\hspace{1cm}
\begin{minipage}[h]{0.3\textwidth} \centering
\includegraphics[width=1.0\textwidth]{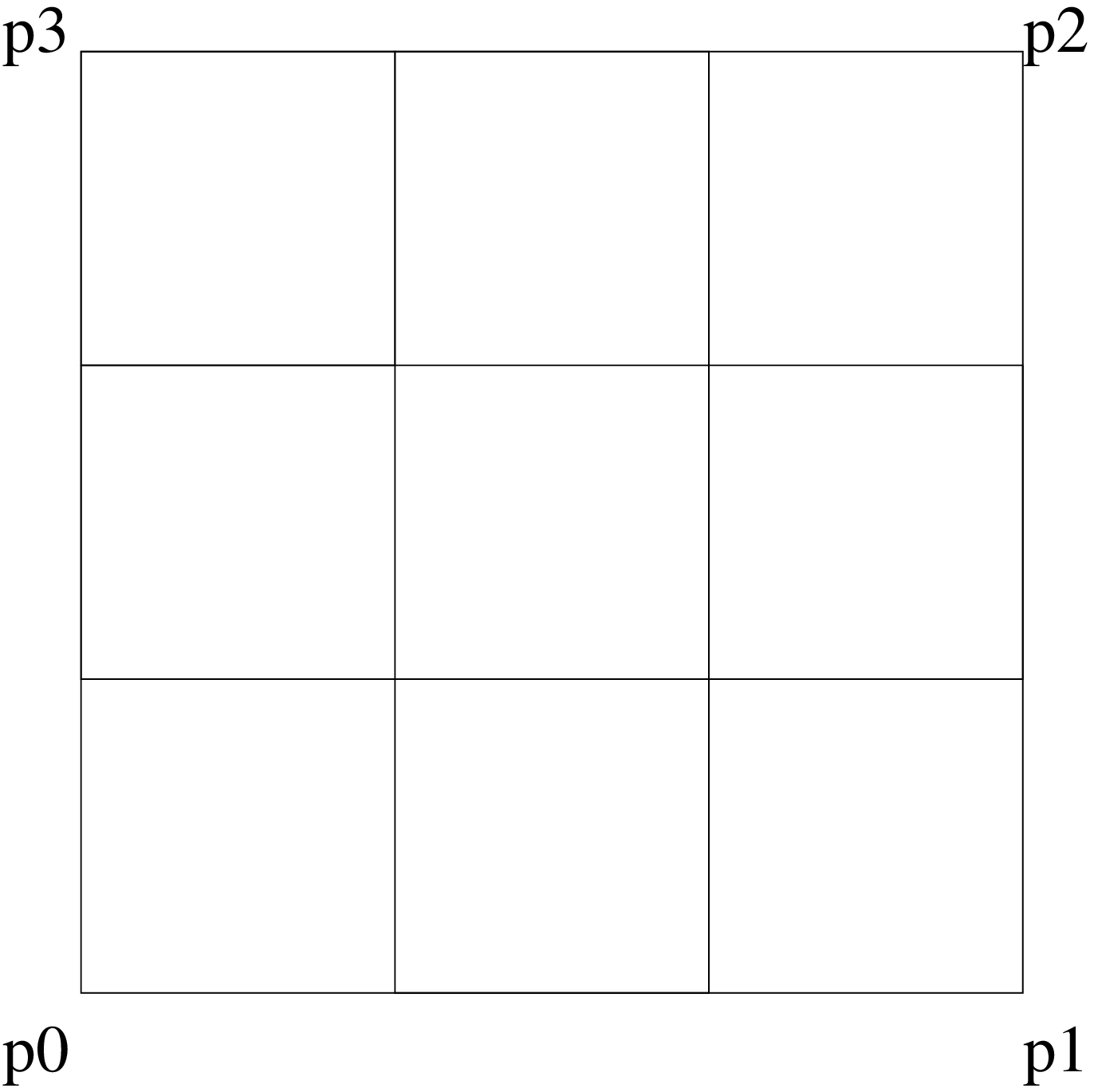} \\
{\footnotesize (b)}
\end{minipage}
\caption{The two meshes considered for Example 2.} 
\label{fig:ex2_meshes}
\end{figure}

\begin{table}[htb]
\setlength{\tabcolsep}{5pt}
\centering
\begin{minipage}[h]{0.45\textwidth} \centering \small
\begin{tabular}{llclc}
\hline 
$p$ & \multicolumn{2}{l}{$\dgnorm{u - u_{h, p}}$} & \multicolumn{2}{l}{$\norm{u - u_{h, p}}_{L^2(\Omega)}$} \\
\hline
1  & 3.11E+00 & \textemdash & 4.46E-01 & \textemdash \\ 
3  & 4.09E-02 &  (3.94) & 3.30E-03 &  (4.47) \\
5  & 2.17E-03 &  (5.75) & 1.51E-04 &  (6.03) \\
7  & 2.84E-04 &  (6.05) & 1.35E-05 &  (7.18) \\
9  & 6.38E-05 &  (5.94) & 1.97E-06 &  (7.66) \\
11 & 1.96E-05 &  (5.89) & 4.51E-07 &  (7.34) \\
13 & 7.32E-06 &  (5.88) & 1.31E-07 &  (7.43) \\
15 & 3.16E-06 &  (5.88) & 4.50E-08 &  (7.44) \\
17 & 1.51E-06 &  (5.88) & 1.76E-08 &  (7.50) \\
19 & 7.86E-07 &  (5.88) & 7.64E-09 &  (7.51) \\
21 & 4.36E-07 &  (5.88) & 3.59E-09 &  (7.55) \\
23 & 2.55E-07 &  (5.89) & 1.80E-09 &  (7.57) \\
\hline
\end{tabular}
\end{minipage}
\hfill
\begin{minipage}[h]{0.45\textwidth} \centering \small
\begin{tabular}{llclc}
\hline
$p$ & \multicolumn{2}{l}{$\dgnorm{u - u_{h, p}}$} & \multicolumn{2}{l}{$\norm{u - u_{h, p}}_{L^2(\Omega)}$} \\
\hline
2  & 5.74E-01 &  \textemdash & 8.60E-02 &  \textemdash \\  
4  & 8.72E-03 &  (6.04) & 6.74E-04 &  (7.00) \\ 
6  & 7.12E-04 &  (6.18) & 3.71E-05 &  (7.15) \\
8  & 1.28E-04 &  (5.96) & 5.00E-06 &  (6.97) \\
10 & 3.43E-05 &  (5.91) & 9.28E-07 &  (7.55) \\
12 & 1.17E-05 &  (5.89) & 2.37E-07 &  (7.49) \\
14 & 4.73E-06 &  (5.88) & 7.52E-08 &  (7.45) \\
16 & 2.16E-06 &  (5.88) & 2.78E-08 &  (7.46) \\
18 & 1.08E-06 &  (5.88) & 1.15E-08 &  (7.49) \\
20 & 5.81E-07 &  (5.88) & 5.19E-09 &  (7.54) \\
22 & 3.32E-07 &  (5.88) & 2.53E-09 &  (7.56) \\
24 & 1.99E-07 &  (5.89) & 1.30E-09 &  (7.59) \\
\hline
\end{tabular}
\end{minipage}
\vspace{6pt}
\caption{Example 2. Convergence of $\dgnorm{u - u_{h, p}}$ and $\norm{u - u_{h, p}}_{L^2(\Omega)}$ with $p$-refinement for mesh (a) and $\theta = - 1$. The results are grouped in odd and even values of $p$. The quantities in brackets indicate the $p$-convergence rates.} \label{table:ex2_mesh_a}
\vspace{24pt}
% \end{table}
% 
% \begin{table}[h]
% \setlength{\tabcolsep}{5pt}
% %
% \centering
\begin{minipage}[h]{0.45\textwidth} \centering \small
\begin{tabular}{llclc}
\hline 
$p$ & \multicolumn{2}{l}{$\dgnorm{u - u_{h, p}}$} & \multicolumn{2}{l}{$\norm{u - u_{h, p}}_{L^2(\Omega)}$} \\
\hline
1  & 2.07E+00 & \textemdash & 2.31E-01 & \textemdash \\
3  & 3.39E-02 &  (3.74) & 2.22E-03 &  (4.23) \\
5  & 3.42E-03 &  (4.49) & 1.67E-04 &  (5.07) \\
7  & 1.03E-03 &  (3.55) & 4.39E-05 &  (3.96) \\
9  & 4.49E-04 &  (3.32) & 1.81E-05 &  (3.53) \\
11 & 2.35E-04 &  (3.23) & 9.29E-06 &  (3.32) \\
13 & 1.38E-04 &  (3.17) & 5.44E-06 &  (3.20) \\
15 & 8.82E-05 &  (3.14) & 3.48E-06 &  (3.13) \\
17 & 5.97E-05 &  (3.11) & 2.36E-06 &  (3.09) \\
19 & 4.23E-05 &  (3.10) & 1.68E-06 &  (3.06) \\
21 & 3.11E-05 &  (3.08) & 1.24E-06 &  (3.04) \\
23 & 2.35E-05 &  (3.08) & 9.42E-07 &  (3.02) \\
\hline
\end{tabular}
\end{minipage}
\hfill
\begin{minipage}[h]{0.45\textwidth} \centering \small
\begin{tabular}{llclc}
\hline
$p$ & \multicolumn{2}{l}{$\dgnorm{u - u_{h, p}}$} & \multicolumn{2}{l}{$\norm{u - u_{h, p}}_{L^2(\Omega)}$} \\
\hline
2  & 2.21E-01 & \textemdash & 2.12E-02 & \textemdash \\
4  & 3.63E-03 &  (5.93) & 4.62E-04 &  (5.52) \\
6  & 1.09E-03 &  (2.96) & 1.50E-04 &  (2.78) \\
8  & 4.75E-04 &  (2.90) & 6.64E-05 &  (2.83) \\
10 & 2.49E-04 &  (2.89) & 3.50E-05 &  (2.86) \\
12 & 1.47E-04 &  (2.90) & 2.07E-05 &  (2.89) \\
14 & 9.38E-05 &  (2.91) & 1.32E-05 &  (2.90) \\
16 & 6.36E-05 &  (2.92) & 8.97E-06 &  (2.91) \\
18 & 4.51E-05 &  (2.92) & 6.36E-06 &  (2.92) \\
20 & 3.31E-05 &  (2.93) & 4.67E-06 &  (2.93) \\
22 & 2.50E-05 &  (2.93) & 3.53E-06 &  (2.94) \\
24 & 1.94E-05 &  (2.94) & 2.73E-06 &  (2.94) \\
\hline
\end{tabular}
\end{minipage}
\vspace{6pt}
\caption{Example 2. Convergence of $\dgnorm{u - u_{h, p}}$ and $\norm{u - u_{h, p}}_{L^2(\Omega)}$ with $p$-refinement for mesh (b) and $\theta = - 1$. The results are grouped in odd and even values of $p$. The quantities in brackets indicate the $p$-convergence rates.}
\label{table:ex2_mesh_b}
\end{table}

\subsection{Example 3}
In the third and final example, we consider a case not fully covered by our theory. We consider the solution of the $p(\bx)$-Laplace equation with $\bA(\bx, \nabla{u}) = \abs{\nabla{u}}^{p(\bx)-2} \mathbf{I}$, where $p(\bx) = 4 - \abs{\bx}^2$. Note that $\bA$ does not comply with Assumption~\ref{asm:A} for $\abs{\bx} < 1$. The problem is posed on the $L$-shaped domain $\Omega = (-1, 1)^2 \setminus [0, 1) \times (-1, 0]$ with $\Gamma_\rmN = [-1, 1] \times \{ 1 \} \cup \{ -1 \} \times [-1, 1]$ and $\Gamma_\rmD = \partial \Omega \setminus \Gamma_\rmN$. The data $f$, $g_\rmD$ and $g_\rmN$ are chosen such that the solution is given by the smooth function $u(\bx) = x_1 \, \mathrm{e}^{x_1 \, x_2}$.

In Figure~\ref{fig:ex3_h_eDG}, we show the convergence of the DG-norm of the error with $h$-refinement for $p = 1$, $2$, $3$, $4$ and $\theta = -1$, $1$. As in Example 1, we observe that $\dgnorm{u - u_{h, p}}$ converges to zero, for each fixed value of~$p$, at a rate $\mathcal{O}(h^p)$ as $h \to 0$. Note that this is in perfect agreement with the theoretical error estimate presented in Theorem~\ref{thm:dgnorm_error_estimate}, even though the underlying Assumption~\ref{asm:A} is not met. Also note that the results are virtually distinguishable between the two choices of the parameter $\theta$. In Figure~\ref{fig:ex3_h_eL2}, we present the convergence of the $L^2(\Omega)$-norm with $h$-refinement for $p = 1$, $2$, $3$, $4$ and $\theta = -1$, $1$. Here, as in Example 1, significant differences are observed between the two values of~$\theta$. For~$\theta = -1$, optimal convergence rates are obtained for all values of~$p$; i.e., $\norm{u - u_{h, p}}_{L^2(\Omega)} = \mathcal{O}(h^{p+1})$ as $h \to 0$ for each fixed value of~$p$. For~$\theta = 1$ on the other hand, we see that $\norm{u - u_{h, p}}_{L^2(\Omega)}$ behaves like $\mathcal{O}(h^{p+1})$ as $h \to 0$ for odd values of $p$, and like $\mathcal{O}(h^p)$ as $h \to 0$ for even values of $p$. The convergence behavior for $\norm{u - u_{h, p}}_{L^2(\Omega)}$ is very similar to that seen in Example 1 and agrees well with the theoretical error estimates of Corollary~\ref{crl:l2norm_error_estimate}.

\begin{figure}[h]
\psfrag{xlabel}[t][c]{$1/h$}
\psfrag{ylabel}[b][c]{$\dgnorm{u - u_{h, p}}$}
\psfrag{p = 1}[l][c]{\footnotesize \hspace{-8pt} $ p = 1$}
\psfrag{p = 2}[l][c]{\footnotesize \hspace{-8pt} $ p = 2$}
\psfrag{p = 3}[l][c]{\footnotesize \hspace{-8pt} $ p = 3$}
\psfrag{p = 4}[l][c]{\footnotesize \hspace{-8pt} $ p = 4$}
\includegraphics[width=0.45\textwidth, bb = 105 227 500 564, clip=true]{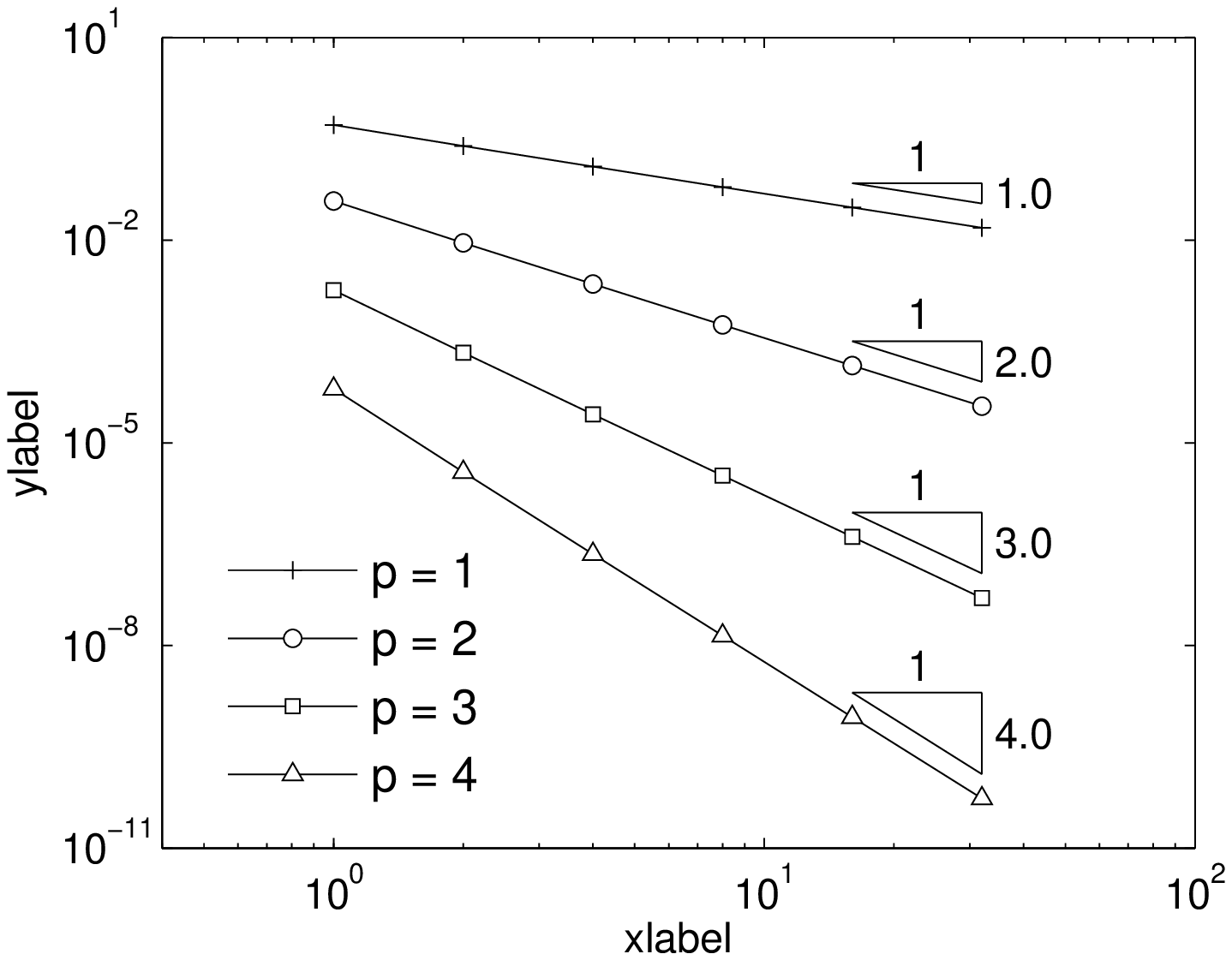}
\hfill
\includegraphics[width=0.45\textwidth, bb = 105 227 500 564, clip=true]{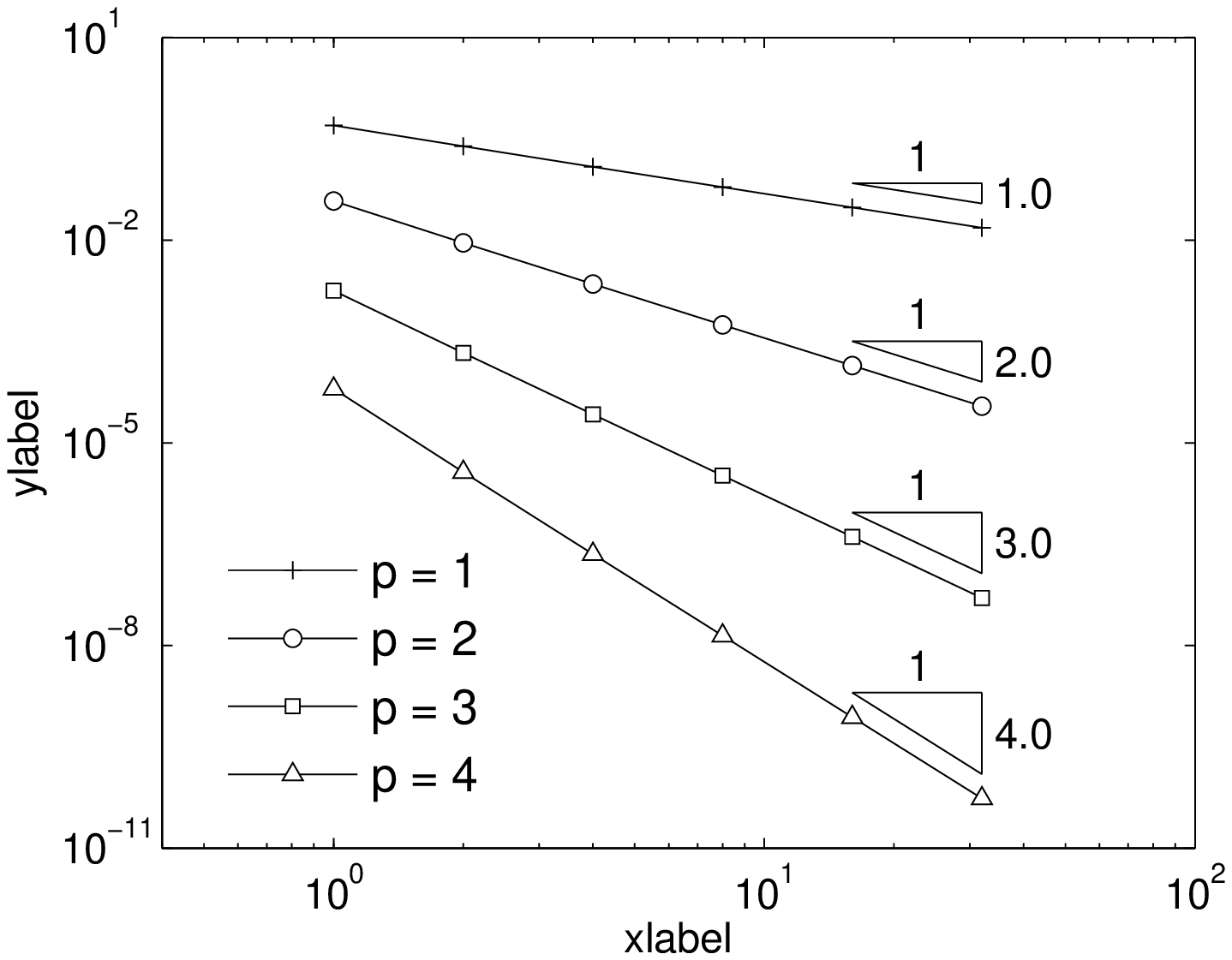}
\caption{Example 3. Convergence of $\dgnorm{u - u_{h, p}}$ with $h$-refinement for $p = 1$, $2$, $3$ and $4$. Left: $\theta = -1$. Right: $\theta = 1$.}
\label{fig:ex3_h_eDG}
% \end{figure}
% 
% \begin{figure}[h]
\psfrag{xlabel}[t][c]{$1/h$}
\psfrag{ylabel}[b][c]{$\norm{u - u_{h, p}}_{L^2(\Omega)}$}
\psfrag{p = 1}[l][c]{\footnotesize \hspace{-8pt} $ p = 1$}
\psfrag{p = 2}[l][c]{\footnotesize \hspace{-8pt} $ p = 2$}
\psfrag{p = 3}[l][c]{\footnotesize \hspace{-8pt} $ p = 3$}
\psfrag{p = 4}[l][c]{\footnotesize \hspace{-8pt} $ p = 4$}
\includegraphics[width=0.45\textwidth, bb = 105 227 500 564, clip=true]{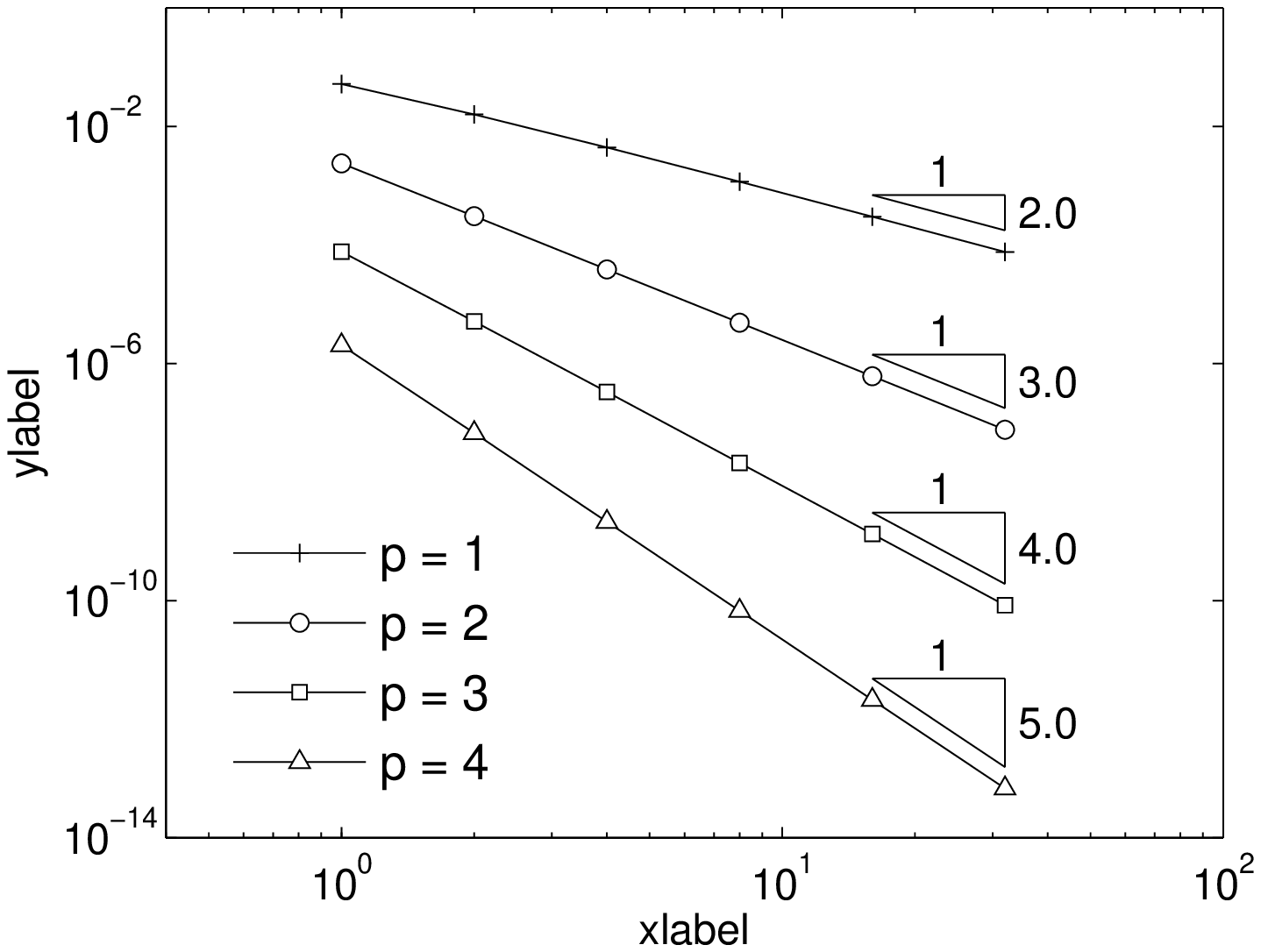}
\hfill
\includegraphics[width=0.45\textwidth, bb = 105 227 500 564, clip=true]{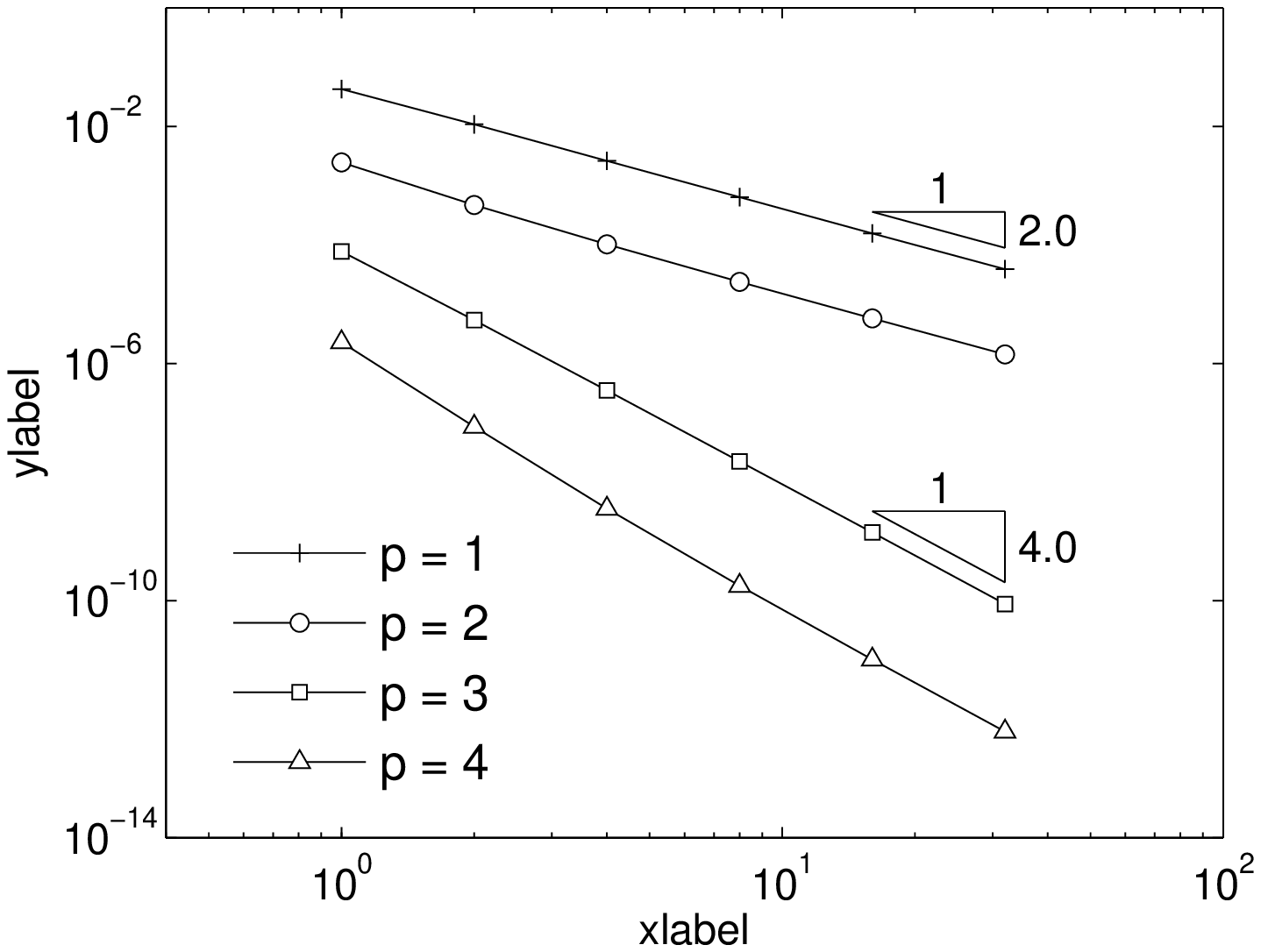}
\caption{Example 3. Convergence of $\norm{u - u_{h, p}}_{L^2(\Omega)}$ with $h$-refinement for $p = 1$, $2$, $3$ and $4$. Left: $\theta = -1$. Right: $\theta = 1$.}
\label{fig:ex3_h_eL2}
\end{figure}

%------------------------------------------------------------------------------

\section*{Acknowledgement}

The work presented in this paper was completed while the author was a Ph.D. student at the Delft University of Technology working under the supervision of Dr.~S.~J.~Hulshoff, for whose guidance and support the author is most grateful.

%------------------------------------------------------------------------------

\appendix

\section{Nonlinear inf-sup theory}

We include some auxiliary results regarding the well-posedness of nonlinear variational problems. Let~$U$ be a real Banach space equipped with the norm~$\norm{\cdot}_U$, and let~$V$ be a real reflexive Banach space equipped with the norm~$\norm{\cdot}_V$. We denote by~$U'$ and~$V'$ the respective dual spaces, equiped with the norms
\begin{equation*}
\norm{f}_{U'} = \sup_{u \in U \setminus \{ 0 \}} \frac{ \langle f, v \rangle_{U', U} }{\norm{u}_U} , \qquad \norm{g}_{V'} = \sup_{v \in V \setminus \{ 0 \}} \frac{ \langle g, v \rangle_{V', V} }{\norm{v}_V} ,
\end{equation*}
where $\langle \cdot, \cdot \rangle_{U', U}$ and $\langle \cdot, \cdot \rangle_{V', V}$ are the duality pairings between $U'$ and $U$, and $V'$ and $V$, respectively.

The first result that we present constitutes a nonlinear extension of the classical well-posedness result of Banach, Ne\v{c}as and Babu\v{s}ka; cf., for example, \cite[Theorem~1.1]{PietroErn2011}. The statement of the theorem and parts of its proof are adopted from~\cite[Appendix~A]{BrummelenBorst2005}, where the theorem is presented in a Hilbert space setting. We note that the theorem generalizes some other results from the literature; see, for example, \cite[Theorem~25.B]{Zeidler1990}.
\begin{theorem}[$\inf$-$\sup$ conditions] \label{thm:nonlinear_infsup}
Let~$a \colon U \times V \to \mathbb{R}$ be a semilinear form, such that
\begin{equation}
a(w_1; v) - a(w_2; v) \leq C_a \, \norm{w_1 - w_2}_U \ \norm{v}_V \quad \forall w_1. w_2 \in U, \ \forall v \in V \label{eq:BNB_continuity}
\end{equation}
for some constant~$C_a > 0$. Then, the variational problem
\begin{equation}
u \in U : \quad a(u; v) = f(v) \qquad \forall v \in V \label{eq:BNB_problem}
\end{equation}
admits a unique solution~$u \in U$ for every~$f \in V'$ if and only if
\begin{equation}
\exists M_a > 0 : \quad \inf_{\substack{w_1, w_2 \in U \\ w_1 \neq w_2}} \ \sup_{v \in V \setminus \{0\}} \ \frac{a(w_1; v) - a(w_2; v)}{\norm{w_1 - w_2}_U \norm{v}_V} \geq M_a \, , \label{eq:BNB_1}
\end{equation}
\begin{equation}
\sup_{w \in U} a(w; v) > 0 \quad \forall v \in V \setminus \{0\} \, . \label{eq:BNB_2}
\end{equation}
Moreover, for any~$g \in V' \setminus \{f\}$ and corresponding~$\tilde{u} \in U$ such that~$a(\tilde{u}; v) = g(v)$ for all~$v \in V$, we have the following \emph{a~priori} estimate:
\begin{equation}
\norm{u - \tilde{u}}_U \leq \frac{1}{M_a} \norm{f - g}_{V'} . \label{eq:BNB_apriori}
\end{equation}
\end{theorem}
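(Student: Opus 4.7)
The plan is to recast the theorem as a surjectivity statement for the nonlinear operator $A \colon U \to V'$ defined by $\langle A(u), v \rangle_{V', V} = a(u; v)$. Under this identification,~\eqref{eq:BNB_continuity} is Lipschitz continuity of $A$ with constant $C_a$; condition~\eqref{eq:BNB_1} is equivalent to the lower bound $\norm{A(w_1) - A(w_2)}_{V'} \geq M_a \norm{w_1 - w_2}_U$; condition~\eqref{eq:BNB_2} asserts that for every $v \in V \setminus \{0\}$ there exists $w \in U$ with $\langle A(w), v \rangle_{V', V} > 0$; and solvability of~\eqref{eq:BNB_problem} for every $f \in V'$ amounts to $A$ being a bijection from $U$ onto $V'$.

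For the sufficiency direction, uniqueness and the a priori estimate~\eqref{eq:BNB_apriori} follow immediately from the lower bound by applying~\eqref{eq:BNB_1} to the pair $(u, \tilde{u})$ and substituting $A(u) - A(\tilde{u}) = f - g$. Existence of solutions reduces to showing $R(A) = V'$. First I would observe that $R(A)$ is closed in $V'$: if $\{A(u_n)\}$ is Cauchy in $V'$, the lower bound transfers the Cauchy property to $\{u_n\} \subset U$, so completeness of $U$ together with Lipschitz continuity of $A$ identifies the limit as an element of $R(A)$. To upgrade this to $R(A) = V'$, I would argue by contradiction: supposing $f \in V' \setminus R(A)$, I would combine closedness of $R(A)$ with the coercivity of $u \mapsto \norm{A(u) - f}_{V'}$ afforded by~\eqref{eq:BNB_1} to produce a best-approximating element $u^\ast$ with $h := A(u^\ast) - f \neq 0$; reflexivity of $V$ would then yield a unit vector $v \in V$ norming $h$, and condition~\eqref{eq:BNB_2} would supply a direction along which $\norm{A(u) - f}_{V'}$ decreases, contradicting optimality of $u^\ast$. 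For the necessity direction, the Hahn-Banach theorem combined with surjectivity of $A$ directly delivers~\eqref{eq:BNB_2}: for each $v \in V \setminus \{0\}$, pick $\tilde{f} \in V'$ with $\langle \tilde{f}, v \rangle > 0$ and note that the unique solution $w$ of $A(w) = \tilde{f}$ satisfies $a(w; v) > 0$. Condition~\eqref{eq:BNB_1} is precisely the Lipschitz continuity of $A^{-1}$, expressed as the a priori estimate~\eqref{eq:BNB_apriori}, and would be extracted directly from the bijectivity of $A$ together with the Lipschitz structure of $A$ on the range.

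The main obstacle I anticipate is the surjectivity step. Nonlinearity of $A$ prevents a direct closed-range/Hahn-Banach argument of the kind used in the linear Banach-Ne\v{c}as-Babu\v{s}ka theorem: the range $R(A)$ is in general not convex, so separation theorems do not apply in their usual form. The reflexivity of $V$ appears essential in order to realize the norming functional that converts~\eqref{eq:BNB_2} into a genuine descent direction for the residual, and this descent step must be executed carefully so as to remain compatible with the Lipschitz structure~\eqref{eq:BNB_continuity} of $A$ and with the absence of reflexivity of $U$.
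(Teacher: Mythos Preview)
Your overall structure---defining $A \colon U \to V'$, deducing injectivity and closedness of the range from~\eqref{eq:BNB_1}, then arguing surjectivity---matches the paper's. So do your treatments of the a~priori estimate and of the necessity of~\eqref{eq:BNB_2}. The real divergence is in how surjectivity is completed after closedness. The paper does not attempt a best-approximation or descent argument: once $\mathrm{Im}(A)$ is shown to be closed, it invokes the Hahn--Banach theorem (together with reflexivity of $V$, to identify $V'' \cong V$) to produce, under the hypothesis $\mathrm{Im}(A) \subsetneq V'$, a nonzero $v_0 \in V$ with $\langle A(w), v_0 \rangle = a(w; v_0) = 0$ for all $w \in U$, which contradicts~\eqref{eq:BNB_2} directly. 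Your remark that ``separation theorems do not apply in their usual form'' to the non-convex set $\mathrm{Im}(A)$ is well taken---the paper's step is precisely the linear closed-range/annihilator argument, transplanted without modification.

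Your proposed alternative, however, has two concrete gaps. First, the minimizer $u^\ast$ need not exist: condition~\eqref{eq:BNB_1} makes minimizing sequences for $u \mapsto \norm{A(u) - f}_{V'}$ bounded in $U$, but $U$ is not assumed reflexive, so you cannot extract a weakly convergent subsequence, and even if you could, the functional is not obviously weakly lower semicontinuous. Second, granting a minimizer $u^\ast$ with $h = A(u^\ast) - f \neq 0$ normed by some unit $v \in V$, condition~\eqref{eq:BNB_2} furnishes only \emph{some} $w$ with $a(w; v) > 0$; it gives no information about any path emanating from $u^\ast$, so it is unclear how to turn it into a descent direction for $\norm{A(\cdot) - f}_{V'}$. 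Separately, your necessity argument for~\eqref{eq:BNB_1} does not go through: bijectivity of $A$ together with Lipschitz continuity of $A$ does \emph{not} force Lipschitz continuity of $A^{-1}$ (take $U = V = \mathbb{R}$ and $a(w; v) = (w + \sin w)\,v$, which is $2$-Lipschitz in $w$ and bijective, yet $A^{-1}$ fails to be Lipschitz near $A(\pi)$). The paper, for its part, argues only that the supremum in~\eqref{eq:BNB_1} is strictly positive for each fixed pair $w_1 \neq w_2$; neither argument secures a uniform lower bound $M_a > 0$.
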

\begin{proof}
The proof proceeds in a similar manner as for the linear setting; cf., for example, \cite{Schwab1998}. For any fixed~$w \in U$, consider the linear functional $\phi_w \colon V \to \mathbb{R}$ of the form $v \mapsto \phi_w(v) := a(w; v)$ for all $v \in V$. By virtue of~\eqref{eq:BNB_continuity} with~$w_1 = w$ and~$w_2 = 0$, we have that
\begin{equation*}
\norm{\phi_w}_{V'} = \sup_{v \in V \setminus \{0\}} \frac{\abs{\phi_w(v)}}{\norm{v}_V} = \sup_{v \in V \setminus \{0\}} \frac{\abs{a(w; v)}}{\norm{v}_V} \leq C_a \norm{w}_U .
\end{equation*}
Hence, $\phi_w \in V'$. Now, let $A \colon U \to V'$ such that $w \mapsto A(w) := \phi_w$ for all~$w \in U$. The variational problem \eqref{eq:BNB_problem} is then equivalent to finding $u \in U$ such that~$A(u) = f$ in~$V'$. The existence and uniqueness of a solution $u \in U$ is ensured if the operator $A \colon U \to V'$ is injective and surjective.

Injectivity of $A$ is established by verifying that $A(w_1) = A(w_2)$ implies $w_1 = w_2$. By virtue of~\eqref{eq:BNB_1}, we have that, for all~$w_1, w_2 \in U$,
\begin{align}
\norm{A(w_1) - A(w_2)}_{V'} \, 
= \ 
& \sup_{v \in V \setminus \{0\}} \frac{\langle A(w_1) - A(w_2) , v \rangle_{V', V}}{\norm{v}_V} \nonumber
\\ = \
& \sup_{v \in V \setminus \{0\}} \frac{a(w_1; v) - a(w_2; v)}{\norm{v}_V} \nonumber
\\ \geq \
& M_a \norm{w_1 - w_2}_U . \label{eq:BNB_3}
\end{align}
Consequently, $\norm{A(w_1) - A(w_2)}_{V'} = 0$ implies~$\norm{w_1 - w_2}_U = 0$. Hence, $A$ is injective.

Surjectivity of $A$ is established by verifying that the range of $A$, hereafter denoted by $\mathrm{Im}(A)$, coincides with $V'$. This is equivalent to showing that $\mathrm{Im}(A)$ is closed in $V'$, and that its ortogonal complement in $V'$ is empty. To this end, let $\{w_n\}_{n=0}^\infty$ be some sequence in $U$ such that $\{ A(w_n) \}_{n=0}^\infty$ is a Cauchy sequence in $V'$. Then, from~\eqref{eq:BNB_3}, it follows that~$\{w_n\}_{n=0}^\infty$ is Cauchy in $U$. Let $w$ be its limit. On account of~\eqref{eq:BNB_continuity}, we have that $A(w_n) \to A(w)$ as~$n \to \infty$; indeed, for~$n \to \infty$,
\begin{align*}
\norm{A(w) - A(w_n)}_{V'} \,
= \
& \sup_{v \in V \setminus \{0\}} \frac{\langle A(w) - A(w_n) , v\rangle_{V', V}}{\norm{v}_V}
\\ = \
& \sup_{v \in V \setminus \{0\}} \frac{a(w; v) - a(w_n; v)}{\norm{v}_V} \nonumber
\\ \leq \
& C_a \norm{w - w_n}_U \to 0 .
\end{align*}
This in turn implies that~$A(w) \in \mathrm{Im}(A)$ and, thus, that~$\mathrm{Im}(A)$ is closed. It remains to show that the orthogonal complement of $A$ in $V'$ is empty. Let us argue by contradiction by supposing that $\mathrm{Im}(A) \subsetneq V'$. Then, by the Hahn-Banach theorem in the form of~\cite[Proposition 3]{Zeidler1995}, there exists a~$v_0 \in V''$ such that~$\langle A(w) , v_0 \rangle_{V', V''} = 0$ for every~$w \in U$. Since $V$ is reflexive, we can identify $V''$ with $V$ so that $v_0 \in V$. Accordingly, we have
\begin{align*}
0 = \langle A(w) , v_0 \rangle_{V', V} = a(w; v_0) \quad \forall w \in U ,
\end{align*}
which is in contradiction to~\eqref{eq:BNB_2}. This implies that $V' \setminus \mathrm{Im}(A) = \emptyset$ and, therefore, $\mathrm{Im}(A) \equiv V'$. Hence, $A$ is surjective.

Based on the above, we conclude that~\eqref{eq:BNB_problem} has a unique solution~$u \in U$ for every~$f \in V'$ whenever~\eqref{eq:BNB_1} and~\eqref{eq:BNB_2} hold. The \emph{a~priori} estimate~\eqref{eq:BNB_apriori} readily follows by noting that, from~\eqref{eq:BNB_1} with~$w_1 = u$ and~$w_2 = \tilde{u}$,
\begin{align*}
M_a \norm{u - \tilde{u}}_U \leq \sup_{v \in V \setminus \{0\}} \frac{a(u; v) - a(\tilde{u}; v)}{\norm{v}_V} = \sup_{v \in V \setminus \{0\}} \frac{\langle f - g, v \rangle_{V', V}}{\norm{v}_V} = \norm{f - g}_{V'} .
\end{align*}

It remains to prove that~\eqref{eq:BNB_1} and~\eqref{eq:BNB_2} are also necessary conditions for ensuring well-posedness of~\eqref{eq:BNB_problem}. The necessity of~\eqref{eq:BNB_1} follows from uniqueness. Indeed, assume that there exists a pair $u_1, u_2 \in U$, $u_1 \neq u_2$, such that
\begin{align*}
\sup_{v \in V \setminus \{0\}} \frac{a(u_1; v) - a(u_2; v)}{\norm{v}_V} = 0 .
\end{align*}
This would imply that~$a(u_1; v) = a(u_2; v)$ for every~$v \in V$, which is in contradiction to uniqueness. The necessity of~\eqref{eq:BNB_2} follows from existence. To see this, assume that there exists some $v_0 \in V \setminus \{0\}$ such that $a(w; v_0) = 0$ for every $w \in U$. By the Hahn-Banach theorem, there exists an $\tilde{f} \in V'$ such that~$\tilde{f}(v_0) \neq 0$, implying
\begin{align*}
0 = a(w; v_0) = \tilde{f}(v_0) \neq 0 ,
\end{align*}
which is a contradiction to the solvability of~\eqref{eq:BNB_problem}. This concludes the proof.
\end{proof}

The second result that we present provides equivalent $\inf$-$\sup$ conditions. It constitutes a nonlinear extension of~\cite[Propositon~A.2]{MelenkSchwab1997}. The result is not essential for the material presented in this paper, but is included nevertheless because it could be of independent interest. 
\begin{theorem} \label{thm:equivalent_infsup}
Let $a \colon U \times V \to \mathbb{R}$ be a semilinear form, such that
\begin{equation}
a(w_1; v) - a(w_2; v) \leq C_a \, \norm{w_1 - w_2}_U \ \norm{v}_V \quad \forall w_1. w_2 \in U, \ \forall v \in V \label{eq:primalBNB_continuity}
\end{equation}
for some constant $C_a > 0$. Moreover, let the map $w \mapsto a(w; \cdot)$ be everywhere Fr\'{e}chet differentiable in $U$, and denote by $a'(q; w, \cdot)$ the corresponding Fr\'{e}chet derivative at~$q \in U$ in the direction $w \in U$. The following statements are equivalent, with identical constant $M_a > 0$.
\begin{enumerate}
\item[(i)] It holds that:
\begin{equation}
\exists M_a > 0 : \quad \inf_{\substack{w_1, w_2 \in U \\ w_1 \neq w_2}} \ \sup_{v \in V \setminus \{0\}} \ \frac{a(w_1; v) - a(w_2; v)}{\norm{w_1 - w_2}_U \norm{v}_V} \geq M_a \, , \label{eq:primalBNB_1}
\end{equation}
\begin{equation}
\sup_{w \in U} a(w; v) > 0 \qquad \forall v \in V \setminus \{0\} \, . \label{eq:primalBNB_2}
\end{equation}
\item[(ii)] For all $q \in U$, it holds that:
\begin{equation}
\inf_{v \in V \setminus \{0\}} \sup_{w \in U \setminus \{0\}} \frac{a'(q; w, v)}{\norm{w}_U \, \norm{v}_V} \geq M_a , \label{eq:dualBNB_1}
\end{equation}
\begin{equation}
\sup_{v \in V} \ a'(q; w, v)> 0 \qquad \forall w \in U \setminus \{0\} \, . \label{eq:dualBNB_2}
\end{equation}
\end{enumerate}
\end{theorem}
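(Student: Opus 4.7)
Both implications are organised around the linear Fréchet derivative $B_q \in \mathcal{L}(U, V')$ defined by $\langle B_q w, v \rangle_{V', V} = a'(q; w, v)$, where the nonlinear operator $A \colon U \to V'$ is given by $\langle A(w), v \rangle_{V', V} = a(w; v)$. The core algebraic fact (valid for any bounded linear $B \colon U \to V'$ with $V$ reflexive) is that $B$ is a bijection with $\norm{B^{-1}}_{V' \to U} \le 1/M_a$ if and only if both the primal inf--sup $\norm{B w}_{V'} \ge M_a \norm{w}_U$ and the dual inf--sup $\norm{B^T v}_{U'} \ge M_a \norm{v}_V$ hold, with the same constant; this follows from the closed range theorem and the identity $\norm{B^{-1}}_{V' \to U} = \norm{(B^T)^{-1}}_{U' \to V}$ under reflexivity of $V$. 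Consequently, condition (ii) is equivalent to: for every $q \in U$, $B_q$ is a bijection with $\norm{B_q^{-1}}_{V' \to U} \le 1/M_a$.

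\textbf{(i) $\Rightarrow$ (ii).} Applying~\eqref{eq:primalBNB_1} with $w_1 = q + t w$ and $w_2 = q$ produces $v_t \in V$, $\norm{v_t}_V = 1$, with $a(q + tw; v_t) - a(q; v_t) \ge (M_a - \varepsilon) t \norm{w}_U$; Fréchet differentiability then gives $t \, a'(q; w, v_t) + o(t) \ge (M_a - \varepsilon) t \norm{w}_U$, and taking $t \to 0^+$ and $\varepsilon \to 0$ yields $\norm{B_q w}_{V'} \ge M_a \norm{w}_U$, so $B_q$ is injective with closed range. For surjectivity I would invoke Theorem~\ref{thm:nonlinear_infsup} (applicable under (i)) to conclude that $A$ is a bijection with $A^{-1}$ Lipschitz of constant $1/M_a$, and then argue that $B_q^T$ is injective by contradiction: if $B_q^T v_0 = 0$ with $\norm{v_0}_V = 1$, pick $\eta \in V'$ with $\langle \eta, v_0 \rangle_{V', V} = 1$, solve $A(u_t) = A(q) + t \eta$ (whence $\norm{u_t - q}_U \le t / M_a$), and pair the Fréchet expansion $A(u_t) - A(q) = B_q (u_t - q) + o(\norm{u_t - q}_U)$ with $v_0$ to obtain $t = \langle u_t - q, B_q^T v_0 \rangle_{U, U'} + o(t) = o(t)$, a contradiction. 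Closed and dense range combine to give $B_q$ surjective, hence a bijection, and the algebraic fact above delivers~\eqref{eq:dualBNB_1} with the same constant $M_a$; \eqref{eq:dualBNB_2} is immediate from injectivity of $B_q$.

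\textbf{(ii) $\Rightarrow$ (i).} From~\eqref{eq:dualBNB_1} and~\eqref{eq:dualBNB_2}, together with the closed range theorem and reflexivity of $V$, each $B_q$ is a bijection with $\norm{B_q^{-1}}_{V' \to U} \le 1/M_a$ and hence $\norm{B_q w}_{V'} \ge M_a \norm{w}_U$ for every $w$. I would then invoke a Hadamard--Plastock-type global inverse function theorem: since $A$ has a uniformly invertible derivative, $A$ is a global $C^1$-diffeomorphism from $U$ onto $V'$ with Lipschitz inverse of constant at most $1/M_a$. Estimate~\eqref{eq:primalBNB_1} then follows from $\norm{w_1 - w_2}_U \le (1/M_a) \norm{A(w_1) - A(w_2)}_{V'}$, while~\eqref{eq:primalBNB_2} follows from surjectivity of $A$: for any $v \ne 0$, pick $\eta \in V'$ with $\langle \eta, v \rangle_{V', V} > 0$ and take $w = A^{-1}(\eta)$.

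\textbf{Main obstacle.} The nontrivial step is the globalisation in (ii) $\Rightarrow$ (i), where one passes from pointwise invertibility of each $B_q$ with a uniform bound to a global inverse for $A$. This relies on a Hadamard--Plastock-type theorem in the Banach setting, which requires some care since only $V$ (and hence $V'$) is assumed reflexive and since only Fréchet (as opposed to $C^1$) differentiability of $A$ is postulated. A subsidiary subtlety appears in (i) $\Rightarrow$ (ii): surjectivity of $B_q$ cannot be read off the primal inf--sup for $B_q$ alone and must be extracted from the \emph{nonlinear} bijectivity of $A$, as sketched above.
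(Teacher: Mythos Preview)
Your approach is genuinely different from the paper's in both directions, and the difference matters most for (ii)~$\Rightarrow$~(i).

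For (i)~$\Rightarrow$~(ii), the paper does not go through the closed range theorem. Instead, for each fixed $v \in V$ it selects (via Hahn--Banach) a norming functional $J_v \in V'$ with $J_v(v) = \norm{v}_V$ and $\norm{J_v}_{V'} = 1$, and invokes Theorem~\ref{thm:nonlinear_infsup} to solve the \emph{nonlinear} auxiliary problem $a(u_{q,v}; z) = a(q; z) + \norm{v}_V\, J_v(z)$ for all $z \in V$; the a~priori bound $\norm{u_{q,v} - q}_U \le \norm{v}_V / M_a$ is then fed directly into~\eqref{eq:dualBNB_1}. Your route --- first extracting $\norm{B_q w}_{V'} \ge M_a \norm{w}_U$ by a limiting argument, then establishing surjectivity of $B_q$ via injectivity of $B_q^T$ (using the nonlinear bijectivity of $A$), and finally invoking the duality $\norm{B_q^{-1}} = \norm{(B_q^T)^{-1}}$ --- is a sound alternative and arguably makes the role of reflexivity of $V$ more explicit.

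For (ii)~$\Rightarrow$~(i), however, your plan has a real gap, and it is precisely the one you flag: Hadamard--Plastock requires $A \in C^1$, whereas only Fr\'echet differentiability is assumed, so a global Lipschitz inverse for $A$ with the sharp constant $1/M_a$ is not available to you. The paper avoids global inversion altogether. For each $q$ and each fixed $w$ it solves the \emph{linear} problem $a'(q; y, v_q) = \norm{w}_U\, J_w(y)$ for all $y \in U$ (well-posed by the linear Banach--Ne\v{c}as--Babu\v{s}ka theorem under~\eqref{eq:dualBNB_1}--\eqref{eq:dualBNB_2}), obtains $\norm{v_q}_V \le \norm{w}_U / M_a$, and then passes from the derivative back to the difference $a(w_1; v) - a(w_2; v)$ by applying the \emph{scalar} mean-value theorem to $t \mapsto a(w_2 + t(w_1 - w_2); v)$. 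This is far more elementary than a global inverse function theorem and requires only Fr\'echet differentiability. You should replace the Hadamard--Plastock step with this mean-value construction.
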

\begin{proof}
We first prove that~(i) implies~(ii). For arbitrary fixed $v \in V$, let $J_v \in V'$ such that $\norm{J_v}_{V'} = 1$ and $J_v(v) = \norm{v}_V$. The existence of such a linear functional $J_v \in V'$ follows by application of the Hahn-Banach theorem; see, for example, \cite[p. 5--6]{Zeidler1995}. Then, for any $q \in U$, let $u_{q, v} \in U$ be the solution of
\begin{equation*}
a(u_{q, v}; z) = a(q; z) + \norm{v}_V J_v(z) \qquad \forall z \in V .
\end{equation*}
% Existence and uniqueness of $u_{q, v} \in U$ are asserted by Theorem~\ref{thm:nonlinear_infsup} and the premises~\eqref{eq:primalBNB_continuity}--\eqref{eq:primalBNB_2}. 
By Theorem~\ref{thm:nonlinear_infsup} and the premises~\eqref{eq:primalBNB_continuity}--\eqref{eq:primalBNB_2}, we have that the solution $u_{q, v} \in U$ exists and is unique. 
Using~\eqref{eq:primalBNB_1} and recalling that $\norm{J_v}_{V'} = 1$, we deduce the following estimate:
\begin{equation*}
\norm{u_{q, v} - q}_U 
\leq \frac{1}{M_a} \sup_{z \in V \setminus \{0\}} \frac{a(u_{q, v}; z) - a(q; z)}{\norm{z}_V}
= \frac{1}{M_a} \sup_{z \in V \setminus \{0\}} \frac{\norm{v} J_v(z)}{\norm{z}_V}
= \frac{1}{M_a} \norm{v}_V .
\end{equation*}
Hence, we have that
\begin{equation*}
\inf_{v \in V \setminus \{0\}} \frac{a(u_{q, v}; v) - a(q; v)}{\norm{u_{q, v} - q}_U \norm{v}_V}
= \inf_{v \in V \setminus \{0\}} \frac{J_v(v)}{\norm{u_{q, v} - q}_U}
= \inf_{v \in V \setminus \{0\}} \frac{\norm{v}_V}{\norm{u_{q, v} - q}_U}  
\geq M_a .
\end{equation*}
Noting that
\begin{align*}
\inf_{v \in V \setminus \{0\}} \ \sup_{w \in U \setminus \{0\}} \frac{a'(q; w, v)}{\norm{w}_U \norm{v}_V}
\geq \ &
\inf_{v \in V \setminus \{0\}} \ \sup_{w \in U \setminus \{0\}} \ \inf_{t > 0} \frac{a(q + t w; v) - a(q; v)}{t \norm{w}_U \norm{v}_V} 
\\ \geq \ &
\inf_{v \in V \setminus \{0\}} \frac{a(u_{q, v}; v) - a(q; v)}{\norm{u_{q, v} - q}_U \norm{v}_V} ,
\end{align*}
we then obtain~\eqref{eq:dualBNB_1}. To show~\eqref{eq:dualBNB_2}, let
\begin{equation*}
v_0 = \underset{v \in V \setminus \{0\}}{\arg \sup} \left( \inf_{\substack{w_1, w_2 \in U \\ w_1 \neq w_2}} \frac{a(w_1; v) - a(w_2; v)}{\norm{w}_U} \right) .
\end{equation*}
By~\eqref{eq:primalBNB_1}, we have that, for any~$w \in U \setminus \{0\}$,
\begin{equation*}
a'(q; w, v_0) \geq \inf_{t > 0} \frac{a(q + t w; v_0) - a(q; v_0)}{t} \geq \inf_{t > 0} \frac{M_a \norm{t w}_U \norm{v_0}_V}{t} = M_a \norm{w}_U \norm{v_0}_V ,
\end{equation*}
yielding
\begin{equation*}
\sup_{v \in V} a'(q; w, v) 
\geq a'(q; w, v_0) 
\geq M_a \norm{w}_U \norm{v_0}_V > 0 \qquad \forall w \in U \setminus \{0\} .
\end{equation*}
Hence, we have proved that (i) implies (ii).

To prove the reverse implication, consider an arbitrary fixed~$w \in U$, and let $J_w \in U'$ such that $\norm{J_w}_{U'} = 1$ and $J_w(w) = \norm{w}_U$; cf. again \cite[p. 5--6]{Zeidler1995}. Then, given any $q \in U$, let $v_q \in V$ be the solution of
\begin{equation}
a'(q; y, v_q) = \norm{w}_U J_w(y) \qquad \forall y \in U . \label{eq:aux_problem_2}
\end{equation}
By \eqref{eq:primalBNB_continuity}, we have that
\begin{equation*}
a'(q; w, v) \leq C_a \norm{w}_U \norm{v}_V \qquad \forall q, w \in U , \ \forall v \in V .
\end{equation*}
In view of this and the premises~\eqref{eq:dualBNB_1}--\eqref{eq:dualBNB_2}, existence and uniqueness of the solution $v_q \in V$ to the problem~\eqref{eq:aux_problem_2} are asserted by the classical well-posedness result of Banach, Ne\v{c}as and Babu\v{s}ka; cf., for example, \cite[Theorem~1.1]{PietroErn2011}. The following \emph{a priori} estimate is derived:
\begin{equation*}
\norm{v_q}_V 
\leq \frac{1}{M_a} \sup_{y \in U \setminus \{0\}} \frac{a'(q; y, v_q)}{\norm{y}} 
= \frac{1}{M_a} \sup_{y \in U \setminus \{0\}} \frac{\norm{w}_U J_w(y)}{\norm{y}} 
= \frac{1}{M_a} \norm{w}_U .
\end{equation*}
Accordingly, we have that
\begin{equation*}
\inf_{w \in U \setminus\{0\}} \frac{a'(q; w, v_q)}{\norm{w}_U \norm{v_q}_V}
= \inf_{w \in U \setminus \{0\}} \frac{\norm{w}_U J_w(w)}{\norm{w}_U \norm{v_q}_V}
= \inf_{w \in U \setminus \{0\}} \frac{\norm{w}_U}{\norm{v_q}_V}
\geq M_a .
\end{equation*}
By virtue of the mean-value theorem, we then arrive at the inequality
\begin{equation*}
\inf_{\substack{w_1, w_2 \in U \\ w_1 \neq w_2}} \sup_{v \in V \setminus \{0\}} \ \frac{a(w_1; v) - a(w_2; v)}{\norm{w_1 - w_2}_U \norm{v}_V}
\geq \inf_{\substack{w_1, w_2 \in U \\ w_1 \neq w_2}} \sup_{v \in V \setminus \{0\}} \ \inf_{q \in U} \frac{a'(q; w_1 - w_2; v)}{\norm{w_1 - w_2}_U \norm{v}_V}
\geq M_a ,
\end{equation*}
which is~\eqref{eq:primalBNB_1}. Finally, to show~\eqref{eq:primalBNB_2}, let 
\begin{equation*}
w_0 = \underset{w \in U \setminus \{0\}}{\arg \sup} \left( \inf_{q \in U} \inf_{v \in V \setminus \{0\} }\frac{a'(q; w, v)}{\norm{w}_U} \right) .
\end{equation*}
By virtue of the mean-value theorem, we have that
\begin{equation*}
a(w_0; v) \geq \inf_{q \in U} a'(q; w_0, v) \geq M_a \norm{w_0}_U \norm{v}_V > 0 \qquad \forall v \in V \setminus \{0\}
\end{equation*}
This concludes the proof.
\end{proof}

%------------------------------------------------------------------------------

\bibliographystyle{abbrv}
\bibliography{bibliography}

%==============================================================================
\end{document}